\numberwithin{equation}{section}
\newtheorem{proposition}[equation]{Proposition}
\newtheorem{corollary}[equation]{Corollary}
\newtheorem{lemma}[equation]{Lemma}
\newtheorem{theorem}[equation]{Theorem}
\newtheorem*{theorem*}{Theorem}
\newtheorem*{corollary*}{Corollary}
\newtheorem*{proposition*}{Proposition}
\newtheorem*{lemma*}{Lemma}
\theoremstyle{definition}
\newtheorem{definition}[equation]{Definition}
\newtheorem{construction}[equation]{Construction}
\newtheorem*{definition*}{Definition}
\newtheorem*{construction*}{Construction}
\newtheorem{variant}[equation]{Variant}
\theoremstyle{remark}
\newtheorem{remark}[equation]{Remark}
\newtheorem{example}[equation]{Example}
\newtheorem{warning}[equation]{Warning}
\newcommand{\triv}{\mathrm{triv}}
\newcommand{\id}{\operatorname{id}}
\newcommand{\Z}{\mathbb{Z}}
\def\C{\mathbb C}
\let\scr=\mathcal
\let\bb=\mathbb
\def\A{\bb A}
\def\P{\bb P}
\def\DD{\bb D}
\newcommand{\1}{\mathbbm{1}}
\newcommand{\hh}{\mathbbm{h}}
\newcommand{\Ex}{ex}
\newcommand{\DM}{\mathcal{DM}}
\newcommand{\SH}{\mathcal{SH}}
\def\ph{\mathord-}
\DeclareMathOperator*{\colim}{colim}
\let\lim=\relax
\DeclareMathOperator*{\lim}{lim}
\def\Map{\mathrm{Map}}
\def\stab{\mathrm{stab}}
\def\Nis{\mathrm{Nis}}
\def\Zar{\mathrm{Zar}}
\def\Stk{\mathrm{Stk}}
\def\NAlg{\mathrm{NAlg}}
\def\Sect{\mathrm{Sect}}
\def\PSh{\mathcal{P}}
\def\Shv{\mathcal{S}\mathrm{hv}}
\def\Span{\mathrm{Span}}
\def\Cat{\mathcal{C}\mathrm{at}{}}
\def\Spc{\mathcal{S}\mathrm{pc}{}}
\def\Fun{\mathrm{Fun}}
\def\triv{\mathrm{triv}}
\newcommand{\wequi}{\simeq}
\newcommand{\Mod}{\mathcal{M}od}
\def\adj{\rightleftarrows}
\DeclareRobustCommand{\ul}{\underline}
\newcommand{\tr}{{tr}}
\def\op{\mathrm{op}}
\let\cat=\mathrm
\def\Sm{{\cat{S}\mathrm{m}}}
\def\Aff{{\cat{A}\mathrm{ff}}}
\def\SmAff{{\cat{S}\mathrm{mAff}}}
\def\Sch{\cat{S}\mathrm{ch}{}}
\def\SmQP{\mathrm{SmQP}{}}
\def\FEt{\mathrm{FEt}{}}
\def\fet{\mathrm{f\acute et}}
\def\all{\mathrm{all}}
\def\et{\mathrm{\acute et}}
\def\pt{*}
\def\MGL{\mathrm{MGL}}
\def\mot{\mathrm{mot}}
\def\naive{\mathrm{naive}}
\def\EE{\mathbb{E}}
\def\D{\mathrm{D}}
\def\H{\mathrm{H}}
\def\BB{\mathbb{B}}
\def\NSym{\mathrm{NSym}}
\newcommand{\sslash}{\mathbin{/\mkern-6mu/}}
\newcommand{\NB}[1]{}
\newcommand{\tombubble}[1]{}
\newcommand{\tom}[1]{}
\newcommand{\elden}[1]{}
\newcommand{\eldenbubble}[1]{}
\newcommand{\jeremiah}[1]{}
\newcommand{\sjeremiah}[1]{}
\renewcommand{\todo}[1]{}
\newcommand{\NB}[1]{\todo[color=gray!40]{#1}}
\newcommand{\tombubble}[1]{\todo[color=green!40]{#1}}
\newcommand{\tom}[1]{{\color{green!60!black}#1}}
\newcommand{\elden}[1]{{\color{blue!60!black}#1}}
\newcommand{\eldenbubble}[1]{\todo[color=blue!40]{#1}}
\newcommand{\jeremiah}[1]{{\color{red!60!black}#1}}
\newcommand{\sjeremiah}[1]{{\todo[color=red!40]{#1}}}
\author{Tom Bachmann}
\address{Mathematisches Institut, LMU Munich, Munich, Germany}
\email{tom.bachmann@zoho.com}
\author{Elden Elmanto}
\address{Department of Mathematics, Harvard University, Cambridge, MA, USA}
\email{eldenelmanto@gmail.com}
\author{Jeremiah Heller}
\address{Department of Mathematics, University of Illinois, Urbana-Champaign, IL, USA}
\email{jbheller@illinois.edu}
\date{\today}
\newenvironment{subappendices}{\appendix}{}
\newcommand{\myexternaldocument}[2][] {{
\let\nl\newlabel
\def\nlxx##1##2##3##4##5##6{\nl{##1}{{#1##2}{}{}{}{}}}
\renewcommand\newlabel[2]{\IfBeginWith{##1}{tocindent}{}{\nlxx{##1}##2}}
\externaldocument{#2}
}}
\title{Motivic colimits and extended powers}
\begin{document}
\maketitle

\begin{abstract}
We define a notion of colimit for diagrams in a motivic category indexed by a presheaf of spaces (e.g. an étale classifying space), and we study basic properties of this construction. As a case study,  we construct the motivic analogs of the classical extended and generalized powers, which refine the categorical versions of these constructions as special cases. We also offer more computationally tractable models of these constructions using equivariant motivic homotopy theory. This is the first in a series of papers on power operations in motivic stable homotopy theory.
\end{abstract}

\tableofcontents


\newcommand{\qproj}{\mathrm{qproj}}

\section{Introduction}
\subsection{Overview}
This is the first in a series of papers which introduce and study power operations in the context of motivic stable homotopy theory.
In this paper, we introduce a new construction which puts the theory on solid footing: motivic colimits. We demonstrate how this theory works by constructing a motivic version of extended and generalized powers.
This construction can be seen as generalizing certain ideas on \emph{motivic Thom spectra} by the first author and Hoyois \cite[\S16]{norms}, or as a motivic analog of \emph{equivariant colimits} in the sense of Shah \cite{shah-colimits}. It can also be interpreted as an instance of Lurie's extensive theory of \emph{relative colimits} \cite[\S4.3.1]{HTT}.

To describe this construction, let us fix a base scheme $S$. We will work with the category $\Sm_S$ of smooth $S$-schemes, which serve as the building blocks for motivic homotopy theory  over $S$. Roughly speaking the formalism of motivic colimits takes as input a \emph{motivic diagram}, which is a morphism of presheaves of $\infty$-categories on smooth $S$-schemes
\[
f: \mathcal{X} \rightarrow \SH(\ph),
\]
where $\mathcal{X}$ is a presheaf of $\infty$-groupoids. The output, the motivic colimit of $f$, is a motivic spectrum over the base scheme
\[
\colim_{\mathcal{X}} f \in \SH(S). 
\]

We give a flavor of the theory via some examples.

\begin{example} \label{ex:1} Suppose that $X$ is a smooth $S$-scheme with structure map $p_X: X \rightarrow S$. Via the Yoneda embedding, we can regard $X$ as a discrete presheaf on $\Sm_S$ which we denote by $h_X$. A motivic spectrum $E \in \SH(X)$ is classified by a morphism $E: h_X \rightarrow \SH(\ph)$, which is an example of a motivic diagram as above. The motivic colimit is then given by
\[
\colim_{h_X} E \simeq p_{X\sharp}E.
\] 
Here $p_{X\sharp}$ is the left adjoint to $p_X^*:\SH(S) \rightarrow \SH(X)$. In particular, if $h_X \rightarrow \SH$ classifies the relative sphere spectrum $\1_X \in \SH(X)$ then the motivic colimit is the suspension spectrum $\Sigma^{\infty}_+X \in \SH(S)$.
For another example, if $X = S \amalg S$ (so that $p$ is the fold map), then $E \in \SH(S \coprod S) \wequi \SH(S) \times \SH(S)$ really corresponds to two spectra $E_1, E_2 \in \SH(S)$, and $\colim_{h_X} E = E_1 \amalg E_2$ recovers an ordinary colimit.
\end{example}

\begin{example} \label{ex:2} Let $K$ be the presheaf of spaces sending $X$ to the algebraic $K$-theory space $K(X)$. As in \cite[\S16.2]{norms}, we have a canonical morphism of presheaves
\[
J:K \rightarrow \SH(\ph) \qquad \xi \mapsto S^{\xi}.
\]
Here $S^{\xi}$ is the Thom spectrum of the virtual vector bundle $\xi$.
In particular if $\xi$ was the class of an actual vector bundle $\mathcal{E}$, then
\[
S^{\xi} \simeq \Sigma^{\infty}\frac{\mathcal{E}}{\mathcal{E} \setminus 0}. 
\]
This morphism was christened the motivic $J$-homomorphism in \cite{norms}. Out of this we can extract two motivic colimits determining familiar objects:
\begin{enumerate}
\item precomposing with the inclusion of the rank zero part of $K$-theory $e:K^{\circ} \rightarrow K$, we get a motivic diagram
\[
J \circ e: K^{\circ} \rightarrow \SH(\ph),
\]
whose motivic colimit gives us Voevodsky's algebraic cobordism spectrum \cite[Theorem 16.13]{norms}
\[
\MGL_S \in \SH(S).
\]
\item Simply taking the motivic colimit of the $J$ gives us the periodized version of algebraic cobordism \cite[Remark 16.14]{norms}
\[
\bigvee_{j \in \Z} \Sigma^{2j,j}\MGL_S \in \SH(S). 
\]
\end{enumerate}
\end{example}

\begin{example} \label{ex:3} Let $\FEt$ denote the presheaf that sends an $S$-scheme $X$ to the groupoid of finite \'etale schemes over $X$ and fix $E \in \SH(S)$. We can construct a motivic diagram
\[
N_E: \FEt \rightarrow \SH(\ph),
\]
which, on a smooth $S$-scheme $Y$ with structure map $p_Y:Y \rightarrow S$, sends a finite \'etale $Y$-scheme $f:T \rightarrow Y$ to the motivic spectrum 
\[
f_{\otimes}(f\circ p_Y)^*E \in \SH(Y). 
\]
The resulting motivic colimit is the free normed spectrum on $E$ as introduced in \cite[Section 16.4]{norms}. 
\end{example}

\subsection{Why motivic colimits?} In \cite{norms}, the first author and Hoyois introduced the formalism of motivic Thom spectra in \cite[\S16]{norms} in order to construct normed structures on various cobordism spectra that appear in motivic homotopy theory; see \cite[Theorem 16.19, Example 16.22]{norms}. Such a structure packages an enormous amount of coherence data that would have been extremely difficult to write down by hand. Presenting these cobordism spectra as motivic Thom spectra allow the authors to pin-point the exact source of the coherence datum for the norm structure on these cobordism spectra, namely the functoriality of the motivic $J$-homomorphism; see \cite[Proposition 16.17]{norms} for a precise statement.

The formalism of motivic colimits is a generalization of the formalism of motivic Thom spectra. From the point-of-view of power operations in motivic homotopy theory, the formalism of motivic colimits allows us to produce a robust theory of motivic extended and generalized powers which, just as their classical counterparts, govern the power operations in highly structured ring spectra; see \cite[Chapter I]{hinfty} for an introduction to how this works in the classical setting.

Classically, if $E$ is a spectrum, then its $n$-th extended power is given by the formula
\[
D_n(E):=\colim_{B\Sigma_n} E^{\wedge n};
\]
 see \cite[Construction 2.2.1]{DAGXIII} for a reference in the language of this paper. Power operations on a highly structured ring spectrum $E$ are then governed by maps 
 \[
 D_n(E) \rightarrow E
 \] and their interactions as $n$ varies. Analogously, our construction of power operations in motivic homotopy theory requires the construction of such maps. 
 
 While the above construction makes sense in any presentably symmetric monoidal $\infty$-category, such as $\SH(S)$ (as explained in \emph{loc. cit.}), these \emph{categorical extended powers} will not produce ``genuinely motivic" power operations which are characterized by their ``weight-shifting" property. Indeed, one of the main theorems in the sequel to this paper is a motivic analog of Steinberger's deep result \cite[Chapter III, Theorems 2.2-2.3]{hinfty} which presents the dual Steenrod algebra as monogenically generated over the Dyer--Lashof algebra. One can build a kind of ``naive" motivic Dyer--Lashof algebra out of the categorical extended powers, but it will not be enough to generate the motivic Steenrod algebra. The fix, as is done in this paper, is to replace the colimit, taken over the groupoid $B\Sigma_n$, with a \emph{motivic colimit} over the étale classifying space of $\Sigma_n$ \cite[Section 4]{A1-homotopy-theory}, i.e. the classifying stack of $\Sigma_n$-torsors.

%
%

\subsection{Motivic colimits in practice}

In this paper, we will offer two demonstrations of motivic colimits in practice. First, we formulate a notion of \emph{motivic Kan extensions}. This will be used to prove a motivic analog of the following statement: if $\scr C$ is a semiadditive $\infty$-category, then each object $c \in \scr C$ admits a canonical map transfer map
\[
c \rightarrow c \oplus \cdots \oplus c,
\]
such that its composite with the fold map is given by mutiplication by $n$. This construction itself will be useful in our investigation of power operations. 

Second, we construct motivic extended and generalized powers.  This is an elaboration of Example~\ref{ex:3} which we briefly explain. For a base scheme $S$, we denote by $\FEt$ the classifying stack of finite \'etale morphisms thought of as a presheaf on $\Sm_S$. We will functorially pair the following two objects
\begin{enumerate}
\item a presheaf on smooth $S$-schemes $\scr X$ equipped with a map $\alpha: \scr X \rightarrow \FEt$, and
\item a motivic spectrum $E \in \SH(S)$,
\end{enumerate}
to construct a new motivic spectrum 
\[
D_\scr{X}^\mot(E) \in \SH(S),
\] 
which deserves to be called the \emph{motivic generalized power}. For example if $\scr X = B_{\et}\Sigma_n$ then $D_{\scr X}^{\mot}E$ deserves to be called the $n$-th \emph{motivic extended power}, which we denote by
\[
D_n^{\mot}(E).
\] On the other hand, if we let $\scr X = B\Sigma_n$ be the constant presheaf then this construction yields the the $n$-th categorical extended powers. The flexibility of choosing the map $\scr X \rightarrow B_{\et}\Sigma_n$, such as the map $B_{\et}H \rightarrow B_{\et}\Sigma_n$ induced by a subgroup inclusion $H \subset \Sigma_n$, will turn out to be useful, especially in identifying the various relations that occur within the motivic Dyer--Lashof algebra.

\subsection{Equivariant motivic homotopy theory} The ``genuine" nature of our construction of motivic extended powers is most easily explained via equivariant motivic homotopy theory. Two pieces of this latter theory play prominent roles. 

First, recall from \cite[Section 4.2]{A1-homotopy-theory} that via the construction of Morel--Voevodsky and Totaro, one can associate to a group scheme $G$ a motivic homotopy type $\BB G$ which approximates the \'etale classifying stack of $G$. It is via this construction that Voevodsky was able to construct power operations \cite{voevodsky2003reduced} which exhaust the motivic Steenrod algebra. The motivic-equivariant nature of this construction is explained in \cite[Section 3.1]{gepner-heller}. Let $\SH^{G}(S)$ be the stable $\infty$-category of motivic $G$-spectra (in the sense of \cite{Hoyois:6functor}; see also \cite[Section 2]{gepner-heller} and Appendix \ref{app:equiv-mot}). Then we have the motivic homotopy orbit \cite[Definition 7.3]{gepner-heller} functor which is defined on the subcategory of $\SH^{G}(S)$ generated by those schemes with free $G$-action (i.e. trivial isotropy groups)
\[
(\ph)/G:\SH^{G}(S)[\scr F_\triv] \rightarrow \SH(S).
\]
There is a motivic $G$-space $\EE G$ whose points classify schemes with free $G$-action \cite[Definition 3.2]{gepner-heller} and its suspension spectrum is an object of $\SH^G(S)[\scr F_\triv]$, while $\Sigma^\infty_+ \EE G/G$ recovers the suspension spectrum of $\BB G$. Smashing with $\EE G$ yields a colocalization  $\SH^{G}(S) \rightarrow \SH^{G}(S)[\scr F_\triv]$ and we define the composite
\[
(\ph)_{\hh G}:\SH^{G}(S) \rightarrow \SH^{G}(S)[\scr F_\triv] \rightarrow \SH(S),
\]
the \emph{geometric homotopy orbits functor}; see~\ref{subsec:quotients} for details.

Secondly, as we will show in this paper, the endofunctor that sends a motivic spectrum $E$ to its $n$-th power, $E^{\wedge n}$, promotes to a functor landing in the $\Sigma_n$-equivariant stable motivic category
\[
\SH(S) \rightarrow \SH^{\Sigma_n}(S) \qquad E \mapsto E^{\wedge \underline{n}}.
\]
This is a motivic analog of the \emph{norm construction} of Hill, Hopkins and Ravenel \cite{hhr} for the map $\ast \rightarrow BG$ induced by the inclusion of the identity element $\{e\} \subset G$.

With this, we obtain the following equivariant model for the motivic extended powers:

\begin{theorem}[Corollary~\ref{cor:d-vs-d}] Let $S$ be a scheme. There is a canonical equivalence:
\[
D_n^{\mot}(E) \simeq (E^{\wedge \underline{n}})_{\hh\Sigma_n}
\]
\end{theorem}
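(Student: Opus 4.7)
The strategy is to realize both sides as motivic colimits of matching diagrams on $B_{\et}\Sigma_n$. Unwinding the construction described in the introduction, $D_n^{\mot}(E)$ is by definition the motivic colimit of the diagram obtained by restricting $N_E \colon \FEt \to \SH(\ph)$ from Example~\ref{ex:3} along the inclusion $B_{\et}\Sigma_n \hookrightarrow \FEt$ of the classifying stack of $\Sigma_n$-torsors (equivalently, of degree-$n$ finite étale covers). Explicitly, this diagram sends a $\Sigma_n$-torsor $f \colon T \to Y$ to $f_\otimes f^* p_Y^* E \in \SH(Y)$, where $p_Y\colon Y \to S$ is the structure map.

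For the other side, I would invoke a general comparison between motivic colimits over $B_{\et} G$ and geometric homotopy orbits, which I expect the paper to establish in its main body. For any finite group scheme $G$ and any $F \in \SH^G(S)$, this asserts that the motivic colimit over $B_{\et} G$ of the diagram sending a $G$-torsor $P \to Y$ to the descended spectrum of $F|_P \in \SH^G(P) \wequi \SH(Y)$ is naturally equivalent to $F_{\hh G}$. The geometric content is that the Morel--Voevodsky--Totaro approximation $\BB G$ presents the universal free $G$-torsor as a filtered colimit of quotients $U_i/G$ by free $G$-actions, each classified by a canonical map $U_i/G \to B_{\et} G$; the motivic colimit formalism then expresses the colimit over $B_{\et} G$ as the pushforward to $S$ of $\EE G \otimes F$, which by definition is $F_{\hh G}$.

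Specializing to $G = \Sigma_n$ and $F = E^{\wedge \ul{n}}$, the corollary reduces to matching the two diagrams on $B_{\et}\Sigma_n$, i.e., to checking that for a $\Sigma_n$-torsor $f \colon T \to Y$ with associated degree-$n$ étale cover, the equivariant descent of $E^{\wedge \ul{n}}$ to $\SH(Y)$ agrees with $f_\otimes f^* p_Y^* E$. This is the motivic analog of the classical identification, for free $G$-actions, of the Hill--Hopkins--Ravenel-style norm with the pushforward along the quotient; étale-locally, where $T \to Y$ trivializes to $\coprod_n Y \to Y$, both sides reduce to $E^{\wedge n}$, and the task is to show this holds coherently.

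The main obstacle is precisely ensuring that this last identification is coherent as the torsor varies, that is, that the two constructions agree not just pointwise but as motivic diagrams on $B_{\et}\Sigma_n$. Most likely the paper sets up $E \mapsto E^{\wedge \ul{n}}$ via a universal property phrased in terms of norms along finite étale covers, so that the compatibility holds essentially by construction and reduces the corollary to assembling the pieces. The general colimit-vs-homotopy-orbits comparison is the real content and is presumably the theorem immediately preceding the corollary in the paper.
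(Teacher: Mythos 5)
Your proposal is correct and follows essentially the same route as the paper: the paper first identifies the two fundamental diagrams via the natural equivalence $p_\otimes p^* \wequi R_+ \wedge_{\Sigma_n} (\ph)^{\wedge \ul{n}}$ (proved by the universal property of $\SH(S)$ plus étale descent, exactly as you anticipated), and then the ``general comparison'' you invoke is supplied by Lemma~\ref{lemm:EE-model}, which exhibits $\EE G$ as the colimit of the free $G$-schemes indexed by the category of elements of $B_\et G$, so that the motivic colimit of the second diagram becomes $\EE G_+ \wedge_G (\ph)^{\wedge \ul n} = (\ph)^{\wedge\ul n}_{\hh G}$. The only minor imprecision is that $\EE G$ is modelled as the colimit of the torsors $R$ themselves (not of the quotients $R/G$, which would instead approximate $\BB G$).
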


An application of this comparison comes in the form of Theorem~\ref{thm:ur-cartan-reln}, which establishes an ``Ur-Cartan" relation which we will eventually use to establish the all-important Cartan relation for the motivic dual Steenrod algebra in the sequel. This last result requires knowing that $D_n^{\mot}$ is op-lax monoidal which is easily seen for the equivariant model but poses an intimidating coherence problem for $D_n^{\mot}$ itself.

\subsection{Organization}
In \S\ref{sec:motivic-colim-first-props} we first define the motivic colimit functors, for diagrams valued in general presheaves of categories.
Then we show that they are in an appropriate sense left adjoint to ``constant diagram'' functors (Proposition \ref{prop:M-right-adjoint}), just like ordinary colimits.
As applications of this reinterpretation we study the compatibility of motivic colimits with localizing the category of indexing spaces $\PSh(\Sm_S)$ and we give an explicit formula for motivic colimits valued in presheaves.

In \S\ref{sec:motivic-colimits-and-exchange-trafos} we show that motivic colimits are compatible with the basic operations $f^*$, $f_\sharp$, $f_\otimes$ and also with changing the category.

In \S\ref{sec:motivic-kan-extensions} we develop a fragment of a theory of motivic Kan extensions.
Our main application is the construction of certain transfer maps between motivic colimits, provided that $\scr C$ satisfies ambidexterity for finite étale morphisms (Corollary \ref{cor:mot-colimits-transfers}).

In \S\ref{sec:motivic-extended-powers} we define the motivic extended powers as motivic colimits, and study their properties from this point of view.

In \S\ref{sec:equivariant-model} we show that motivic extended powers can be expressed in terms of genuine homotopy orbits, and use this to deduce some further properties.

In Appendix \ref{app:small-and-large-presheaves} we collect technical preliminaries about $\infty$-categories of presheaves on locally small categories.
These are surely well-known, but we could not locate references.

In Appendix \ref{subapp:kan-extension} we show that certain stacks, when viewed as presheaves on all affine schemes, are left Kan extended from smooth affine schemes.
These are some minor additions to \cite[Appendix A]{EHKSY3}, which we use to prove that motivic extended powers are stable under base change.

Finally in Appendix \ref{app:equiv-mot} we recall the construction and some basic properties of motivic equivariant homotopy theory.

\subsection{Notation and conventions}
We often have to deal with size issues, arising from the need to consider presheaves on categories which are not small.
Our conventions and some results for dealing with this issue are set out in Appendix \ref{sec:small-large-presheaves}.
In summary, by a large $\infty$-category one without restriction on the size of either the space of objects or spaces of morphisms.
By an $\infty$-category without further qualifications we mean a locally small one.
We write $\Spc$ for the $\infty$-category of small spaces and $\widehat{\Spc}$ for the $\infty$-category of large spaces, $\Cat_\infty$ for the $\infty$-category of small $\infty$-categories and $\widehat{\Cat}_\infty$ for the $\infty$-category of locally small $\infty$-categories.
Given a locally small $\infty$-category $\scr C$, we denote by $\PSh(\scr C)$ the full subcategory of $\Fun(\scr C^\op, \Spc)$ spanned by small colimits of representable functors. We write $\widehat{\PSh}(\scr C) 
= \Fun(\scr C^\op, \widehat{\Spc})$ for the category of large space valued presheaves. 

By ``cocontinuous'' or ``cocomplete'' without qualification we refer to properties with respect to small colimits.

We only use categorical language invariant under equivalences; in particular by ``small'' we mean ``essentially small'', and so on.

\subsection{Acknowledgements} Parts of this paper, and the power operations project, was written while the second author was a postdoc at the Center for Symmetry and Deformation at the University of Copenhagen, supported by the Danish National Research Foundation through the Centre for Symmetry and Deformation (DNRF92), which also funded a visit by the first author. We would also like to thank Tomer Schlank and Markus Spitzweck for useful discussions.

\section{Motivic colimits: definition and first properties} \label{sec:motivic-colim-first-props}
In this section, we generalize the \emph{``motivic Thom spectrum functor''} of \cite[Definition 16.1]{norms} by introducing the notion of \emph{motivic colimits}.
We place ourselves in the following context: we have a functor 
\begin{equation} \label{eq:c}
\scr C: \Sm_S^\op \to \widehat{\Cat}_\infty
\end{equation} subject to the following assumptions:
\begin{itemize}
\item If $p_X: X \to * \in \Sm_S$ is the unique map, then $\scr C(p_X): \scr C(S) \to \scr C(X)$ has a left adjoint $p_{X\sharp}$.
\item The category $\scr C(S)$ is cocomplete.
\end{itemize}
For $f: X \to Y \in \Sm_S$ we put $f^* := C(f): \scr C(Y) \to \scr C(X)$, when no confusion can arise.

\begin{remark} The situation above is the obvious $\infty$-categorical analog of an $\Sm$-fibered category in the sense of \cite[Section 1.1]{cisinski-deglise}; see \cite{ayoub} for another formulation. We note that this notion is also discussed in the language of this paper in the thesis of Khan \cite{adeelthesis}. Examples are aplenty: the unstable motivic homotopy $\infty$-category, its $S^1$- or $\P^1$-stable variants as well as various categories of modules over motivic ring spectra, and Voevodsky's version of the category of motives based on finite correspondences. We refer the reader to \cite{cisinski-deglise} for an encyclopedic reference.
\end{remark}

\subsection{Construction of the motivic colimit functors}
We let $(\Sm_S)_{\sslash \scr C}$ denote the source of the cartesian fibration $(\Sm_S)_{\sslash \scr C} \rightarrow \Sm_S$ classified by $\scr C$, i.e., $(\Sm_S)_{\sslash \scr C}$ is the category of elements of the functor $\scr C$. The $\infty$-category $(\Sm_S)_{\sslash \scr C}$ can be informally described in the following way:
\begin{itemize}
\item The objects are pairs $(X, E_X)$ with $X \in \Sm_S$ and $E_X \in \scr C(S)$.
\item The $1$-morphisms $(X, E_X) \rightarrow (Y, E_Y)$ consists of a morphism of smooth $S$-schemes $f: X \rightarrow Y$ and a morphism $\phi: E_X \rightarrow \scr C(f)(E_Y)$ in $\scr C(Y)$.
\end{itemize}
To begin our discussions of motivic colimits, we construct a functor \[M_0: (\Sm_S)_{\sslash \scr C} \to \scr C(S)\] which is informally described as follows:  it sends an object $(X, E) \in (\Sm_S)_{\sslash \scr C}$ to $p_{X\sharp} E \in \scr C(S)$. 

\begin{construction} \label{m0-construct}
By the Grothendieck construction, the fiber of $(\Sm_S)_{\sslash \scr C} \to \Sm_S$ over $S$ identifies with $\scr C(S)$, yielding an inclusion \[ M_0^R: \scr C(S) \to (\Sm_S)_{\sslash \scr C}. \]
By the assumptions on $\scr C$, this functor admits a left adjoint $M_0$ with the claimed description.
\end{construction}
Now, since $\scr C(S)$ is cocomplete, the functor $M_0$ extends to a cocontinuous functor \[M: \PSh((\Sm_S)_{\sslash \scr C}) \to \scr C(S);\] see Definition \ref{def:P-large-convention} and Lemma \ref{lemm:P-large}(2). The category $\PSh((\Sm_S)_{\sslash \scr C})$ is a bit unwieldy, so we will usually restrict $M$ to a smaller categories. By Lemma \ref{lemm:presheaf-slice-cat}, the slice category $(\Sm_S)_{/\scr C} \to \Sm_S$ is a cartesian fibration classifying the functor 
\[
\Sm_S^\op \to \widehat{\Cat}_\infty \qquad X \mapsto \scr C(X)^\wequi.
\] Composing with the co-unit of the adjunction $\widehat{\Spc} \adj \widehat{\Cat}_\infty$, we obtain a natural transformation $\scr C^\wequi \Rightarrow \scr C$, whence a canonical functor $(\Sm_S)_{/\scr C^\wequi} \to (\Sm_S)_{\sslash \scr C}$. Passing to presheaves, this induces $\PSh((\Sm_S)_{/\scr C^{\wequi}}) \to \PSh((\Sm_S)_{\sslash \scr C})$. The essential image of this functor is denoted by $\PSh(\Sm_S)_{\sslash \scr C}$. As the notation suggests, this can be identified with a right-lax slice category \cite[Appendix 5.1]{GRderalg}. Also note that $\PSh((\Sm_S)_{/\scr C}) \wequi \PSh(\Sm_S)_{/\scr C}$, by Lemma \ref{lemm:psh-slice-comp}.


\begin{definition}
We call the composites
\begin{align*}
\PSh((\Sm_S)_{\sslash \scr C}) &\xrightarrow{M} \scr C(S) \\
\PSh(\Sm_S)_{\sslash \scr C} \hookrightarrow \PSh((\Sm_S)_{\sslash \scr C}) &\xrightarrow{M} \scr C(S) \\
\PSh(\Sm_S)_{/\scr C^\wequi} \wequi \PSh((\Sm_S)_{/ \scr C^\wequi}) \to \PSh((\Sm_S)_{\sslash \scr C}) &\xrightarrow{M} \scr C(S),
\end{align*}
the \emph{motivic colimit functors} for $\scr C: \Sm_S^\op \to \widehat{\Cat}_\infty$. Depending on context, they are all denoted $M$, $M_S$ or $M^\scr{C}$.
\end{definition}

\begin{example}[Constant presheaves] \label{ex:constant-presheaves}
We have the terminal geometric morphism, whose inverse image is the constant functor and direct image is global sections \[ c: \Spc \rightleftarrows \PSh(\Sm_S): \Gamma. \]
Hence, if $\scr X \in \Spc$, a map $\alpha: c\scr X \rightarrow \scr C^\wequi$  in  $\widehat\PSh(\Sm_S)$ is equivalent to a functor 
\[
\bar\alpha: \scr X \rightarrow \scr C(S),
\]
i.e., a $\scr X$-shaped diagram in the category $\scr C(S)$. Since $M$ is cocontinuous and $M(* \xrightarrow{E} \scr C) = E$ one finds \[ M(\alpha) \wequi \colim_{\scr X} \bar\alpha. \] Hence in this case the motivic colimit functor simply computes the colimit of the functor $\bar\alpha$.
\end{example}

\begin{remark} \label{rmk:compute-motivic-colim-explicit}
Let $(\scr X \xrightarrow{\alpha} \scr C) \in \PSh(\Sm_S)_{/\scr C}$.
Then we have \[ (\scr X \xrightarrow{\alpha} \scr C) = \colim_{(x, X) \in (\Sm_S)_{ \sslash \scr X}} (X \xrightarrow{\alpha(x)} \scr C) \] and consequently \[ M(\scr X \xrightarrow{\alpha} \scr C) \wequi \colim_{(x, X) \in (\Sm_S)_{ \sslash \scr X}} (X \to S)_\sharp \alpha(x). \]
This generalizes the formula from Example \ref{ex:constant-presheaves}.
\end{remark}

\begin{remark}
Motivic colimits are special cases of \emph{relative colimits} \cite[\S4.3.1]{HTT}; see \cite[Remark 16.4]{norms}.
\end{remark}

\subsection{The right adjoint} In this section, we describe the motivic colimit functor as a partial left adjoint. Recall that if $F: \scr C \rightarrow \scr D$ is a functor, then its \emph{partial left adjoint} is defined on the full subcategory of $\scr D$ whose objects are $d$ such that the functor 
$\Map(d, F(-)): \scr C \rightarrow \Spc$ is representable \cite[Lemma 5.2.4.1]{HTT}.

\begin{example}
Given an $\infty$-category $\scr C$, the colimit functor $\colim: \Spc_{/\scr C} \rightarrow \scr C$ is a partial left adjoint to the functor which sends $c \in \scr C$ to the constant diagram at $c$.
\end{example}

\begin{proposition} \label{prop:M-right-adjoint}
The motivic colimit functor $M: \PSh(\Sm_S)_{/\scr C} \to \scr C(S)$ is a partial left adjoint of the functor
\[ \scr C(S) \to \widehat{\PSh}(\Sm_S)_{/\scr C}, \qquad E \mapsto \scr F_E, \]
where $\scr F_E \in \widehat{\PSh}(\Sm_S)_{/\scr C}$ is the presheaf with $\scr F_E(X) = \left( \scr C(X)_{/p_X^* E} \right)^\wequi$ and the source map to $\scr C(X)^\wequi$.
\end{proposition}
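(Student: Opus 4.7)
The plan is to verify the mapping-space identity
\[
\Map_{\scr C(S)}(M(\alpha), E) \simeq \Map_{\widehat{\PSh}(\Sm_S)_{/\scr C}}(\alpha, \scr F_E)
\]
naturally in $\alpha$ and $E$, for every $(\scr X \xrightarrow{\alpha} \scr C^\wequi) \in \PSh(\Sm_S)_{/\scr C}$ and every $E \in \scr C(S)$. By the standard characterization of partial left adjoints (as recalled just before the statement), this exhibits $M$ as a partial left adjoint to $E \mapsto \scr F_E$.

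First I would unwind the left-hand side. By Remark \ref{rmk:compute-motivic-colim-explicit},
\[
M(\alpha) \simeq \colim_{(X,x) \in (\Sm_S)_{\sslash \scr X}} p_{X\sharp}\alpha(x),
\]
and so
\[
\Map_{\scr C(S)}(M(\alpha), E) \simeq \lim_{(X,x)} \Map_{\scr C(S)}(p_{X\sharp}\alpha(x), E) \simeq \lim_{(X,x)} \Map_{\scr C(X)}(\alpha(x), p_X^* E),
\]
the last equivalence by the adjunction $p_{X\sharp} \dashv p_X^*$.

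Next I would unwind the right-hand side. Since $\scr X \simeq \colim_{(X,x) \in (\Sm_S)_{\sslash \scr X}} h_X$ in $\PSh(\Sm_S)$ and this colimit persists in $\widehat{\PSh}(\Sm_S)$, the Yoneda lemma gives $\Map_{\widehat{\PSh}(\Sm_S)}(\scr X, \scr Y) \simeq \lim_{(X,x)} \scr Y(X)$ for any $\scr Y \in \widehat{\PSh}(\Sm_S)$. Taking fibers over the structure map to $\scr C^\wequi$ (which commutes with limits) yields
\[
\Map_{\widehat{\PSh}(\Sm_S)_{/\scr C}}(\alpha, \scr F_E) \simeq \lim_{(X,x)} \fib_{\alpha(x)}\bigl(\scr F_E(X) \to \scr C(X)^\wequi\bigr).
\]
The defining property of the slice $\infty$-category identifies the fiber of the source map $(\scr C(X)_{/p_X^* E})^\wequi \to \scr C(X)^\wequi$ over a point $c$ with $\Map_{\scr C(X)}(c, p_X^* E)$, matching the two displayed limits term-by-term.

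The main obstacle is bookkeeping rather than content: the category $\PSh(\Sm_S)_{/\scr C}$ is secretly the right-lax slice $\PSh(\Sm_S)_{\sslash \scr C}$, and $\scr F_E$ lives only in $\widehat{\PSh}(\Sm_S)$ because $\scr C(X)_{/p_X^* E}$ is typically large. Both subtleties should be handled by the framework of Appendix \ref{app:small-and-large-presheaves}, which guarantees that the Yoneda computation and the commutation of fibers with large limits are valid in this enlarged setting. Naturality of the resulting equivalence in $\alpha$ and $E$ is automatic from the construction, since each ingredient is manifestly functorial in both variables.
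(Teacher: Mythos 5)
Your argument is correct and is essentially the paper's proof in different packaging: where you verify the mapping-space equivalence $\Map_{\scr C(S)}(M(\alpha),E)\wequi\Map(\alpha,\scr F_E)$ directly by writing both sides as limits over $(\Sm_S)_{\sslash\scr X}$, the paper first invokes Lemma \ref{lemm:big-P-partial-adj} to see that the cocontinuous extension $M$ is a partial left adjoint to pullback along $M_1$, and then performs exactly your termwise identification on representables (the adjunction $p_{X\sharp}\dashv p_X^*$ together with the identification of the fiber of $(\scr C(X)_{/p_X^*E})^\wequi\to\scr C(X)^\wequi$ over $F$ with $\Map_{\scr C(X)}(F,p_X^*E)$). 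One small correction: $\PSh(\Sm_S)_{/\scr C}$ here denotes the ordinary slice over $\scr C^\wequi$, not the lax slice $\PSh(\Sm_S)_{\sslash\scr C}$ --- your computation in fact uses the ordinary-slice mapping-space formula, which is the right one, so nothing breaks.
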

\begin{proof}
Recall that $M$ is obtained as the composite \[ \PSh(\Sm_S)_{/\scr C} \wequi \PSh((\Sm_S)_{/\scr C}) \to \PSh((\Sm_S)_{\sslash \scr C}) \xrightarrow{M'} \scr C(S), \] where $M'$ is the cocontinuous extension of a certain functor $M_0: (\Sm_S)_{\sslash \scr C} \to \scr C(S)$. Consequently $M'$ is a partial left adjoint to $M_0^*$, by Lemma \ref{lemm:big-P-partial-adj}. The functor $\widehat{\PSh}((\Sm_S)_{/\scr C}) \to \widehat{\PSh}((\Sm_S)_{\sslash \scr C})$ itself is left adjoint to pullback. It follows that $M$ is indeed a partial left adjoint, namely to pullback along $M_1: (\Sm_S)_{/\scr C} \to (\Sm_S)_{\sslash \scr C} \xrightarrow{M_0} \scr C(S)$.
This is just a more formal description of the functor $E \mapsto \scr F_E$: given $(X, F) \in (\Sm_S)_{/\scr C}$ (in other words $X \in \Sm_S$ and $F \in \scr C(X)$), we have
\begin{align*}
 M_1^*(E)((X, F)) &\wequi \Map_{\scr C(S)}(p_{X\sharp} F, E) \\
                  &\wequi \Map_{\scr C(X)}(F, p_X^* E) \\
                  &\wequi (\scr C(X)_{/p_X^* E})^\wequi \times_{\scr C(X)^\wequi} \{F\} \\
                  &\wequi \Map_{\widehat{\PSh}(\Sm_S)_{/\scr C^\wequi}}((X, F), \scr F_E).
\end{align*}
\end{proof}

\subsection{Localizing the source}
Before stating our next result, we need some preparation.
Note that we have a continuous extension of $\scr C$ which takes the form \[ \widehat{\scr C}: \PSh(\Sm_S)^\op \to \widehat{\Cat}_\infty.  \]
We also have the source functor $U: \PSh(\Sm_S)_{/\scr C} \to \PSh(\Sm_S)$.
\begin{example} \label{ex:UFE-expression}
For $E \in \scr C(S)$ we have $U\scr F_E \in \widehat\PSh(\Sm_S)$ and the functor $\Map(\ph, U\scr F_E): \PSh(\Sm_S) \to \widehat\Spc$, given by the continuous extension of \[ \Sm_S \ni X \mapsto (\scr C(X)_{/p_X^*E})^\wequi. \]
Since passage to slice categories and maximal subgroupoids preserves limits\NB{$\scr D_{/x} = \Fun(\Delta^1, \scr D) \times_{\scr D} \{x\}$}, this continuous extension is given by \[ \Map(\scr X, U\scr F_E) \wequi (\widehat{\scr C}(\scr X)_{/p_\scr{X}^*E})^\wequi. \]
\end{example}

\begin{proposition} \label{corr:M-local-equiv}
Let $W$ be a set of morphisms in $\PSh(\Sm_S)$ and assume that (the continuous extension of) $\scr C$ is $W$-local.
\begin{enumerate}
\item The localization of $\PSh(\Sm_S)_{/\scr C}$ at (the strong saturation of) $U^{-1}(W)$ exists and is equivalent to $\PSh(\Sm_S)[W^{-1}]_{/\scr C}$.
\item The functor $M: \PSh(\Sm_S)_{/\scr C} \to \scr C(S)$ preserves $U^{-1}(W)$-local weak equivalences and hence factors through $\PSh(\Sm_S)_{/\scr C} \to \PSh(\Sm_S)[W^{-1}]_{/\scr C}$.
\end{enumerate}
\end{proposition}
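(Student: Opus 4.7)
The plan is to prove (1) and (2) separately. Part (1) proceeds by identifying the $W$-local objects in the slice and then constructing an explicit reflector; part (2) is then a direct consequence of the partial adjunction in Proposition \ref{prop:M-right-adjoint} together with the $W$-locality of $\scr C$ and the formula of Example \ref{ex:UFE-expression}.

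\emph{Part (1).} I would first show that an object $(\scr Y,\beta) \in \PSh(\Sm_S)_{/\scr C}$ is $U^{-1}(W)$-local if and only if $\scr Y$ is $W$-local. For any $(\scr X,\alpha)$ in the slice, $\Map_{/\scr C^\wequi}((\scr X,\alpha),(\scr Y,\beta))$ is the fiber of $\Map(\scr X,\scr Y) \to \Map(\scr X,\scr C^\wequi)$ over $\alpha$. For $w: \scr X \to \scr X'$ in $W$ and a factorization $\alpha=\alpha'\circ w$, the map $\Map(\scr X',\scr C^\wequi) \to \Map(\scr X,\scr C^\wequi)$ is already an equivalence by $W$-locality of $\scr C^\wequi$, so the induced map on fibers is an equivalence if and only if $\Map(\scr X',\scr Y) \to \Map(\scr X,\scr Y)$ is. This gives the characterization. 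Then I would construct the reflector explicitly: send $(\scr X,\alpha)$ to $(L\scr X,\bar\alpha)$, where $L: \PSh(\Sm_S) \to \PSh(\Sm_S)[W^{-1}]$ is the localization and $\bar\alpha$ is the factorization of $\alpha$ through the unit $\scr X \to L\scr X$, which exists uniquely up to contractible choice since $\scr C^\wequi$ is $W$-local. This functor is left adjoint to the full inclusion and so exhibits $\PSh(\Sm_S)[W^{-1}]_{/\scr C}$ as the localization at $U^{-1}(W)$.

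\emph{Part (2).} By the partial adjunction of Proposition \ref{prop:M-right-adjoint}, a morphism $f$ in $\PSh(\Sm_S)_{/\scr C}$ is inverted by $M$ if and only if $\Map_{/\scr C^\wequi}(f,\scr F_E)$ is an equivalence for every $E \in \scr C(S)$, so it suffices to show that each $\scr F_E$ is $U^{-1}(W)$-local in $\widehat{\PSh}(\Sm_S)_{/\scr C}$. By Example \ref{ex:UFE-expression}, the mapping space $\Map_{/\scr C^\wequi}((\scr Z,\gamma),\scr F_E)$ identifies with $\Map_{\widehat{\scr C}(\scr Z)}(\gamma, p_\scr{Z}^*E)$. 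For $w: \scr X \to \scr Y$ in $W$ with a compatible choice of structure maps $\alpha = \beta \circ w$, we have $p_\scr{X}^*E \simeq w^* p_\scr{Y}^*E$, and since $w^*: \widehat{\scr C}(\scr Y) \to \widehat{\scr C}(\scr X)$ is an equivalence by $W$-locality, the induced map on mapping spaces is an equivalence as well. The factoring statement then follows formally from (1).

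\emph{Main obstacle.} The main subtlety is in part (1): the slice $\PSh(\Sm_S)_{/\scr C}$ need not be presentable (since $\scr C^\wequi$ is generally a large presheaf), so existence of the localization is not automatic from general presentability arguments and must be obtained from the explicit reflector above. Once (1) is in hand, part (2) is essentially a routine computation with the adjunction and Example \ref{ex:UFE-expression}.
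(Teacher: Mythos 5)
Your proof is correct and follows essentially the same route as the paper: part (1) rests on the same characterization of $U^{-1}(W)$-local objects via the fiber sequence $\Map_{/\scr C^\wequi}((\scr X,\alpha),(\scr Y,\beta)) \to \Map(\scr X,\scr Y) \to \Map(\scr X,\scr C^\wequi)$, and part (2) reduces, via the partial adjunction of Proposition \ref{prop:M-right-adjoint}, to checking that each $\scr F_E$ is $W$-local using Example \ref{ex:UFE-expression} and $p_{\scr X}^*E \wequi w^* p_{\scr Y}^*E$. The one place you genuinely diverge is the existence of the localization in (1): the paper appeals to ``presentability reasons and the adjoint functor theorem,'' which is delicate precisely for the reason you identify ($\scr C^\wequi$ is a large presheaf, so the slice is not obviously presentable), whereas you build the reflector $(\scr X,\alpha)\mapsto(L\scr X,\bar\alpha)$ by hand, using $W$-locality of $\scr C^\wequi$ to lift the structure map. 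This is the more robust argument and is worth writing out; the only point to be slightly careful about is that exhibiting the unit maps $(\scr X,\alpha)\to(L\scr X,\bar\alpha)$ with the stated universal property (equivalences on mapping spaces into all local objects) is what one needs to conclude reflectivity, rather than constructing the reflector as a functor in one stroke — your argument does supply exactly this, so the gap is only expository.
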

\begin{proof}
(1)
The localization exists and coincides with the subcategory of $W^{-1}(U)$-local objects, by presentability reasons and the adjoint functor theorem.\todo{really?}
We shall show that $(G \to \scr C) \in \PSh(\Sm_S)_{/\scr C}$ is $W^{-1}(U)$-local if and only if $G$ is $W$-local.
Thus let $F_1 \to F_2 \in \PSh(\Sm_S)$ be a $W$-equivalence, $F_2 \to \scr C$ any map and consider the induced map $(F_1 \to \scr C) \to (F_2 \to \scr C) \in \PSh(\Sm_S)_{/\scr C}$.
Note that $\Map((F_i \to \scr C), (G \to \scr C))$ is the fiber of $\Map(F_i, G) \to \Map(F_i, \scr C)$.
Since $\scr C$ is $W$-local, the base is independent of $i$, and hence \[ \Map((F_1 \to \scr C), (G \to \scr C)) \wequi \Map((F_2 \to \scr C), (G \to \scr C)) \] for all choices of map $F_2 \to \scr C$ (i.e. base points) if and only if $\Map(F_1, G) \wequi \Map(F_2, G)$.
This was to be shown.

(2) Both statements are equivalent to $M^*(\scr C(S)) \subset \widehat{\PSh}(\Sm_S)[W^{-1}]_{/\scr C} \subset \widehat{\PSh}(\Sm_S)_{/\scr C}$. In other words, by Proposition \ref{prop:M-right-adjoint}, we need to check that for $E \in \scr C(S)$ the big presheaf $\scr F_E \in \widehat{\PSh}(\Sm_S)$ is $W$-local. That is, for $\alpha: \scr X \to \scr Y \in W$ we know that $\alpha^*: \scr C(\scr Y) \to \scr C(\scr X)$ is an equivalence, and by Example \ref{ex:UFE-expression} we need to prove that $\left(\scr C(\scr Y)_{/p_{\scr Y}^* E}\right)^\wequi \to \left(\scr C(\scr X)_{/p_{\scr X}^* E}\right)^\wequi$ is an equivalence. Since $p_{\scr X}^* E \wequi \alpha^* p_{\scr Y}^* E$, this is clear.
\end{proof}

\begin{example} \label{ex:M-P-Sigma}
Suppose that $\scr C: \Sm_S^\op \to \widehat{\Cat}_\infty$ preserves finite products. Then $M: \PSh(\Sm_S)_{/\scr C} \to \scr C(S)$ factors through $\PSh(\Sm_S)_{/\scr C} \to \PSh_\Sigma(\Sm_S)_{/\scr C}$. Indeed $\PSh_\Sigma(\Sm_S)$ is obtained from $\PSh(\Sm_S)$ by localizing at the set of map $\emptyset \to r_\emptyset$ and $r_X \coprod r_Y \to r_{X \coprod Y}$, where $r: \Sm_S \to \PSh(\Sm_S)$ denotes the yoneda functor, and these correspond in the continuous extension of $\scr C$ precisely to $\scr C(\emptyset) \to *$ and $\scr C(X \coprod Y) \to \scr C(X) \times \scr C(Y)$.
\end{example}

\begin{example} \label{ex:M-tau-local}
If $\tau$ is a topology on $\Sm_S$ and $\scr C$ is a $\tau$-sheaf, then $M$ factors through $\PSh(\Sm_S)_{/\scr C} \to \Shv_\tau(\Sm_S)_{/\scr C}$. A similar remark applies for motivic/$\A^1$-localization. However, in our examples of interest, $\scr C$ is not $\A^1$-invariant (for example, $\SH(X \times \A^1)$ is not equivalent to $\SH(X)$; \cite[Remark 16.10]{norms}) and thus the motivic colimit functor is \emph{not} $\A^1$-invariant in the diagram variable. 
\end{example}

\subsection{The case $\scr C(X) = \PSh(\Sm_X)$.}
\label{subsec:M-PSh}

We now investigate motivic colimits for the functor $\scr C(\ph) = \PSh(\Sm_{(\ph)})$, i.e., the functor 
\[
\scr C: \Sm^{\op}_S \rightarrow \widehat{\Cat}_\infty; X \mapsto \PSh(\Sm_X).
\]
In this case, we can give a formula for the motivic colimit functor; we do this in Corollary~\ref{corr:M-compute}.

Consider the composite functor \[ \gamma: \Sm_S \xrightarrow{c} (\Sm_S)_{\sslash \Sm_{(\ph)}} \hookrightarrow (\Sm_S)_{\sslash \PSh(\Sm_{(\ph)})}. \]
Here $c$ is the functor $X \mapsto (X \in \Sm_S, X \in \Sm_X)$, which we can write down by hand since $(\Sm_S)_{\sslash \Sm_{(\ph)}}$ is a $(2,1)$-category.\NB{or see explanation in commented out line below this}

\begin{proposition} \label{prop:M-corep}
The motivic colimit functor $M: \PSh((\Sm_S)_{\sslash \PSh(\Sm_{(\ph)})}) \to \PSh(\Sm_S)$ is right adjoint to the cocontinuous extension \[ \gamma: \PSh(\Sm_S) \to \PSh((\Sm_S)_{\sslash \PSh(\Sm_{(\ph)})}). \]
\end{proposition}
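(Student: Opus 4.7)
The plan is to identify $M$ with the restriction functor $\gamma^*: \PSh((\Sm_S)_{\sslash \PSh(\Sm_{(\ph)})}) \to \PSh(\Sm_S)$, which is manifestly right adjoint to the cocontinuous extension appearing in the statement: this is the standard adjoint triple $\gamma_! \dashv \gamma^* \dashv \gamma_*$ associated to any functor between (possibly large) categories, where $\gamma_!$ is the ``cocontinuous extension'' of $\gamma$ in the sense of Yoneda/left Kan extension. Size issues are handled by the foundations collected in Appendix \ref{app:small-and-large-presheaves}.

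Both $M$ and $\gamma^*$ are cocontinuous: $M$ by construction, and $\gamma^*$ because colimits in presheaf categories are computed pointwise, so restriction preserves them. Consequently both functors are determined by their restriction to representable objects, and it suffices to produce a natural equivalence on representables.

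The key computation is as follows. Fix a representable $(X, F) \in (\Sm_S)_{\sslash \PSh(\Sm_{(\ph)})}$ (with $X \in \Sm_S$ and $F \in \PSh(\Sm_X)$) and $Y \in \Sm_S$. Unpacking the right-lax slice, a morphism $\gamma(Y) = (Y, h_Y) \to (X, F)$ consists of a map $f: Y \to X$ in $\Sm_S$ together with a morphism $h_Y \to f^*F$ in $\PSh(\Sm_Y)$; by Yoneda the latter is an element of $(f^*F)(Y) = F(Y \xrightarrow{f} X)$. Hence
\[ \gamma^*(X, F)(Y) \;=\; \Map_{(\Sm_S)_{\sslash \PSh(\Sm_{(\ph)})}}(\gamma(Y), (X, F)) \;\simeq\; \coprod_{f: Y \to X} F(Y, f). \]
On the other hand $M(X, F) = p_{X\sharp} F$, and since here $p_{X\sharp}: \PSh(\Sm_X) \to \PSh(\Sm_S)$ is left Kan extension along the forgetful functor $\Sm_X \to \Sm_S$, the coend formula combined with the decomposition $\Map_{\Sm_S}(Y, T) \simeq \coprod_{f: Y \to X} \Map_{\Sm_X}((Y, f), T)$ yields the same expression
\[ (p_{X\sharp} F)(Y) \;\simeq\; \coprod_{f: Y \to X} F(Y, f). \]
Checking naturality in both $Y$ and $(X, F)$ gives the desired identification $M|_{\mathrm{reps}} \simeq \gamma^*|_{\mathrm{reps}}$, whence $M \simeq \gamma^*$ by cocontinuity.

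The main obstacle is administrative rather than conceptual: one must carefully unwind the morphisms in the right-lax slice category $(\Sm_S)_{\sslash \PSh(\Sm_{(\ph)})}$ so that the decomposition over $f: Y \to X$ is matched on the nose with the coend decomposition for $p_{X\sharp}$, and verify naturality in both variables. This is essentially the same bookkeeping already carried out in the proof of Proposition \ref{prop:M-right-adjoint}, and it may be convenient to deduce the statement directly from that proposition by noting that on representable $(X, F)$, the presheaf $\scr F_{M(X,F)}$ of Proposition \ref{prop:M-right-adjoint} corepresents $(X, F) \mapsto \gamma^*(X, F)$ via Yoneda.
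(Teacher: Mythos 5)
Your proposal is correct and is essentially the paper's own argument in different packaging: identifying $M$ with the restriction functor $\gamma^*$ amounts to verifying $\Map(\gamma\scr X, \scr G) \wequi \Map(\scr X, M\scr G)$ on representables, and your fibrewise comparison over the discrete set $\Map_{\Sm_S}(Y,X)$ (with $(p_{X\sharp}F)(Y)$ evaluated by the coend formula) is exactly the paper's comparison of the fibres of $P$ and $P'$. The only loose end you already flag yourself—upgrading the pointwise identification to a natural one—is handled in the paper by obtaining the comparison map from functoriality of $M$ together with $M\gamma \wequi \id$ and then checking it fibrewise.
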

\begin{proof}
The composite $M\gamma$ is a colimit preserving endofunctor of $\PSh(\Sm_S)$ and hence determined by its restriction to $\Sm_S$.
Since $M\gamma X = M(X \xrightarrow{X} \PSh(\Sm_X)) = X$ we find that $M\gamma \wequi \id$; we shall show that this equivalence is a counit of adjunction.
In other words for $\scr X \in \PSh(\Sm_S)$ and $\scr G \in \PSh((\Sm_S)_{\sslash \PSh(\Sm_{?})})$ we need to show that $\Map(\gamma \scr X, \scr G) \wequi \Map(\scr X, M \scr G)$.
We may assume that $\scr X = X \in \Sm_S$; thus $X, \gamma X$ are completely compact\footnote{Let us that $\scr C$ is a large $\infty$-category (using the convention of Appendix~\ref{app:small-and-large-presheaves}) then an object $X \in \scr C$ is \emph{completely compact} if $\Map(X,-):\scr C \rightarrow \widehat{\Spc}$ commutes with all small colimits.} and we may assume that $\scr G = (U, E)$ with $U \in \Sm_S$ and $E \in \PSh(\Sm_Y)$.
There is a canonical map $p_{U\sharp} E \to p_{U\sharp} * = U$ inducing $P': \Map_{\PSh(\Sm_S)}(X, p_{U\sharp} E) \to \Map_{\PSh(\Sm_S)}(X, U) \wequi \Map_{\Sm_S}(X, U).$
From this we obtain the following commutative diagram
\begin{equation*}
\begin{tikzcd}
\Map_{(\Sm_S)_{\sslash \PSh(\Sm_{(\ph)})}}((X, *), (U, E)) \ar[r,"M"] \ar[d, "P"] & \Map_{\PSh(\Sm_S)}(X, p_{U\sharp} E) \ar[dl, "P'"] \\
\Map_{\Sm_S}(X, U),
\end{tikzcd}
\end{equation*}
where $P$ is the tautological functor $(\Sm_S)_{\sslash \PSh(\Sm)} \to \Sm_S$. In order to prove that the top map is an equivalence, it suffices to prove that it induces an equivalence on the fiber over any point $f \in \Map_{\Sm_S}(X, U)$. The fiber of $P$ over $f$ is $\Map_{\PSh(\Sm_X)}(*, f^* E) \wequi \Map_{\PSh(\Sm_S)_{/X}}(X, X \times_{f,U} p_{U\sharp} E)$. Here we have used that $\Sm_X \to (\Sm_S)_{/X}$ is fully faithful, and hence so is its cocontinuous extension. This is the same as the fiber of $P'$ over $f$, essentially by definition. This concludes the proof.
\end{proof}

\begin{corollary} \label{corr:M-compute}
Let $\scr X \in \PSh(\Sm_S)$ and $\alpha: \scr X \to \PSh(\Sm_{(\ph)})$, in other words $\alpha \in \PSh(\Sm_S)_{/\PSh(\Sm_{(\ph)})}$. Then for $U \in \Sm_S$ we have
\[ M(\alpha)(U) \wequi \colim_{\scr X(U)} \alpha_U(\ph)(U). \]
Here $\alpha_U(\ph): \scr X(U) \to \PSh(\Sm_U)$ is the functor obtained from $\alpha$ by taking sections over $U$.
\end{corollary}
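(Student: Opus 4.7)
The plan is to prove the two sides of the claimed formula define cocontinuous functors of $\alpha \in \PSh(\Sm_S)_{/\PSh(\Sm_{(\ph)})^\wequi}$ and then to verify agreement on the generating class of representables, where both can be computed explicitly via the adjunction of Proposition~\ref{prop:M-corep}. Since $\PSh(\Sm_S)_{/\PSh(\Sm_{(\ph)})^\wequi} \wequi \PSh((\Sm_S)_{/\PSh(\Sm_{(\ph)})^\wequi})$ (Lemma~\ref{lemm:psh-slice-comp}), the relevant representables are the pairs $(V, E)$ with $V \in \Sm_S$ and $E \in \PSh(\Sm_V)$, corresponding to the natural transformation $\alpha \colon h_V \to \PSh(\Sm_{(\ph)})^\wequi$ sending $f \colon W \to V$ to $f^* E$.

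For the cocontinuity check, note that $M$ is a partial left adjoint by Proposition~\ref{prop:M-right-adjoint} and hence cocontinuous, and evaluation at $U$ preserves colimits because colimits in $\PSh(\Sm_S)$ are pointwise. For the right-hand side, a colimit $\alpha = \colim_i \alpha_i$ in the slice has underlying presheaf $\scr X = \colim_i \scr X_i$, so $\scr X(U) = \colim_i \scr X_i(U)$; commuting the two colimits then yields $\colim_{\scr X(U)} \alpha_U(\ph)(U) \wequi \colim_i \colim_{\scr X_i(U)} (\alpha_i)_U(\ph)(U)$.

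For the check on the representable $\alpha = (V, E)$, Proposition~\ref{prop:M-corep} and Yoneda give $M(\alpha)(U) \wequi \Map(h_U, M(\alpha)) \wequi \Map(\gamma h_U, \alpha)$. Since $\gamma h_U$ is the representable on $(U, h_U) \in (\Sm_S)_{\sslash \PSh(\Sm_{(\ph)})}$ (by construction of $\gamma$), and since left Kan extension carries representables to representables along $j \colon (\Sm_S)_{/\PSh(\Sm_{(\ph)})^\wequi} \to (\Sm_S)_{\sslash \PSh(\Sm_{(\ph)})}$, the representable $\alpha$ is sent to the representable on $(V, E)$ in $\PSh((\Sm_S)_{\sslash \PSh(\Sm_{(\ph)})})$. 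Therefore
\[ M(\alpha)(U) \wequi \Map_{(\Sm_S)_{\sslash \PSh(\Sm_{(\ph)})}}((U, h_U), (V, E)). \]
Unpacking the mapping space: a morphism consists of $f \colon U \to V$ in $\Sm_S$ together with a map $h_U \to f^* E$ in $\PSh(\Sm_U)$; by Yoneda, the latter is an element of $(f^* E)(\id_U) = E(U \xrightarrow{f} V)$. This identifies $M(\alpha)(U)$ with $\colim_{f \in \Map_S(U, V)} E(U \xrightarrow{f} V)$, which is precisely the right-hand side for this $\alpha$, since $\alpha_U(f) = f^* E$ and $f^* E(\id_U) = E(U \xrightarrow{f} V)$.

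The main subtlety I anticipate is the bookkeeping around the different slice constructions: keeping track of the composite $\PSh(\Sm_S)_{/\PSh(\Sm_{(\ph)})^\wequi} \wequi \PSh((\Sm_S)_{/\PSh(\Sm_{(\ph)})^\wequi}) \xrightarrow{j_!} \PSh((\Sm_S)_{\sslash \PSh(\Sm_{(\ph)})})$ used to define $M$ in version~(3), and verifying that a representable in the source really is sent to the representable on the same object in the target. Once this identification is in hand, everything else is a direct application of Yoneda and the explicit description of morphisms in the $\sslash$-category.
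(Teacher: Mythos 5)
Your proof is correct and follows essentially the same route as the paper's own (alternative) argument: both reduce by cocontinuity in $\alpha$ to the representable case $(V,E)$, then identify $M(\alpha)(U)$ via Proposition~\ref{prop:M-corep} with a mapping space in $(\Sm_S)_{\sslash \PSh(\Sm_{(\ph)})}$ fibered over $\Map_{\Sm_S}(U,V)$ with fibers $(f^*E)(\id_U)$, and finally invoke the fact that the total space of such a fibration over a groupoid computes the colimit of the classified diagram (the paper cites the dual of \cite[Corollary 3.3.3.3]{HTT} for this last step, which you use implicitly).
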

\begin{proof}
By Proposition \ref{prop:M-corep} we have
\[ M(\alpha)(U) \wequi \Map_{\PSh(\Sm_S)_{\sslash \PSh(\Sm_{(\ph)})}}((U, *), \alpha). \]
The result now follows from the formula for computing mapping spaces in lax slice categories\NB{ref; i.e. $\Map((a\xrightarrow{\alpha} x), (b \xrightarrow{\beta} x)) \wequi \colim_{f \in \Map(a,b)} Nat(f\beta, \alpha)$}. Alternatively we can argue as follows. Since $(U, *)$ is completely compact and the colimit functor $\Spc_{/\Spc} \to \Spc$ is cocontinuous (being the cocontinuous extension of a functor $*_{/\Spc} \to \Spc$), both sides are stable under colimits in the $\alpha$ variable. Since $\PSh(\Sm_S)_{/\PSh(\Sm_{(\ph)})} \to \PSh(\Sm_S)_{\sslash \PSh(\Sm_{(\ph)})}$ preserves colimits and the left hand side is generated under colimits by objects of the form $(X, E \in \PSh(\Sm_X))$, the problem reduces to $\alpha$ of this form. In this case we have seen in the proof of Proposition \ref{prop:M-corep} that there is a cartesian fibration in spaces $P: \Map_{(\Sm_S)_{\sslash \PSh(\Sm)}}((U, *), (X, E)) \to \Map_{\Sm_S}(U, X)$. This classifies the functor $\Map_{\Sm_S}(U, X) \to \Spc, f \mapsto \Map_{\PSh(\Sm_X)}(*, f^* E) = \alpha_U(f)(U)$. The source of a cartesian fibration over an $\infty$-groupoid is a model for the colimit of the associated functor by (the dual of) \cite[Corollary 3.3.3.3]{HTT}, so the result follows.
\end{proof}

\begin{example}
Let $\scr X \in \PSh(\Sm_S)$. We define $\scr X_* \in \PSh(\Sm_S)_{/\PSh(\Sm_{(\ph)})}$ as the composite $\scr X \to * \to \PSh(\Sm_{(\ph)})^\wequi$, where the second map picks out the object $* \in \PSh(\Sm_S)$. By Corollary \ref{corr:M-compute} we find
\[ M(\scr X_*)(U) \wequi \colim_{\scr X(U)} * \wequi \scr X(U), \]
whence $M(\scr X_*) \wequi \scr X$. (This is also easy to see directly from the definition of $M$.)
\end{example}

\section{Motivic colimits and exchange transformations} \label{sec:motivic-colimits-and-exchange-trafos}

%

\subsection{Changing $\scr C$}
Suppose given $\eta: \scr C \to \scr D \in \Fun(\Sm_S^\op, \widehat{\Cat}_\infty)$. Under suitable assumptions there are induced motivic colimit functors which we denote by $M_\scr{C}: \PSh((\Sm_S)_{\sslash \scr C}) \to \scr C(S)$ and $M_\scr{D}: \PSh((\Sm_S)_{\sslash \scr D}) \to \scr D(S)$. Passing to the associated cartesian fibrations, $\eta$ induces $\bar\eta: (\Sm_S)_{\sslash \scr C} \Rightarrow (\Sm_S)_{\sslash \scr D}$. We also denote by $\bar\eta: \PSh((\Sm_S)_{\sslash \scr C}) \to \PSh((\Sm_S)_{\sslash \scr D})$ the cocontinuous extension.

\begin{remark}
Since $(\Sm_S)_{/\scr C} \wequi (\Sm_S)_{\sslash \scr C^\wequi}$, the commutative diagram
\begin{equation*}
\begin{CD}
\scr C^\wequi @>\eta^\wequi>> \scr D^\wequi \\
@VVV                               @VVV     \\
\scr C @>\eta>> \scr D \\
\end{CD}
\end{equation*}
in $\Fun(\Sm_S^\op, \widehat{\Cat}_\infty)$ induces a commutative diagram
\begin{equation*}
\begin{CD}
(\Sm_S)_{/\scr C} @>\bar\eta>> (\Sm_S)_{/\scr D} \\
@VVV                               @VVV     \\
(\Sm_S)_{\sslash \scr C} @>\bar\eta>> (\Sm_S)_{\sslash \scr D}.
\end{CD}
\end{equation*}
\end{remark}

\begin{construction} \label{cons:ex-trans} 
Suppose given a square of $\infty$-categories
\begin{equation*}
\begin{CD}
\scr C_1 @>F_1>> \scr C_2  \\
@AG_1AA            @AG_2AA \\
\scr C_3 @>F_2>> \scr C_4,
\end{CD}
\end{equation*}
commuting up a specified equivalence $\alpha$. Suppose further that $G_1$ has a left adjoint $M_1$ and $G_2$ has a left adjoint $M_2$. Then there is a canonical exchange transformation (also usually called the Beck-Chevalley transformation; see also \cite[Definition 4.7.4.13]{HA}) $M_2 F_1 \Rightarrow F_2 M_1$, namely \[ M_2F_1 \stackrel{u}{\Rightarrow} M_2F_1G_1M_1 \stackrel{\alpha}{\simeq} M_2G_2F_2M_1 \stackrel{c}{\Rightarrow} F_2M_1. \]
\end{construction}

\begin{proposition} \label{prop:changing-C} Let $\scr C, \scr D: \Sm_S^\op \to \widehat\Cat_{\infty}$ satisfy the assumptions of~\eqref{eq:c}. Suppose that we have a morphism $\eta: \scr C \rightarrow \scr D$ such that the functor $\eta_S: \scr C(S) \rightarrow \scr D(S)$ preserves all small colimits. Then there are canonical exchange transformations:
\begin{equation} \label{ex-pt-wise}
\Ex_{\sharp\eta}: p_{X\sharp}^\scr{D} \eta_X \Rightarrow \eta_S p_{X\sharp}^\scr{C}: \scr C(X) \rightarrow \scr D(S),
\end{equation}
and
\begin{equation} \label{ex-mot-colim}
\Ex_{M\eta}: M^{\scr D} \bar\eta \Rightarrow \eta_S M^\scr{C}: \PSh((\Sm_S)_{\sslash \scr C}) \to \scr D(S).
\end{equation}
Moreover, if each $\Ex_{\sharp\eta}$ is an equivalence, then so is $\Ex_{M\eta}$.
\end{proposition}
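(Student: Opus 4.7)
The plan is to derive both exchange transformations from Construction \ref{cons:ex-trans}: the first directly, and the second at the level of the elements categories $(\Sm_S)_{\sslash (-)}$, subsequently passing to presheaves by the universal property of free cocompletion.

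For $\Ex_{\sharp\eta}$, naturality of $\eta: \scr C \to \scr D$ at the structure map $p_X: X \to S$ gives a commuting square
\[
\begin{CD}
\scr C(X) @>\eta_X>> \scr D(X) \\
@Ap_X^{\ast,\scr C}AA @AAp_X^{\ast,\scr D}A \\
\scr C(S) @>\eta_S>> \scr D(S),
\end{CD}
\]
whose vertical arrows admit the stipulated left adjoints; Construction \ref{cons:ex-trans} then produces $p_{X\sharp}^{\scr D}\eta_X \Rightarrow \eta_S p_{X\sharp}^{\scr C}$.

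For $\Ex_{M\eta}$, I would first perform the analogous construction one level up. The functor $\bar\eta: (\Sm_S)_{\sslash \scr C} \to (\Sm_S)_{\sslash \scr D}$ restricts on the fiber over $S$ to $\eta_S$ by the Grothendieck correspondence, so the square
\[
\begin{CD}
(\Sm_S)_{\sslash \scr C} @>\bar\eta>> (\Sm_S)_{\sslash \scr D} \\
@AM_0^{R,\scr C}AA @AAM_0^{R,\scr D}A \\
\scr C(S) @>\eta_S>> \scr D(S)
\end{CD}
\]
commutes; applying Construction \ref{cons:ex-trans} with the left adjoints $M_0^{\scr C}$ and $M_0^{\scr D}$ of Construction \ref{m0-construct} yields a natural transformation $M_0^{\scr D}\bar\eta \Rightarrow \eta_S M_0^{\scr C}$ of functors $(\Sm_S)_{\sslash \scr C} \to \scr D(S)$. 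Since $\eta_S$ preserves small colimits by hypothesis, both $M^{\scr D}\bar\eta$ and $\eta_S M^{\scr C}$ are cocontinuous functors out of $\PSh((\Sm_S)_{\sslash \scr C})$, so the universal property of $\PSh((\Sm_S)_{\sslash \scr C})$ as the free small-colimit cocompletion uniquely lifts the $M_0$-level natural transformation to the desired $\Ex_{M\eta}: M^{\scr D}\bar\eta \Rightarrow \eta_S M^{\scr C}$.

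For the final claim, the $M_0$-level exchange evaluated on a representable object $(X,E) \in (\Sm_S)_{\sslash \scr C}$ is, by inspection of Construction \ref{cons:ex-trans}, precisely $\Ex_{\sharp\eta}$ at $E \in \scr C(X)$. Hence if each $\Ex_{\sharp\eta}$ is an equivalence then $\Ex_{M\eta}$ is an equivalence on all representables; as its source and target are cocontinuous and representables generate $\PSh((\Sm_S)_{\sslash \scr C})$ under small colimits, $\Ex_{M\eta}$ is then an equivalence throughout. The main subtlety is bookkeeping: verifying that the presheaf-level $\Ex_{M\eta}$ really is the cocontinuous extension of its $M_0$-level predecessor, so that the evaluation identity above is valid. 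This is a routine coherence check using the uniqueness clause in the universal property of the free cocompletion applied to the two Construction \ref{cons:ex-trans} squares.
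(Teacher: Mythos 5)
Your proposal is correct and follows essentially the same route as the paper: both exchange transformations come from Construction \ref{cons:ex-trans} (the second via the commuting square involving $M_0^R$ and $\bar\eta$ supplied by functoriality of the Grothendieck construction), the presheaf-level transformation is the cocontinuous extension of the $M_0$-level one (using that $\eta_S$ preserves small colimits so that $\eta_S M^{\scr C}$ really is the extension of $\eta_S M_0^{\scr C}$), and the equivalence statement is checked on representables and propagated by cocontinuity. This matches the paper's argument in all essentials.
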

\begin{proof} The exchange transformation $\Ex_{\sharp\eta}$ of~\eqref{ex-pt-wise} comes directly from Construction \ref{cons:ex-trans}. We proceed to construct~\eqref{ex-mot-colim}.
Recall Construction \ref{m0-construct} of $M_0^R$.
By functoriality of the Grothendieck construction we have a commutative diagram
\begin{equation*}
\begin{CD}
\scr C(S) @>M_0^R>>   (\Sm_S)_{\sslash \scr C} \\
@V{\eta_S}VV      @V{\bar \eta}VV     \\
\scr D(S) @>M_0^R>>   (\Sm_S)_{\sslash \scr D} \\
\end{CD}
\end{equation*}
and hence Construction \ref{cons:ex-trans} supplies us with an exchange transformation
\[
\Ex_{0,M\eta}: M_0^{\scr D} \bar\eta \Rightarrow \eta_S M_0^\scr{C}: (\Sm_S)_{\sslash \scr C} \to \scr D(S).
\]
The transformation $\Ex_{0, M\eta}$ extends canonically to a transformation
\[
\Ex_{M\eta}: M^{\scr D} \bar\eta \Rightarrow \eta_S M^\scr{C}: \PSh((\Sm_S)_{\sslash \scr C}) \to \scr D(S)
\]
(using Lemma \ref{lemm:P-large}(2)); we use the fact that $\eta_S$ preserves small colimits to ensure that the extension of $(\eta_S M_0^{\scr{C}})$ to $ \PSh((\Sm_S)_{\sslash \scr C})$ agrees with the composite $\eta_S \circ M^{\scr{C}}$. The last statement follows since, by construction, if $(X, E_X) \in \PSh((\Sm_S)_{\sslash \scr C})$ is representable then $\Ex_{M\eta}(X, E_X) \simeq \Ex_{\sharp\eta}(E_X)$.

%
%
%
\end{proof}

\begin{example} \label{ex:changing-C}
We have a sequence of natural transformations in $\Fun(\Sm_S^\op, \widehat{\Cat}_\infty)$
\[ \PSh(\Sm_{(\ph)}) \Rightarrow \PSh_\Sigma(\Sm_{(\ph)}) \Rightarrow \Spc(\ph) \Rightarrow \Spc(\ph)_* \Rightarrow \SH(\ph) \Rightarrow \DM(\ph, \Z) \Rightarrow \DM(\ph, \Z/2), \]
where each transformation is the obvious one, and satisfies all the assumptions of Proposition \ref{prop:changing-C}. In other words, passage from any category in the list to one further to the right is compatible with motivic colimits (just like it is compatible with ordinary colimits).
\end{example}

\subsection{Changing $S$}
Suppose that $\scr C$ is defined on a larger category than $\Sm_S$, e.g. $\scr C: \Sch^\op \to \widehat{\Cat}_\infty$. For $S \in \Sch$ write $\scr C_S: \Sm_S \to \Sch \to \widehat{\Cat}_\infty$ for the restriction of $\scr C$ to $\Sm_S$. If $\scr C_S$ satisfies the hypotheses of~\eqref{eq:c}, there is an induced motivic colimit functor 
\[M_S: \PSh((\Sm_S)_{\slash \scr C_S}) \to \scr C(S).
\] 
for each $S \in \Sch$. In this section, we fix such as $\scr C$ study the variance of of $M_S$ as $S$ varies, i.e, if $f: T \to S \in \Sch$ then we compare $M_S$ and $M_T$. We first construct 
\[f^*: (\Sm_S)_{\sslash \scr C_S} \to (\Sm_T)_{\sslash \scr C_T}.
\] It can be informally described as $(X, E) \mapsto (X \times_S T, f^* E)$. 
\begin{construction} \label{f*-construct} We have a morphism of schemes $f: T \rightarrow S$.
The functor $f_\sharp: \Sch_T \to \Sch_S$ induces by functoriality of the Grothendieck construction (on the base, as in \cite[Corollary A.31]{ghn-lax}) a morphism $(\Sch_T)_{\sslash \scr C} \to (\Sch_S)_{\sslash \scr C}$, informally described as $(X \in \Sch_T, E \in \scr C(X)) \mapsto (X \in \Sch_S, E \in \scr C(X))$.
This admits a right adjoint $(\Sch_S)_{\sslash \scr C} \to (\Sch_T)_{\sslash \scr C}$ informally described as $(X, E) \mapsto (X \times_S T, f^* E)$.
The desired functor $f^*$ is obtained by restricting to the full subcategories $(\Sm_{(\ph)})_{\sslash \scr C_{(\ph)}} \subset (\Sch_{(\ph)})_{\sslash \scr C}$.
\end{construction}
\begin{proof}
In order to see that the right adjoint exists and determine its value, let $Y \in \Sch_S$ and $F \in \scr C(Y)$.
A morphism from $f_\sharp(X, E)$ to $(Y, F)$ consists of an $S$-morphism $p: X \to Y$ and a map $E \to p^*F$ in $\scr C(X)$.
This is the same data as a $T$-morphism $p': X \to Y \times_S T$ together with a map $E \to p'^* q^*F$, where $q$ is the projection $Y \times_S T \to Y$.
Consequently $f^*(Y,F)$ is represented by $(Y \times_S T, q^* F)$, as desired.
\end{proof}

%
\begin{variant}
In precisely the same way, we construct $f^*: (\Sm_S)_{/\scr C_S} \to (\Sm_T)_{/\scr C_T}$, which fits into a commutative diagram
\begin{equation*}
\begin{CD}
(\Sm_S)_{/\scr C_S} @>f^*>> (\Sm_T)_{/\scr C_T} \\
@VVV                          @VVV              \\
(\Sm_S)_{\sslash \scr C_S} @>f^*>> (\Sm_T)_{\sslash \scr C_T}.
\end{CD}
\end{equation*}
\end{variant}

\begin{proposition} \label{prop:changing-S} \NB{assumptions not quite minimal}
Suppose that we have a functor $\scr C: \Sch_S^\op \to \widehat{\Cat}_\infty$ such that
\begin{enumerate}
\item For each $T \in \Sch_S$ the category $\scr C(T)$ admits small colimits.
\item For each $f: T' \to T \in \Sch_S$ the functor $f^*:\scr C(T) \to \scr C(T')$ preserves small colimits.
\item If $f$ is smooth then $f^*$ has a left adjoint $f_\sharp$.
\end{enumerate}

Let $f: T' \to T \in \Sch_S$. Consider the diagram
\begin{equation*}
\begin{CD}
\PSh((\Sm_{T})_{\sslash \scr C_T}) @>M_T>> \scr C(T) \\
@Vf^*VV                                  @Vf^*VV   \\
\PSh((\Sm_{T'})_{\sslash \scr C_{T'}}) @>M_T'>> \scr C(T'),
\end{CD}
\end{equation*}
where the left vertical morphism $f^*$ is the cocontinuous extension of the functor 
\[f^*: (\Sm_S)_{\sslash \scr C_T} \to (\Sm_T)_{\sslash \scr C_{T'}}
\] defined in Construction~\ref{f*-construct}. Then,
\begin{enumerate}
\item there is a canonical exchange transformation 
\[
\Ex_M^*: M_{T'} f^* \Rightarrow f^* M_T : \PSh((\Sm_S)_{\sslash \scr C_S}) \to \scr C(T).
\]
\item If $\scr C$ satisfies \emph{smooth base change} over $T$, i.e. for any smooth morphism $g: X \to T$ the exchange transformation $ex^*_\sharp: g'_\sharp f'^* \to f^* g_\sharp: \scr C(X) \to \scr C(T')$ is an equivalence, then also $\Ex_M^*$ is an equivalence.
\end{enumerate}
\end{proposition}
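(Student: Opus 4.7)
The plan is to mimic the proof of Proposition \ref{prop:changing-C}, replacing the ``change of $\scr C$'' square by a ``change of base'' square.

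For part (1), I would first establish that the diagram
\[
\begin{CD}
\scr C(T) @>M_0^R>> (\Sm_T)_{\sslash \scr C_T} \\
@V f^* VV @V f^* VV \\
\scr C(T') @>M_0^R>> (\Sm_{T'})_{\sslash \scr C_{T'}}
\end{CD}
\]
commutes. Unwinding Construction \ref{f*-construct}, the right-hand $f^*$ is the right adjoint to pushforward along $\Sch_{T'} \to \Sch_T$ on the base of the Grothendieck construction, and it sends $(Y,F) \in (\Sm_T)_{\sslash \scr C_T}$ to $(Y \times_T T', \mathrm{pr}_Y^* F)$. On $M_0^R(E) = (T, E)$ (the fiber inclusion over the terminal object), this yields $(T \times_T T', f^*E) = (T', f^*E) = M_0^R(f^*E)$. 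Applying Construction \ref{cons:ex-trans} to this square produces the exchange transformation
\[ \Ex_0^*: M_{0,T'} \circ f^* \Rightarrow f^* \circ M_{0,T} : (\Sm_T)_{\sslash \scr C_T} \to \scr C(T'). \]
The three functors $M_T, M_{T'}$, and the left-hand $f^*$ on presheaves are cocontinuous by construction; the right-hand $f^*: \scr C(T) \to \scr C(T')$ is cocontinuous by hypothesis (2). Thus both composites in the proposed square are cocontinuous on $\PSh((\Sm_T)_{\sslash \scr C_T})$, and Lemma \ref{lemm:P-large}(2) uniquely extends $\Ex_0^*$ to the desired $\Ex_M^*$.

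For part (2), it suffices to check that $\Ex_M^*$ is an equivalence on representables, since both its source and target functors preserve small colimits and representables generate $\PSh((\Sm_T)_{\sslash \scr C_T})$ under small colimits. By construction, on a representable $(X, E)$ with $X \in \Sm_T$ and $E \in \scr C(X)$, the transformation $\Ex_M^*$ specializes to the exchange map
\[ p_{X'\sharp}\, (f')^* E \to f^*\, p_{X\sharp} E, \]
where $f': X' := X \times_T T' \to X$ and $p_{X'}: X' \to T'$ denote the evident projections (with $X'$ smooth over $T'$ since $X$ is smooth over $T$). This is precisely the smooth base change transformation $ex^*_\sharp$ applied to the smooth morphism $p_X: X \to T$ pulled back along $f$, which is an equivalence by hypothesis. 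The claim follows.

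The main obstacle is verifying the commutativity of the fiber-inclusion square carefully enough that the resulting $\Ex_0^*$, and hence $\Ex_M^*$, is genuinely the Beck--Chevalley transformation and not merely a pointwise equivalent map. This is essentially a bookkeeping issue about the interaction between Construction \ref{f*-construct} and the Grothendieck construction, and the identification of $\Ex_M^*$ at a representable $(X, E)$ with the smooth base change map for $p_X$ requires tracing through the adjunctions $p_{X\sharp} \dashv p_X^*$ that define $M_0$ together with the explicit description of the right adjoint in Construction \ref{f*-construct}.
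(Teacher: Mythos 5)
Your proposal is correct and follows essentially the same route as the paper: both establish commutativity of the fiber-inclusion square for $M_0^R$ and $f^*$, apply Construction \ref{cons:ex-trans} to get the exchange transformation on representables, extend cocontinuously via Lemma \ref{lemm:P-large}(2) using that $f^*:\scr C(T)\to\scr C(T')$ preserves small colimits, and then check the result on representables where it reduces to $\Ex^*_\sharp$. Your extra care in unwinding Construction \ref{f*-construct} and identifying the map at $(X,E)$ with the smooth base change transformation for $p_X$ is exactly the content the paper leaves implicit.
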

\begin{proof}
(1) By constriction the functor $f^*: (\Sm_S)_{\sslash \scr C_T} \to (\Sm_T)_{\sslash \scr C_{T'}}$ maps the fiber above $T$ to the fiber above $T'$ and induces the functor $f^*: \scr C(T) \to \scr C(T')$ on these fibers.
In other words, the following diagram commutes
\begin{equation*}
\begin{CD}
\scr C(T) @>M_{0T}^R>> (\Sm_T)_{\sslash \scr C_T} \\
@Vf^*VV                @Vf^*VV \\
\scr C(T') @>M_{0T'}^R>> (\Sm_{T'})_{\sslash \scr C_{T'}} \\
\end{CD}
\end{equation*}
Construction \ref{cons:ex-trans} supplies us with an exchange transformation  \[\Ex_{0M}^*: M_{0T'} f^* \Rightarrow f^* M_{0T},\] which extends canonically 
to a transformation
\[
\Ex^*_M: M_{T'}f^* \Rightarrow f^*  M_{T}: \PSh((\Sm_T)_{\sslash \scr C_T}) \to \scr C(T')
\]
(using Lemma \ref{lemm:P-large}(2)); we use the fact that $f^*: \scr C(T) \rightarrow \scr C(T')$ preserves small colimits to ensure that the extension of $f^*  M_{T}$ to $ \PSh((\Sm_T)_{\sslash \scr C})$ agrees with the composite $f^* \circ M_T$. 

(2) It is then enough to show that $\Ex^*_M$ is an equivalence on all objects $(X, E) \in (\Sm_T)_{\sslash \scr C_T}$ (i.e. $X \in \Sm_T$ and $E \in \scr C(X)$). It follows from the construction that this is precisely $\Ex^*_\sharp(E)$.
\end{proof}

\begin{remark} \label{ex:changing-S}
Each of the functors $\Sm_S^\op \to \widehat{\Cat}_\infty$ from Example \ref{ex:changing-C} has an evident extension to $\Sch^\op$ satisfying all the assumptions of Proposition \ref{prop:changing-S}. In other words, motivic colimits in any of these categories commute with arbitrary base change (just like ordinary colimits).
\end{remark}

\subsection{Interaction with $f_\sharp$} \label{subsec:interaction-M-f-sharp}

Let $f: T \to S \in \Sm_S$. We have a cartesian square
\begin{equation*}
\begin{CD}
(\Sm_T)_{\sslash \scr C_T} @>>> (\Sm_S)_{\sslash \scr C} \\
@VVV                            @VVV                     \\
\Sm_T                      @>{f_\sharp}>> \Sm_S.
\end{CD}
\end{equation*}
We denote the top horizontal arrow by $f_\sharp: (\Sm_T)_{\sslash \scr C_T} \to (\Sm_S)_{\sslash \scr C}$.

\begin{proposition} \label{prop:interaction-M-f-sharp}
Let $f: T \to S \in \Sm_S$.
\begin{enumerate}
\item The cocontinuous extension $f_\sharp: \PSh((\Sm_T)_{\sslash \scr C_T}) \to \PSh((\Sm_S)_{\sslash \scr C})$ induces a commutative diagram
\begin{equation*}
\begin{CD}
\PSh(\Sm_T)_{/ \scr C_T} @>>> \PSh(\Sm_T)_{\sslash \scr C_T} @>>> \PSh((\Sm_T)_{\sslash \scr C_T}) \\
@V{f_\sharp}VV @V{f_\sharp}VV @V{f_\sharp}VV \\
\PSh(\Sm_S)_{/ \scr C} @>>> \PSh(\Sm_S)_{\sslash \scr C} @>>> \PSh((\Sm_S)_{\sslash \scr C}).
\end{CD}
\end{equation*}
\item The functor $f_\sharp: \PSh((\Sm_T)_{\sslash \scr C_T}) \to \PSh((\Sm_S)_{\sslash \scr C})$ has a right adjoint $f^*$ which satisfies $f^*((\Sm_S)_{\sslash \scr C}) \subset (\Sm_T)_{\sslash \scr C_T}$.
  In fact for $X \in \Sm_S$ and $a \in \scr C(X)$ we have $f^*(X, a) = (X \times_S T, p^* a)$ where $p: X \times_S T \to X$ is the canonical projection.
  Moreover $f^*: \PSh((\Sm_S)_{\sslash \scr C}) \to \PSh((\Sm_T)_{\sslash \scr C_T})$ restricts to $f^*: \PSh(\Sm_S)_{\sslash \scr C} \to \PSh(\Sm_T)_{\sslash \scr C_T}$ and is compatible with $f^*: \PSh(\Sm_S)_{/ \scr C} \to \PSh(\Sm_T)_{/ \scr C_T}$.
\item Suppose that $M_S$ and $M_T$ exist. Then there is a canonical isomorphism $ex_{\sharp M}: f_\sharp M_T \wequi M_S f_\sharp \in \Fun(\PSh((\Sm_T)_{\sslash \scr C_T}), \scr C(S))$.
\end{enumerate}
\end{proposition}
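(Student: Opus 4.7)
The plan is to handle the three parts in turn, with (1) and (2) being essentially formal and (3) containing the main content.

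For (1), the cartesian square displayed immediately above the proposition produces $f_\sharp: (\Sm_T)_{\sslash \scr C_T} \to (\Sm_S)_{\sslash \scr C}$. Since restricting the fibers from $\scr C_{(\ph)}$ to $\scr C_{(\ph)}^\simeq$ is stable under base change along $f_\sharp$ (pullback does not create non-invertible morphisms), one obtains $f_\sharp: (\Sm_T)_{/ \scr C_T} \to (\Sm_S)_{/ \scr C}$ compatibly with the inclusions. Passing to presheaves and applying Lemma \ref{lemm:psh-slice-comp} to identify $\PSh((\Sm_?)_{/\scr C_?}) \simeq \PSh(\Sm_?)_{/\scr C_?}$, together with the definition of $\PSh(\Sm_?)_{\sslash \scr C_?}$ as the essential image of the middle column, yields the commutative diagram of (1).

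For (2), the cocontinuous extension $f_\sharp: \PSh((\Sm_T)_{\sslash \scr C_T}) \to \PSh((\Sm_S)_{\sslash \scr C})$ is the left Kan extension of a functor of small categories and so automatically has a right adjoint $f^*$. To show $f^*$ preserves representables with the claimed formula, compute for $(Y, b) \in (\Sm_T)_{\sslash \scr C_T}$ and $(X, a) \in (\Sm_S)_{\sslash \scr C}$:
\[
\Map(f_\sharp(Y,b), (X,a)) \simeq \Map_{(\Sm_S)_{\sslash \scr C}}((Y,b), (X,a)).
\]
This mapping space parametrizes an $S$-morphism $g: Y \to X$ together with a morphism $b \to g^* a$ in $\scr C(Y)$; factoring $g$ as a $T$-morphism $g': Y \to X \times_S T$ and rewriting $g^*a \simeq g'^* p^* a$ identifies the above with $\Map_{(\Sm_T)_{\sslash \scr C_T}}((Y, b), (X \times_S T, p^* a))$, as desired. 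Compatibility with the subcategories $\PSh(\Sm_?)_{\sslash \scr C_?}$ and $\PSh(\Sm_?)_{/\scr C_?}$ is immediate from the explicit formula, since pullback preserves equivalences.

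For (3), we produce $ex_{\sharp M}$ by applying Construction \ref{cons:ex-trans} to the square of right adjoints
\begin{equation*}
\begin{CD}
\scr C(S) @>M_{0S}^R>> (\Sm_S)_{\sslash \scr C} \\
@V{f^*}VV @V{f^*}VV \\
\scr C(T) @>M_{0T}^R>> (\Sm_T)_{\sslash \scr C_T},
\end{CD}
\end{equation*}
which commutes since both composites send $a \in \scr C(S)$ to $(T \in \Sm_T, f^*a \in \scr C(T))$. The resulting transformation between left adjoints $f_\sharp M_{0T} \Rightarrow M_{0S} f_\sharp$ extends to presheaves just as in the proof of Proposition \ref{prop:changing-S}, using that $f_\sharp: \scr C(T) \to \scr C(S)$ is cocontinuous. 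To show $ex_{\sharp M}$ is an equivalence, it suffices by cocontinuity of both sides to check on representables $(X, E) \in (\Sm_T)_{\sslash \scr C_T}$: the left-hand side computes $f_\sharp p_{X/T,\sharp} E$, while the right-hand side computes $p_{X/S,\sharp} E$ (since $f_\sharp(X, E)$ is $X$ viewed as an $S$-scheme via $f \circ p_{X/T}$, with the same underlying object of $\scr C(X)$). The equivalence then comes from the composition isomorphism $f_\sharp p_{X/T,\sharp} \simeq (f \circ p_{X/T})_\sharp = p_{X/S,\sharp}$, obtained by taking left adjoints of $p_{X/T}^* f^* = p_{X/S}^*$.

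The main obstacle is bookkeeping: one must verify that the exchange transformation constructed abstractly via Construction \ref{cons:ex-trans} agrees, on representables, with the one implicit in the final paragraph, and that its cocontinuous extension to presheaves is still an exchange transformation of motivic-colimit functors. The only nontrivial geometric input is the composition law for $(\ph)_\sharp$ along smooth morphisms.
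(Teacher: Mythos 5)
Your proof is correct and, for parts (1) and (2), matches the paper's argument: the paper disposes of (2) by citing Construction \ref{f*-construct}, whose proof is exactly the adjunction computation you write out (factor the $S$-morphism $g\colon Y\to X$ through $X\times_S T$ and rewrite $g^*a\simeq g'^*p^*a$). In part (3) you take a mildly different route to $ex_{\sharp M}$: the paper defines it by adjunction as the mate of $M_T \xRightarrow{M_T\eta} M_T f^* f_\sharp \xRightarrow{ex_M^* f_\sharp} f^* M_S f_\sharp$, whereas you pass to left adjoints in the commuting square $f^* M_{0S}^R \simeq M_{0T}^R f^*$. Both routes end with the same verification on representables via $f_\sharp p_\sharp \simeq (fp)_\sharp$, so the difference is cosmetic. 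Two small imprecisions worth fixing. First, Construction \ref{cons:ex-trans} applied literally to your square (taking the $f^*$'s as the adjointable maps) produces $f_\sharp M_{0T}^R \Rightarrow M_{0S}^R f_\sharp$, a transformation between the \emph{right} adjoints $M_0^R$; what your argument actually uses is the (correct, standard) fact that a commuting square of right adjoints induces, by uniqueness of adjoints, a commuting square of left adjoints, which gives $f_\sharp M_{0T}\simeq M_{0S} f_\sharp$ directly — state it that way rather than as an instance of the Beck--Chevalley construction. Second, the categories $(\Sm_?)_{\sslash \scr C_?}$ are only locally small, so the existence of the right adjoint in (2) is not "automatic" from left Kan extension along a functor of small categories; it should be justified as in Construction \ref{f*-construct} together with the observation that restriction along $f_\sharp$ preserves the subcategory $\PSh(-)$ of Definition \ref{def:P-large-convention} (it sends representables to representables and preserves colimits).
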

\begin{proof}
(1) Since $\PSh(\Sm_T)_{\sslash \scr C_T} \to \PSh((\Sm_T)_{\sslash \scr C_T})$ is fully faithful by definition, and similarly for $S$, it suffices to construct the outer commutative rectangle. Since all functors are cocontinuous, it suffices to construct
\begin{equation*}
\begin{CD}
(\Sm_T)_{/\scr C_T} @>>> (\Sm_T)_{\sslash \scr C_T} \\
@VVV                       @VVV                     \\
(\Sm_S)_{/\scr C} @>>> (\Sm_S)_{\sslash \scr C}.
\end{CD}
\end{equation*}
This is just obtained by pulling back the map $\scr C^\wequi \to \scr C$ along $f_\sharp: \Sm_T \to \Sm_S$, and taking the associated cartesian fibrations.

(2) This is a special case of Construction \ref{f*-construct}.

(3) In order to construct $ex_{\sharp M}: f_\sharp M_T \Rightarrow M_S f_\sharp$, we may by adjunction construct $M_T \Rightarrow f^* M_S f_\sharp$. This we take to be the composite $M_T \xRightarrow{M_T \eta} M_T f^* f_\sharp \xRightarrow{ex_M^* f_\sharp} f^* M_S f_\sharp$, where $\eta$ is the unit of the adjunction $f_\sharp \dashv f^*$ and $ex_M^*$ is from Proposition \ref{prop:changing-S}. In order to prove that $ex_{\sharp M}$ is an isomorphism, since all functors involved are cocontinuous, we may restrict to $(\Sm_T)_{/\scr C}$. Thus let $p: X \to T \in \Sm_T$ and $E \in \scr C(X)$. Then $f_\sharp M_T (X, E) \wequi f_\sharp p_\sharp E \wequi (fp)_\sharp E \wequi M_S f_\sharp (X, E)$. This concludes the proof.
\end{proof}

\begin{remark}
The functor $f_\sharp: \PSh(\Sm_T)_{/\scr C_T} \to \PSh(\Sm_S)_{/\scr C}$ can be informally described as $(\scr X \to \scr C_T) \mapsto (f_\sharp \scr X \to f_\sharp \scr C_T \xrightarrow{\alpha} \scr C)$, where $\alpha: f_\sharp \scr C_T \to \scr C$ corresponds by adjunction to the isomorphism $\scr C_T \to f^* \scr C$.
\end{remark}

\subsection{Interaction with norms}
\label{subsec:interaction-colimits-norms}

In this subsection, we will study the situation where $\scr C$ is provided with an additional covariant functoriality for finite étale morphisms. In other words, we assume given a commutative diagram
\begin{equation*}
\begin{tikzcd}
\Sm_S^\op \ar[r] \ar[d, "\scr C"] & \Span(\Sm_S, \all, \fet) \ar[dl, "\scr C^\otimes"] \\
\widehat{\Cat}_\infty.
\end{tikzcd}
\end{equation*}
In this generality, we denote the extra functoriality by $p_\otimes$. This could be a norm map as constructed in \cite{norms}, or it could be given by $p_\sharp$ or $p_*$, for example.

We denote by $\NAlg(\scr C^\otimes)$ the $\infty$-category of sections of the cocartesian fibration classified by $\scr C^\otimes$, which are cocartesian over $\Sm_S^{\op} \hookrightarrow \Span(\Sm_S, \all, \fet).$ c.f. \cite[\S7]{norms}.

We now wish to repeat the discussion of \S\ref{subsec:interaction-M-f-sharp}, with $p_\otimes$ in place of $f_\sharp$. Thus let $p: T \to S \in \FEt_S$. We have a functor $p_*: \PSh(\Sm_T)_{\sslash \scr C_T} \to \PSh(\Sm_S)_{\sslash p_* \scr C_T}$. We also have a map $\alpha: p_* \scr C_T \to \scr C$, given on sections by $\alpha_X = p_{X\otimes}: p_* \scr C_T(X) \wequi \scr C(p^* X) \to \scr C(X)$. Postcomposing $p_*$ with $\alpha$ we obtain a functor $p_\otimes: \PSh(\Sm_T)_{\sslash \scr C_T} \to \PSh(\Sm_S)_{\sslash \scr C}$.

\begin{example}
Let $X \in \Sm_T$ such that the Weil restriction $R_p X \in \Sm_S$ exists (e.g. $X \in \SmQP_T$). Let $E \in \scr C(X)$. Then $p_\otimes (X, E) \wequi (R_p X, r_\otimes e^* E)$, where $e: p^* R_p X \to X$ is the counit of adjunction, and $r: p^* R_p X \to R_p X$ is the projection.
\end{example}

\begin{proposition} \label{prop:M-p-otimes}
Let $p: T \to S$ be finite étale and assume that $M_S, M_T$ exist. Assume that each $q_\otimes: \scr C(X) \to \scr C(Y)$ (for $q: X \to Y \in \Sm_S$ finite étale) preserves sifted colimits. Let $\scr P \subset \PSh(\Sm_T)_{\sslash \scr C_T}$ be the essential image of $\PSh_\Sigma(\SmQP_T)_{/\scr C_T}$. 
\begin{enumerate}
\item There is a canonical exchange transformation $ex_{M\otimes}: M_S p_\otimes \Rightarrow p_\otimes M_T \in \Fun(\scr P, \scr C(S))$.
\item Suppose that $\scr C$ satisfies the distributivity law of \cite[Proposition 5.12]{norms}. Then $ex_{M\otimes}$ is an isomorphism.
\end{enumerate}
\end{proposition}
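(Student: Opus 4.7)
The plan is to exploit the universal property of $\scr P$, identified with $\PSh_\Sigma(\SmQP_T)_{/\scr C_T}$, as the free sifted cocompletion of $(\SmQP_T)_{/\scr C_T}$: I would construct $ex_{M\otimes}$ first on representable objects $(X, E)$ with $X \in \SmQP_T$ and $E \in \scr C(X)$, where both sides of the proposed transformation can be evaluated explicitly, and then extend to all of $\scr P$ using that both composites preserve sifted colimits. Under the distributivity assumption of part (2), the map on representables is already an equivalence, which then propagates to $\scr P$.

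To make this work, I would first verify sifted cocontinuity of both composites on $\scr P$. The composite $p_\otimes M_T$ preserves sifted colimits on all of $\PSh(\Sm_T)_{\sslash \scr C_T}$: $M_T$ is cocontinuous by construction (see Lemma \ref{lemm:P-large}(2)), and $p_\otimes\colon \scr C(T) \to \scr C(S)$ preserves sifted colimits by hypothesis. For $M_S p_\otimes$ the cocontinuity of $M_S$ reduces matters to checking that $p_\otimes\colon \PSh(\Sm_T)_{\sslash \scr C_T} \to \PSh(\Sm_S)_{\sslash \scr C}$ preserves sifted colimits on $\scr P$, which follows from the pointwise formula $(X, E) \mapsto (R_p X, \pi_\otimes e^* E)$ on representables together with the hypothesis applied to $\pi_\otimes$. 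Consequently any natural transformation between the two composites, viewed as functors on $\scr P$, is determined by its restriction to the generators $(\SmQP_T)_{/\scr C_T}$.

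On such a generator $(X, E)$ the explicit formulas give
\[
M_S p_\otimes(X, E) \wequi (R_p X \to S)_\sharp \pi_\otimes e^* E \quad \text{and} \quad p_\otimes M_T(X, E) \wequi p_\otimes p_{X\sharp}^T E,
\]
where $R_p X$ is the Weil restriction, $e\colon p^* R_p X \to X$ is the counit of the adjunction $p^* \dashv R_p$, and $\pi\colon p^* R_p X \to R_p X$ is the pullback of $p$. The canonical Beck--Chevalley (distributivity) transformation associated to this square, as constructed in \cite[Proposition 5.12]{norms}, provides a natural map between these expressions; taking $ex_{M\otimes}$ on representables to be this map and extending to $\scr P$ by sifted colimits proves part (1). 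Part (2) then follows because the distributivity hypothesis is precisely the assertion that the Beck--Chevalley map is already an equivalence on each representable, so the extension is an equivalence as well.

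The main obstacle is the coherent naturality of the Beck--Chevalley map as a morphism of $\infty$-functors on $(\SmQP_T)_{/\scr C_T}$, rather than a compatible family of maps on objects. Unlike in Propositions \ref{prop:changing-C} and \ref{prop:changing-S}, we cannot invoke Construction \ref{cons:ex-trans} directly because $p_\otimes$ is not a left adjoint. The cleanest route seems to be to build the map inside the span-category formalism encoding $\scr C^\otimes$, where the $\sharp$ and $\otimes$ functorialities are packaged together and the distributivity transformation arises as a composite of structural units and counits; naturality then becomes automatic, and the extension by the universal property of $\PSh_\Sigma$ is formal.
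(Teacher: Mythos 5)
Your proposal is correct and follows essentially the same route as the paper: reduce to representables in $(\SmQP_T)_{\sslash \scr C_T}$ via sifted-cocontinuity of both composites, identify the two sides there by the explicit formulas for $p_\otimes$ and $M$, and take $ex_{M\otimes}$ to be the distributivity transformation $Dis_{\sharp\otimes}$ of \cite[Proposition 5.12]{norms}, whose invertibility is exactly the hypothesis of part (2). The naturality concern you raise at the end is handled in the reference you cite, where $Dis_{\sharp\otimes}$ is already constructed as a natural transformation.
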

\begin{proof}
The category $\scr P$ is equivalent to a full subcategory of $\PSh_\Sigma((\SmQP_T)_{\sslash \scr C_T})$. We have a functor $\tilde{p}_\otimes: (\SmQP_T)_{\sslash \scr C_T} \to (\SmQP_S)_{\sslash \scr C}$ given by $(X, E) \mapsto (R_p X, r_\otimes e^* E)$. The sifted-cocontinuous extension of $\tilde{p}_\otimes$ is compatible with the functor $p_\otimes$ we have constructed before. These observations together imply that we may replace $\scr P$ by $(\SmQP_T)_{\sslash \scr C_T}$. Then for (1) we need to construct $(R_p X \to S)_\sharp r_\otimes e^* E \to p_\otimes (X \to T)_\sharp E$, naturally in $(X, E) \in (\SmQP_T)_{\sslash \scr C_T}$. For this we can take the transformation $Dis_{\sharp\otimes}$ of \cite[Proposition 5.12]{norms}. For (2), we need to show that the transformation is an equivalence, which is the assumption.
\end{proof}

\begin{remark} By the main result of \cite{e-rune}, the assumption that $\scr C$ satisfies distributivity is equivalent to promoting the functor $\scr C$ from a functor out of a category of spans to the category of \emph{bispans}; see \cite[Section 3.5]{e-rune} for a discussion in the context of motivic homotopy theory.
\end{remark}

\begin{remark}
We were being slightly less careful in this subsection than in others, because actually the following stronger statement is true (under slightly stronger assumptions): the assignment 
$X \mapsto \PSh_\Sigma(\SmQP_X)_{\sslash \scr C_T}$ can be made 
functorial in 
$\Span(\Sm_S, \all, \fet)$, and then 
$M$ becomes a (strong) natural transformation \[\PSh_\Sigma(\SmQP_{(\ph)})_{\sslash \scr C_{(\ph)}} \Rightarrow \scr C^\otimes \in \Fun(\Span(\Sm_S, \all, \fet), \widehat{\Cat}_\infty).\] This is proved using the same arguments as in \cite[first half of \S16.3]{norms}.
\end{remark}

\section{Motivic left Kan extensions and $\FEt$-semiadditivity} \label{sec:motivic-kan-extensions} \NB{This is an alternative to the $(\infty,2)$-categories mess...}

In this section, we give the first working demonstration of motivic colimits. We will provide the motivic analog of the following construction. Suppose that $\scr C$ is a semiadditive $\infty$-category, i.e., a pointed $\infty$-category with finite products and coproducts such that for any $X, Y \in \scr C$, the canonical map
\[
X \amalg Y \rightarrow X \times Y
\]
is an equivalence; we denote the corresponding biproduct by $X \oplus Y$. Any semiadditive $\infty$-category enjoys the following features \cite[Section 2.1]{DAG13}:
\begin{enumerate}
\item for any morphism of Kan complexes $f:T \rightarrow S$ with discrete fibers with finitely many components, the pullback functor $f^*: \scr C^S \rightarrow \scr C^T$ admits a right adjoint (resp. a left adjoint):
\[
f_*\,(\text{resp. } f_!): \scr C^T \rightarrow \scr C^S.
\]
\item There is a norm map
\[
\mathrm{Nm}_f: f_! \Rightarrow f_*,
\]
adjoint to an ambidextrous transformation
\[
f^*f_! \Rightarrow \id.
\]
The former transformation is furthermore invertible.
\item Suppose that $p: S \rightarrow *, q: T \rightarrow *$ be the canonical maps. Applying the functor $p_!$ to the transformation
\[
\id \rightarrow f_*f^* \simeq f_!f^*,
\]
we get a map
\[
tr_f:p_! \rightarrow p_!f_!f^* \simeq q_!f^*, 
\]
which is usually called the transfer along $f$. One of its key features is as follows: if $X \in \scr C^S$, then
\[
p_!X \rightarrow p_!f^*X \simeq p_!f_!f^*X \xrightarrow{p_!\eta} p_!X,
\]
is invertible after inverting the ``multiplication by $n$" endomorphism in $\scr C$ (which is defined because $\scr C$ is semiadditive).
\end{enumerate}

In this section, we will paint a motivic analog of the above picture; our main result, Corollary~\ref{cor:mot-colimits-transfers}, proves a motivic analog of the third property above. Given the ubiquity and utility of these constructions in classical homotopy theory (for example in the formulation of topological cyclic homology of Nikolaus and Scholze \cite{nikolaus-scholze}), we hope that our formalism will be independent interest.

Let us first work towards the motivic analog of semiadditivity. We call a morphism $f: \scr X \to \scr Y \in \PSh(\Sm_S)$ \emph{finite étale} (respectively a \emph{clopen immersion}) if it is representable by a finite étale morphism (respectively clopen immersion) of schemes, i.e., if for every $Y \in \Sm_S$ and every morphism $Y \to \scr Y$ the pullback $\scr X \times_{\scr Y} Y \to Y$ is isomorphic to a finite étale morphism (respectively clopen immersion) of schemes.
Note that if $f$ is finite étale (respectively a clopen immersion) then $\Delta_f: \scr X \to \scr X \times_{\scr Y} \scr X$ is a clopen immersion (respectively an isomorphism), since the same holds for schemes and colimits are stable under pullback in the $\infty$-topos $\PSh(\Sm_S)$.

\begin{remark}\label{rem:0-fet} It is possible to iterate the above definition in an analogous fashion to truncativity of maps in classical homotopy theory. From this point-of-view, a clopen morphism should be called \emph{$(-1)$-finite \'etale} while a finite \'etale morphism should be called \emph{$0$-finite \'etale}. A \emph{$1$-finite \'etale} morphsim should then be a morphism of stacks $\scr X \rightarrow \scr Y$ whose diagonal is finite \'etale. This notion is not very interesting if the stacks are schemes, but is already interesting for maps of Deligne-Mumford stacks (for example, $BG \rightarrow *$ for $G$ a finite \'etale group scheme). With this, one can develop a notion of higher $\FEt$-semiadditivity, which we postpone to a sequel.
\end{remark}

\begin{definition} \label{def:fet-semiadd}
Suppose that we are given $\widehat{\scr C}: \PSh(\Sm_S)^\op \to \widehat{\Cat}_\infty$.
\begin{enumerate}
\item We say that $\widehat{\scr C}$ \emph{has $\FEt$-colimits} if for every (relative) finite étale morphism $f: \scr X \to \scr Y$ the functor $f^*: \widehat{\scr C}(\scr Y) \to \widehat{\scr C}(\scr X)$ admits a left adjoint $f_\sharp$.
\item We say that the \emph{$\FEt$-colimits} are compatible with base change if for any cartesian square
\begin{equation*}
\begin{CD}
\scr X' @>g'>> \scr X \\
@V{f'}VV        @VfVV \\
\scr Y' @>g>>  \scr Y
\end{CD}
\end{equation*}
  with $f$ finite étale, the canonical exchange transformation $f'_\sharp g'^* \Rightarrow g^* f_\sharp$ is an equivalence.
\item Given a morphism $f: \scr X \to \scr Y$, form the square
\begin{equation*}
\begin{CD}
\scr X \times_{\scr Y} {\scr X} @>f_1>> \scr X \\
@V{f_2}VV        @VfVV \\
\scr X @>f>>  \scr Y
\end{CD}
\end{equation*}
Suppose that $\scr C$ has $\FEt$-colimits compatible with base change. We say that $\widehat{\scr C}$ is \emph{semiadditive} if for every clopen immersion $f: \scr X \to \scr Y$ the canonical, invertible transformation \[ f^* f_\sharp \wequi f_{2\sharp}f_1^* \xrightarrow{\wequi} \id \] exhibits $f_\sharp$ as right adjoint to $f^*$.
 
Here the first equivalence is because $\widehat{\scr C}$ has $\FEt$-colimits which are compatible with base change and the second is because $f$ is a clopen immersion, so the projections $f_{1,2}: \scr X \times_{\scr Y} \scr X \to \scr X$ are isomorphism.
\item We say that $\widehat{\scr C}$ is \emph{$\FEt$-semiadditive} if it is semiadditive and, in addition, for every finite étale morphism $f: \scr X \to \scr Y$ with diagonal $\Delta$, the canonical transformation 
\begin{equation} \label{eq:counit}
 f^*f_\sharp \wequi f_{2\sharp}f_1^* \Rightarrow f_{2\sharp}\Delta_\sharp \Delta^* f_1^* \wequi \id 
 \end{equation} exhibits $f_\sharp$ as right adjoint to $f^*$.
  Here we use that $\Delta$ is clopen, so $\Delta_\sharp$ is right adjoint to $\Delta^*$ by the semiadditivity assumption.
\end{enumerate}
We say that $\widehat{\scr C}$ is \emph{$\FEt$-semiadditive on schemes} if (4) holds whenever $\scr X, \scr Y \in \Sm_S$, and analogously for the other definitions.
\end{definition}

\begin{remark}\label{defn:norm} Suppose that $\widehat{\scr C}$ is $\FEt$-semiadditive and $f: \scr X \rightarrow \scr Y$ is a finite \'etale morphism, then the transformation~\eqref{eq:counit} is adjoint to a transformation
\[
\mathrm{Nm}^{\mot}_f:f_{\sharp} \rightarrow f_*
\]
This is a generalization of the norm construction for additive $\infty$-categories as in \cite[Section 2.1]{DAG13} and we call it the \emph{motivic norm transformation} along $f$.
\end{remark}

\begin{example}
$\SH(\ph)$ is $\FEt$-semiadditive.
Indeed the transformation $f^*f_\sharp \Rightarrow \id$ coincides (by construction) with the ambidexterity transformation \cite[(5.20)]{Hoyois:6functor}, which exhibits $f_\sharp$ as right adjoint to $f^*$ by \cite[Theorem 6.9]{Hoyois:6functor}. This property then persists for modules over any (highly structured) motivic ring spectrum such as $H\Z, KGL, MGL$ and the other usual suspects.
\end{example}

\begin{lemma} \label{lemm:continuous-extension-still-semiadd}
Suppose given $\scr C: \Sm_S^\op \to \widehat{\Cat}_\infty$ and write $\widehat{\scr C}$ for its continuous extension to $\PSh(\Sm_S)^\op$.
Suppose that $\scr C$ has $\FEt$-colimits stable under base change (respectively is semiadditive, respectively is $\FEt$-semiadditive), on schemes.
Then $\widehat{\scr C}$ has $\FEt$-colimits stable under base change (respectively is semiadditive, respectively is $\FEt$-semiadditive).
\end{lemma}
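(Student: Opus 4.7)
The strategy is to reduce every statement to the scheme case by using the limit presentation of $\widehat{\scr C}$ and assembling scheme-level data pointwise. For every $\scr Y \in \PSh(\Sm_S)$, choose a presentation $\scr Y \simeq \colim_{i \in I} Y_i$ with $Y_i \in \Sm_S$; since $\widehat{\scr C}$ is continuous (being the right Kan extension of $\scr C$ along the Yoneda embedding), we have
\[ \widehat{\scr C}(\scr Y) \simeq \lim_{i \in I^\op} \scr C(Y_i). \]
Given a representable finite étale morphism $f : \scr X \to \scr Y$, set $X_i := \scr X \times_{\scr Y} Y_i$; then each $X_i \in \Sm_S$ and $f_i : X_i \to Y_i$ is finite étale. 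Since colimits are universal in $\PSh(\Sm_S)$ we have $\scr X \simeq \colim_i X_i$, hence $\widehat{\scr C}(\scr X) \simeq \lim_{i \in I^\op} \scr C(X_i)$ and $f^*$ is the limit of the levelwise $f_i^*$.

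By the scheme-level hypothesis each $f_{i\sharp}$ exists, and for every transition map $\alpha : Y_j \to Y_i$ with induced $\alpha' : X_j \to X_i$ the scheme-level base-change assumption gives an equivalence $f_{j\sharp}\alpha'^* \wequi \alpha^* f_{i\sharp}$. This Beck--Chevalley data assembles the collection $\{f_{i\sharp}\}$ into a morphism of diagrams $I^\op \to \widehat{\Cat}_\infty$, whose limit defines a functor $f_\sharp : \widehat{\scr C}(\scr X) \to \widehat{\scr C}(\scr Y)$. A direct mapping-space computation using the levelwise adjunctions $f_{i\sharp} \dashv f_i^*$ (whose coherence is precisely the Beck--Chevalley data) shows $f_\sharp \dashv f^*$. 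Base-change stability on $\PSh(\Sm_S)$ then follows from the scheme-level case: given a cartesian square in $\PSh(\Sm_S)$ with horizontal finite étale maps, pull back a presentation of $\scr Y'$ throughout and observe that the presheaf-level exchange transformation becomes, level by level, the scheme-level exchange, which is invertible by hypothesis.

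Finally we address (a) semiadditivity and (b) $\FEt$-semiadditivity. If $f : \scr X \to \scr Y$ is a clopen immersion in $\PSh(\Sm_S)$ then each $f_i$ is a clopen immersion of schemes, so the levelwise counits $f_i^* f_{i\sharp} \Rightarrow \id$ exhibit $f_{i\sharp}$ as a right adjoint to $f_i^*$ by hypothesis; passing to the limit shows that the analogous counit for $f$ exhibits $f_\sharp$ as right adjoint to $f^*$. For (b) the diagonal $\Delta_f$ is clopen, and after pulling back to any $Y_i$ the transformation \eqref{eq:counit} becomes the corresponding scheme-level transformation, which is a counit of adjunction by the scheme-level $\FEt$-semiadditivity assumption; since limits preserve unit/counit data the presheaf-level transformation is also a counit of adjunction.

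The technical heart of the argument is the sentence ``the collection $\{f_{i\sharp}\}$ assembles into a morphism of diagrams''. Rigorously, Beck--Chevalley compatibility for a natural transformation $\alpha : F \Rightarrow G$ of functors $I \to \widehat{\Cat}_\infty$ equips the componentwise left adjoints $L_i$ with the structure of a natural transformation $L : G \Rightarrow F$, so that $\lim L$ is left adjoint to $\lim \alpha$; this is the main obstacle and should either be invoked as a known fact (via straightening-unstraightening applied to cocartesian fibrations admitting fiberwise left adjoints) or verified directly by the mapping-space calculation above. Once this is settled, every remaining assertion is a transparent passage from the scheme-level hypothesis to its presheaf-level counterpart.
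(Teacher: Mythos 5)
Your proof is correct and follows essentially the same route as the paper: both write $\widehat{\scr C}(\scr Y)$ and $\widehat{\scr C}(\scr X)$ as limits over the (category of elements of a) presentation of $\scr Y$ by schemes, use the scheme-level base-change hypothesis as the Beck--Chevalley data needed to assemble the levelwise $f_{i\sharp}$ into a left adjoint of $\lim_i f_i^*$, and then read off the remaining claims objectwise. The ``technical heart'' you flag is exactly what the paper supplies, by modelling the limit as cartesian sections of the associated fibration and invoking the theory of relative adjoints from \cite[Appendix D]{norms} to get an objectwise-computed left adjoint preserving cartesian sections.
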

\begin{proof}
Recall that given a diagram $F: I^\op \to \widehat{\Cat}_\infty$, its limit can be modelled as the cartesian sections of the associated fibration \cite[Corollary 3.3.3.2]{HTT}: \[ \lim_I F \subset \Sect_{I}\left(I_{\sslash F}\right). \]
Suppose given a natural transformation $\alpha: F \to G \in \Fun(I^\op, \widehat{\Cat}_\infty)$ and suppose that each component $\alpha(i): F(i) \to G(i)$ admits a left adjoint, compatibly with the transition maps.
Then the associated morphism of fibrations is a relative right adjoint \cite[Lemma D.3, Remark D.4]{norms} and hence the induced cartesian morphism of cartesian fibrations has a left adjoint $\beta$ computed objectwise \cite[Lemma D.6, Remark D.4]{norms}.
Since the left adjoints of the $\alpha(i)$ are compatible, $\beta$ preserves the subcategory of cartesian sections.
Thus, under the above assumptions, $\lim_I \alpha: \lim_I F \rightarrow \lim_I G$ has a left adjoint computed in the obvious way in terms of the left adjoints of the $\alpha(i)$.

Now suppose given a finite étale morphism $f: \scr X \to \scr Y$. Since $\widehat{\scr C}$ is continuous we have \[ \widehat{\scr C}(\scr Y) \wequi \lim_{T \to \scr Y} \scr C(T) \quad\text{and}\quad \widehat{\scr C}(\scr X)  \wequi \lim_{T \to \scr Y} \scr C(T \times_{\scr Y} \scr X), \] where the limits are indexed by (the opposite of) $I = (\Sm_S)_{\sslash \scr Y}$.
Applying the above discussion, we deduce that $f^*: \widehat{\scr C}(\scr Y) \to \widehat{\scr C}(\scr X)$ admits a left adjoint $f_\sharp$ computed objectwise.
All other claims are immediate from this description.
\end{proof}

Note that the above proof shows that $\widehat{\scr C}(\scr X)$ can be identified with the (non-full) subcategory of $\PSh(\Sm_S)_{\sslash \scr C} \subset \PSh((\Sm_S)_{\sslash \scr C})$ where the source is identified with $\scr X$.
\begin{proposition} \label{prop:mot-kan}
Suppose that motivic colimits exist in $\scr C$.
The canonical functor $\scr C(S) \to \widehat{\scr C}(\scr X)$ (pullback along $p: \scr X \to *$) admits a left adjoint $p_\sharp$, given by \[ \widehat{\scr C}(\scr X) \to \PSh((\Sm_S)_{\sslash \scr C}) \xrightarrow{M} \scr C(S). \]
\end{proposition}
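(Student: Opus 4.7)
The plan is to realize $p_\sharp$ as the stated composite and then derive the adjunction from the partial adjunction already established in Proposition~\ref{prop:M-right-adjoint}. First I will appeal to the observation immediately preceding the proposition, which identifies $\widehat{\scr C}(\scr X)$ with a (non-full) subcategory of $\PSh(\Sm_S)_{\sslash \scr C} \subset \PSh((\Sm_S)_{\sslash \scr C})$ by sending $F \in \widehat{\scr C}(\scr X)$ to $(\scr X, F)$. Writing $\iota$ for this embedding, the functor $p_\sharp := M \circ \iota$ is my candidate left adjoint.

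Next, I will recall that, as shown in the proof of Proposition~\ref{prop:M-right-adjoint}, the functor $M: \PSh((\Sm_S)_{\sslash \scr C}) \to \scr C(S)$ is obtained as a cocontinuous extension of $M_0$ and is therefore a partial left adjoint to the functor $M_0^*: \scr C(S) \to \widehat{\PSh}((\Sm_S)_{\sslash \scr C})$ sending $E$ to the presheaf $(X, G) \mapsto \Map_{\scr C(X)}(G, p_X^* E)$. This supplies a natural equivalence
\[
\Map_{\scr C(S)}(p_\sharp F, E) \simeq \Map_{\widehat{\PSh}((\Sm_S)_{\sslash \scr C})}(\iota F, M_0^* E)
\]
for $F \in \widehat{\scr C}(\scr X)$ and $E \in \scr C(S)$.

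To identify the right-hand side with $\Map_{\widehat{\scr C}(\scr X)}(F, p^* E)$, I plan to use the colimit decomposition from Remark~\ref{rmk:compute-motivic-colim-explicit}, namely $\iota F \simeq \colim_{(x, X) \in (\Sm_S)_{\sslash \scr X}} (X, F(x))$ in $\PSh((\Sm_S)_{\sslash \scr C})$, where $F(x) \in \scr C(X)$ denotes the value of the section $F$ at $(x, X)$. This yields
\[
\Map(\iota F, M_0^* E) \simeq \lim_{(x, X)^{\op}} \Map_{\scr C(X)}(F(x), p_X^* E).
\]
Continuity of $\widehat{\scr C}$ gives $\widehat{\scr C}(\scr X) \simeq \lim_{(x, X) \in (\Sm_S)_{\sslash \scr X}^{\op}} \scr C(X)$, so mapping spaces in $\widehat{\scr C}(\scr X)$ compute as limits of mapping spaces in the $\scr C(X)$; the limit above is then precisely $\Map_{\widehat{\scr C}(\scr X)}(F, p^* E)$.

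The main subtle point, and where I expect the bulk of the technical work, will be in verifying that these identifications are natural in $F \in \widehat{\scr C}(\scr X)$ (and in $E \in \scr C(S)$), so that we obtain an actual adjunction and not merely an objectwise equivalence. Since $\iota$ is only a non-full embedding, some care is needed to see that the colimit decomposition of $\iota F$ is functorial with respect to morphisms in $\widehat{\scr C}(\scr X)$ (and not only in $\widehat{\scr C}(\scr X)^{\wequi}$); the canonicity of the cocone and the naturality of the limit formula for mapping spaces should, however, make this go through.
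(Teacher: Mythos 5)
Your proposal is correct in outline, but it takes a genuinely different route from the paper's proof. You verify the adjunction on mapping spaces: you feed $\iota F$ into the partial adjunction $M \dashv M_0^*$ underlying Proposition \ref{prop:M-right-adjoint} (via Lemma \ref{lemm:big-P-partial-adj}), decompose $\iota F$ as a colimit using Remark \ref{rmk:compute-motivic-colim-explicit}, and identify the resulting limit of mapping spaces with $\Map_{\widehat{\scr C}(\scr X)}(F, p^*E)$ by continuity of $\widehat{\scr C}$. The paper never invokes Proposition \ref{prop:M-right-adjoint}; instead it factors $p^*$ itself as $\scr C(S) \to \Sect_{\scr X}(\scr C_0) \to \Sect_{\scr X}(\scr C)$, where $\scr C_0$ is the constant functor at $\scr C(S)$, observes that the second functor has a left adjoint computed objectwise (by the relative-adjunction argument from the proof of Lemma \ref{lemm:continuous-extension-still-semiadd}) and that the first has the colimit functor as left adjoint, and then matches the composite $\colim_{(x,X)} p_{X\sharp}(\ph)$ with $M$ via Remark \ref{rmk:compute-motivic-colim-explicit}. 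The trade-off is exactly at the point you flag as the bulk of your remaining work: composing two honest adjunctions produces a left adjoint with all naturality (in particular in $F$) built in, whereas your computation is a priori only objectwise, and you still have to check that the cocone $(X,F(x)) \to \iota F$ and the subsequent identifications are functorial in $F$ along the non-full embedding $\iota$. (Objectwise corepresentability of $E \mapsto \Map_{\widehat{\scr C}(\scr X)}(F, p^*E)$ already gives \emph{existence} of the left adjoint by the standard criterion; identifying it with $M\circ\iota$ as a functor is where the extra naturality is really needed, e.g.\ by producing a unit $F \to p^*M(\iota F)$ from the cocone maps $p_{X\sharp}F(x) \to M(\iota F)$.) Your route buys a direct link to the partial-adjoint description of $M$; the paper's factorization buys the coherence for free.
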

\begin{proof}\todo{I would prefer a more careful proof.}
It suffices to understand the left adjoint of \[ \tilde p^*: \scr C(S) \xrightarrow{p^*} \widehat{\scr C}(\scr X) \subset \Sect_{\scr X}(\scr C). \]
Let $\scr C_0$ be the constant functor at $\scr C(S)$.
Then $\tilde p^*$ factors as \[ \scr C(S) \xrightarrow{F} \Sect_{\scr X}(\scr C_0) \xrightarrow{G} \Sect_{\scr X}(\scr C). \]
Arguing as in the proof of Lemma \ref{lemm:continuous-extension-still-semiadd}, the functor $G$ has a left adjoint computed objectwise, and under the equivalence \[ \Sect_{\scr X}(\scr C_0) \wequi \Fun((\Sm_S)_{\sslash \scr X}^\op, \scr C(S)) \] the functor $F$ has as left adjoint the colimit functor.
Using Remark \ref{rmk:compute-motivic-colim-explicit}, the left adjoint of $\tilde p^*$ thus coincides with $M$.
\end{proof}

We call functors \[ p_{\sharp}:  \widehat{\scr C}(\scr X) \rightarrow \scr C(\scr Y) \] left adjoint to $p^*$ \emph{motivic left Kan extension functors} along $p$.
We have seen that $p_\sharp$ exists if either $p: \scr X \to \scr Y$ is finite étale and $\scr C$ has $\FEt$-colimits stable under base change (Lemma \ref{lemm:continuous-extension-still-semiadd}) or $\scr Y = S$ (Proposition \ref{prop:mot-kan}).
This terminology is justified by the characterization of left Kan extensions as in \cite[Proposition 4.3.3.7]{HTT}. It is possible to formulate a more general theory of motivic left Kan extensions, which we leave to future work. For now we have the following application. 

\begin{corollary} \label{cor:mot-colimits-transfers}
Let $\scr C$ be $\FEt$-semiadditive, $f: \scr X \to \scr Y \in \PSh(\Sm_S)$ be a finite étale morphism and $\scr Y \xrightarrow{D} \scr C$.
Then there is a canonical transfer morphism $\tr_f: M(D) \to M(Df)$.
Moreover, if $f$ is of degree $n$, and $Z \in \scr C(S)$ such that for every finite étale morphism $f_0: X \to Y \in \Sm_S$ of degree $n$ and every $E \in \scr C(Y)$ the canonical map $Z \otimes E \to Z \otimes f_{0\sharp}f_0^* E \to Z \otimes E$ is an equivalence, then so is $Z \otimes \tr_f$, provided that either $Z=\1$ or $\scr C$ satisfies the smooth projection formula.
\end{corollary}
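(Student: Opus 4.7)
By Lemma~\ref{lemm:continuous-extension-still-semiadd}, the continuous extension $\widehat{\scr C}$ inherits $\FEt$-semiadditivity from $\scr C$, so $f^* : \widehat{\scr C}(\scr Y) \to \widehat{\scr C}(\scr X)$ admits a left adjoint $f_\sharp$ which also serves as its right adjoint. In particular the adjunction $f^* \dashv f_\sharp$ supplies a unit $\eta_D : D \to f_\sharp f^* D$. By Proposition~\ref{prop:mot-kan}, the motivic colimit $M$ is the left adjoint $p_{\scr Y\sharp}$ of $p_{\scr Y}^*$, and composition of left adjoints yields $p_{\scr Y\sharp} f_\sharp \simeq p_{\scr X\sharp}$. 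I set
\[ \tr_f := p_{\scr Y\sharp}(\eta_D) : M(D) \;\longrightarrow\; p_{\scr Y\sharp} f_\sharp f^* D \simeq p_{\scr X\sharp}(Df) = M(Df). \]

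\textbf{Reduction to a morphism of schemes.} Write $\scr Y \simeq \colim_i Y_i$ with $Y_i \in \Sm_S$, and let $f_i : X_i \to Y_i$ be the pullback of $f$ (still finite étale of degree $n$), with $D_i := D|_{Y_i} \in \scr C(Y_i)$. Cocontinuity of $M$, together with the base-change and $f_\sharp$-compatibilities of Section~\ref{sec:motivic-colimits-and-exchange-trafos} (Propositions~\ref{prop:changing-S} and \ref{prop:interaction-M-f-sharp}) and naturality of $\eta$, give $\tr_f \simeq \colim_i \tr_{f_i}$. Since $Z \otimes (-)$ is cocontinuous, it suffices to treat each scheme-theoretic $f_i$, so we may assume $\scr X, \scr Y \in \Sm_S$.

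\textbf{The scheme case and main obstacle.} For $f : X \to Y$ in $\Sm_S$ and $D \in \scr C(Y)$, either $Z = \1$ or the smooth projection formula lets one commute $Z \otimes (-)$ past $p_\sharp$ and $f_\sharp$, identifying $Z \otimes \tr_f$ with $p_\sharp(\eta_{D'})$ where $D' := p^*Z \otimes D \in \scr C(Y)$, and identifying $Z \otimes f_\sharp f^* D \simeq f_\sharp f^* D'$. The counit $\epsilon_{D'} : f_\sharp f^* D' \to D'$ furnishes a candidate retraction $r_f := p_\sharp(\epsilon_{D'})$. By the hypothesis applied to $E = D$ (combined with the projection-formula identifications just recorded), the composite $\epsilon_{D'} \circ \eta_{D'}$ is an equivalence in $\scr C(Y)$; applying $p_\sharp$ shows $r_f \circ (Z \otimes \tr_f)$ is an equivalence, so $Z \otimes \tr_f$ is a split monomorphism.

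\textbf{The main obstacle} is upgrading ``split monomorphism'' to ``equivalence'': one must also show that $(Z \otimes \tr_f) \circ r_f = p_\sharp(\eta_{D'} \circ \epsilon_{D'})$ is invertible. I would attempt this by base-changing to $X$ via Proposition~\ref{prop:changing-S}: $f^*(\eta \epsilon)$ is controlled by the self-intersection $X \times_Y X$, which decomposes as a clopen splitting with one factor the diagonal $\Delta X$. The diagonal contribution reduces to the identity using semiadditivity for clopens (Definition~\ref{def:fet-semiadd}(3)), while the complementary contribution should be absorbed into a further instance of the hypothesis for pullbacks of $f$, after which the ambidexterity equivalence $f_\sharp \simeq f_*$ is used to descend the conclusion. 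Executing this symmetric half of the argument is the technically delicate step of the proof.
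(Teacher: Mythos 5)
Your construction of $\tr_f$ is exactly the paper's: ambidexterity from Lemma \ref{lemm:continuous-extension-still-semiadd} gives the unit $D \to f_\sharp f^*D$, and applying $p_\sharp = M$ via Proposition \ref{prop:mot-kan} gives the transfer. Your treatment of the retraction is also essentially right in substance, though the paper avoids your colimit decomposition of $\tr_f$ (whose justification via Propositions \ref{prop:changing-S} and \ref{prop:interaction-M-f-sharp} needs hypotheses not assumed here) by a cleaner dual move: $\widehat{\scr C}(\scr Y)$ is a \emph{limit} $\lim_{Y \to \scr Y}\scr C(Y)$, so a morphism there is an equivalence iff it is so objectwise, and objectwise the composite $Z \otimes D \to Z \otimes f_\sharp f^* D \to Z \otimes D$ is precisely the composite in the hypothesis for the pulled-back finite étale map $f_0: X \to Y$. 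One then applies $p_\sharp$ and commutes $Z \otimes \ph$ past it ($Z = \1$ or the projection formula). No passage through colimits of transfers is needed.

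The genuine problem is your final paragraph. The "main obstacle" you identify --- upgrading "split monomorphism" to "$Z \otimes \tr_f$ is an equivalence" --- is an attempt to prove a false statement, caused by reading the conclusion too literally. What the corollary asserts (see the paper's proof, and compare the explicitly worded conclusion of Proposition \ref{prop:Dmot-transfer}) is that the \emph{composite} $Z \otimes M(D) \xrightarrow{Z \otimes \tr_f} Z \otimes M(Df) \to Z \otimes M(D)$ is an equivalence, i.e. exactly what you establish when you show $r_f \circ (Z \otimes \tr_f)$ is invertible. The map $Z \otimes \tr_f$ itself is almost never an equivalence: take $f$ to be the fold map $S^{\amalg n} \to S$ and $D = E$ constant, so that $\tr_f: E \to E^{\oplus n}$ is the $n$-fold diagonal transfer; no amount of tensoring makes this invertible for $n \ge 2$, even though the composite with the fold map is multiplication by $n$. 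Accordingly, the self-intersection argument you sketch for showing $(Z \otimes \tr_f)\circ r_f$ invertible cannot be executed. Delete that paragraph; your proof is complete once the retraction identity is established.
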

\begin{proof}
We can view $D$ as an object of $\widehat{\scr C}(\scr Y)$ and similarly $Df$ as an object of $\widehat{\scr C}(\scr X)$; then $Df \wequi f^*(D)$.
By Lemma \ref{lemm:continuous-extension-still-semiadd} the functor $f^*$ has an ambidextrous adjoint $f_\sharp$, whence there is a canonical map \[ D \to f_\sharp f^* D \wequi f_\sharp Df. \]
Applying $p_\sharp$ and using Proposition \ref{prop:mot-kan} we obtain $\tr_f$.
Since a morphism in a section category is an equivalence if and only if it is so objectwise, the degree $n$ assumption implies that $Z \otimes D \to Z \otimes Df \to Z \otimes D$ is an equivalence; the moreover part follows.
\end{proof}

\begin{remark} Suppose that $\scr Y = Y, \scr X = X$ are (represented by) smooth schemes. Then $\alpha$ corresponds to an object $E \in {\scr C}(X)$, $f \circ \alpha$ corresponds to $f^* E$, and $f: g_\sharp f_\sharp E \to g_\sharp E$, where $g: X \to S$ is the structure map, is the canonical map. Then $tr_g: g_{\sharp} E \to g_{\sharp} f_{\sharp} E$ is supposed to be $g_{\sharp}$ of a canonical map $E \to f_{\sharp} f^*E$, namely the (finite) étale transfer (see e.g. \cite[Section 4]{bachmann-real-etale} \cite[Section 2.3]{ro-rigidity} for the case $\scr C(X) = \SH(X)$). Reverting to the general case, we can write $\scr X$ as a colimit of smooth schemes $X$, and then $\scr Y$ is the corresponding colimit of smooth $S$-schemes $f_X: f^{-1}(X) \to X$ finite étale over $X$. Informally, transfer $tr_f$ is the colimit of the transfers $tr_{f_X}$; the formalism of motivic colimits makes this precise.
\end{remark}


\section{Motivic extended and generalized powers} \label{sec:motivic-extended-powers}

In this section, we give the second working demonstration of motivic colimits by constructing motivic extended and generalized powers.
Fix a functor \[ \scr C^\otimes: \Span(\Sm_S, \fet, \all)^\op \wequi \Span(\Sm_S, \all, \fet) \to \widehat\Cat_\infty \] as in Section \ref{subsec:interaction-colimits-norms}.

For any scheme $X$, let us denote by $\FEt_X^\wequi$ the groupoid whose objects are finite \'etale morphisms $f: Y \rightarrow X$ and the morphisms are isomorphisms over $X$. The starting point for the construction of motivic extended and generalized powers is a functor of the form
 \[ N: \PSh(\Sm_S)_{/\FEt^\wequi} \times \scr C(S) \to \PSh(\Sm_S)_{\sslash \scr C}, \] where $\FEt^\wequi$ for the presheaf of groupoids 
\[
\FEt^\wequi: \Sm_S^\op \to \Spc, X \mapsto \FEt_X^\wequi.
\]
We will call this functor the \emph{fundamental diagram}. This functor is informally described as follows: it sends the pair
\[
(\scr X \to \FEt^\wequi, E \in \scr C(S))
\] to the diagram 
\[
\scr X \to \scr C \in \PSh(\Sm_S)_{\sslash \scr C},
\] which on sections over $Y \in \Sm_S$ is given by $\scr X(Y) \to \FEt_Y^\wequi \xrightarrow{N_E(Y)} \scr C(Y)$. Here, $N_E(Y): \FEt_Y^\wequi \to \scr C(Y)$ sends a finite \'etale morphism $p: U \to Y$ to $p_\otimes E_U$.

\subsection{The fundamental diagram}
\newcommand{\cart}{\mathrm{cart}}
\newcommand{\Cart}{\mathrm{Cart}}

Let us now make the above construction precise. Recall that given a small $\infty$-category $\scr D$, the \emph{Grothendieck construction} establishes an equivalence \cite[Section 3.2]{HTT}
\begin{equation} \label{eq:str}
\int: \Fun(\scr D^\op, \widehat{\Cat}_\infty) \simeq \Cart_{\scr D}.
\end{equation}
Here the right hand side has as objects the cartesian fibrations over $\scr D$.
If we have cartesian fibrations $\scr E_1, \scr E_2$ over $\scr D$, then the space of maps in the $\Cart_{\scr D}$ is defined as \[ \Map_{\scr D}^\cart(\scr E_1, \scr E_2) = \Fun_\scr{D}^{\cart}(\scr E_1, \scr E_2)^\wequi, \] where \[ \Fun_\scr{D}^{\cart}(\scr E_1, \scr E_2) \subset \Fun_\scr{D}(\scr E_1,\scr E_2) = \Fun(\scr E_1, \scr E_2) \times_{\Fun(\scr E_1, \scr D)} \{\scr E_1\} \] is the full subcategory on those functors preserving cartesian edges.

The next lemma follows once we know that the equivalence \eqref{eq:str} promotes to an equivalence of $(\infty,2)$-categories, though we do not need to use this language. To formulate it, consider the Yoneda functor
 \[
 R:\scr D \rightarrow \Fun(\scr D^\op, \Spc) \subset \Fun(\scr D^\op, \widehat{\Cat}_\infty).
 \]
We denote by $R_d \rightarrow \scr D$, the cartesian fibration obtained by performing the Grothendieck construction on $R(d)$, i.e., $R_d$ is the slice category $\scr D_{/d} \rightarrow \scr D^\op$.

\begin{lemma} \label{lemm:2-yoneda-bs}
For any $d \in \scr D$, evaluation at $(d,\id)$ defines an equivalence 
\[
\Fun^{\cart}_{\scr D}(R_d, \scr E) \wequi \scr E_d,
\]
where $\scr E_d$ is the fiber over $d$.
\end{lemma}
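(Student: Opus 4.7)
The claim is a $(\infty,2)$-categorical enhancement of the ordinary Yoneda lemma for cartesian fibrations. Let $F: \scr D^\op \to \widehat{\Cat}_\infty$ classify $\scr E$, so that $\scr E_d \wequi F(d)$.

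\textbf{Conceptual route.} The natural strategy is to invoke the Grothendieck construction as an equivalence of $(\infty,2)$-categories (or at least as an equivalence $\Cart_\scr{D} \wequi \Fun(\scr D^\op, \widehat{\Cat}_\infty)$ whose mapping spaces are compatible with the internal hom $\Fun^{\cart}_\scr{D}(-,-)^\wequi$). Under this equivalence $R_d$ corresponds to the representable presheaf $y(d) = \Map_\scr{D}(-, d) \in \Fun(\scr D^\op, \Spc) \hookrightarrow \Fun(\scr D^\op, \widehat{\Cat}_\infty)$. The $\Cat_\infty$-enriched Yoneda lemma then yields $\Fun^{\cart}_\scr{D}(R_d, \scr E) \wequi \lim_{(e,f) \in R_d^\op} F(e) \wequi F(d)$, where the final equivalence uses that $R_d = \scr D_{/d}$ has terminal object $(\id_d)$ so the indexing limit collapses.

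\textbf{Direct route.} If one prefers to avoid $(\infty,2)$-categorical machinery, I would construct the inverse by hand. Given $x \in \scr E_d$, define a functor $\phi_x: R_d \to \scr E$ over $\scr D$ by sending $(e, f: e \to d) \mapsto f^* x$, using $p$-cartesian lifts of $f$ terminating at $x$, where $p: \scr E \to \scr D$ is the projection. The universal property of cartesian edges makes $\phi_x$ functorial and cartesian (the coherent choice of lifts is exactly what straightening handles). This packages into a functor $\scr E_d \to \Fun^{\cart}_\scr{D}(R_d, \scr E)$, and composing with evaluation at $(d, \id_d)$ gives the identity on $\scr E_d$ because $\id_d^*x \wequi x$.

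To see the other composite is equivalent to the identity, I would use that $(\id_d) \in R_d$ is \emph{terminal}: for any object $(e, f) \in R_d$ the unique morphism $(e, f) \to (\id_d)$ lies over $f: e \to d$, so if $\phi: R_d \to \scr E$ is cartesian, its image of this morphism is a $p$-cartesian edge in $\scr E$ over $f$ ending at $\phi(\id_d)$. Consequently $\phi(e, f) \wequi f^* \phi(\id_d) \wequi \phi_{\phi(\id_d)}(e, f)$, naturally in $(e,f)$.

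\textbf{Main obstacle.} The substantive content is coherence: proving that the assignment $x \mapsto \phi_x$ assembles into a functor (of $\infty$-categories) rather than a map on objects, and that the final chain of equivalences in the last step is itself natural in $\phi$. This coherence is precisely what the enhanced Grothendieck construction supplies, which is why the conceptual route is preferable. I expect the paper to simply cite the $(\infty,2)$-categorical straightening equivalence, reducing the statement to the classical Yoneda lemma applied to the representable $y(d)$.
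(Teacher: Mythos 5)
Your proposal is correct, and the conceptual route is exactly the one the paper has in mind: immediately before the lemma the authors remark that it "follows once we know that the equivalence $\int$ promotes to an equivalence of $(\infty,2)$-categories," and the proof itself simply cites \cite[Lemma 2.5.7]{e-rune} (with $L$ taken to be all morphisms), which packages the cartesian-fibration Yoneda lemma you describe. Your limit computation $\Fun^{\cart}_{\scr D}(R_d,\scr E)\wequi \lim_{R_d^{\op}}F\circ\pi\wequi F(d)$, collapsing because $(\id_d)$ is terminal in $R_d=\scr D_{/d}$, is a correct way to make the citation's content explicit.
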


\begin{proof} This is a special case of \cite[Lemma 2.5.7]{e-rune} where $L$ is considered to be all morphsims.

\end{proof}
%
%

%

\begin{construction} \label{cons:N}
Consider the composite \begin{gather*} N_0^\dagger: \scr C(S) \wequi \Fun_{\Span(\Sm_S, \fet, \all)}^{\cart}(R_S, \Span(\Sm_S, \fet, \all)_{\sslash \scr C}) \\ \to \Fun_{\Sm_S}^{\cart}((\Sm_S)_{\sslash \FEt^\wequi}, (\Sm_S)_{\sslash \scr C}) \subset \Fun((\Sm_S)_{\sslash \FEt^\wequi}, (\Sm_S)_{\sslash \scr C}). \end{gather*}
Here the equivalence is given by Lemma \ref{lemm:2-yoneda-bs} and the map is restriction from fibrations over $\Span(\Sm_S, \fet, \all)$ to fibrations over $\Sm_S$, using that the functor represented by $S$ in $\Span(\Sm_S, \fet, \all)$ is exactly $\FEt^\wequi$.
By adjunction this corresponds to \[ N_0: (\Sm_S)_{\sslash \FEt^\wequi} \times \scr C(S) \to (\Sm_S)_{\sslash \scr C}. \]
The composite \[ N: \PSh(\Sm_S)_{/\FEt} \times \scr C(S) \to \PSh((\Sm_S)_{\sslash \FEt^\wequi} \times \scr C(S)) \xrightarrow{\PSh(N_0)} \PSh((\Sm_S)_{\sslash \scr C}) \] takes values in $\PSh(\Sm_S)_{\sslash \scr C}$ (indeed this is true for $N_0$, and $\PSh(\Sm_S)_{\sslash \scr C} \subset \PSh((\Sm_S)_{\sslash \scr C})$ is closed under colimits\todo{why?}).
\end{construction}

\begin{remark} \label{rmk:N-cocont}
By construction, $N$ is a cocontinuous functor in the first variable.
Hence for $(\alpha: \scr X \to \FEt, E) \in \PSh(\Sm_S)_{/\FEt} \times \scr C(S)$ we have \[ N(\alpha, E) = \colim_{p' \in (\Sm_S)_{\sslash \scr X}} (T, \alpha(p')_\otimes(E_U)), \] where we write $\alpha(p'): U \to T \in \FEt$.
\end{remark}

For any scheme $S$, we define the \emph{fundamental diagram} as the functor
\[
 \PSh(\Sm_S)_{/\FEt^\wequi} \times  \scr C(S) \xrightarrow{N} \PSh(\Sm_S)_{\sslash \scr C}.
\]
As explained in the next section, the motivic extended powers will be constructed out of the fundamental diagram.

\subsection{The definition of motivic extended powers} We now use the fundamental diagram functor $N$ to define motivic extended and genearlized powers, by taking appropriate motivic colimits. In preparation of that, we denote by $\FEt^{\wequi,n} \subset \FEt$ the subpresheaf of groupoids spanned by finite étale schemes of rank $n$.
\begin{lemma}\label{lem:stk} For any scheme $S$, we have a canonical equivalence
\[
\FEt^{\wequi, n}|_{\Sm_S} \wequi B_\et \Sigma_n \in \PSh(\Sm_S).
\]
 \end{lemma}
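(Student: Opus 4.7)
The plan is to show that both sides define the same presheaf of groupoids on $\Sm_S$ by identifying them pointwise with the groupoid of étale $\Sigma_n$-torsors, and then checking naturality. Recall that $B_\et \Sigma_n$ is defined (see e.g.\ \cite[Section 4]{A1-homotopy-theory} or \cite[Section 3.1]{gepner-heller}) as the étale classifying stack of $\Sigma_n$, so by construction $B_\et \Sigma_n(X)$ is the groupoid of étale $\Sigma_n$-torsors on $X$. On the other hand, $\FEt^{\wequi,n}(X)$ is the groupoid of rank-$n$ finite étale $X$-schemes.

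First I would construct a natural functor $\Phi_X \colon B_\et \Sigma_n(X) \to \FEt^{\wequi,n}(X)$ sending a torsor $P \to X$ to the associated bundle $P \times^{\Sigma_n} \underline{n}_X$, where $\underline{n}_X = \coprod_{i=1}^n X$ carries the standard $\Sigma_n$-action. This is a rank-$n$ finite étale scheme over $X$ by étale descent, and the construction is manifestly functorial in both $P$ and $X$. In the other direction, I would define $\Psi_X \colon \FEt^{\wequi,n}(X) \to B_\et \Sigma_n(X)$ by sending a rank-$n$ finite étale cover $Y \to X$ to the frame scheme $\mathrm{Isom}_X(\underline{n}_X, Y)$, which is a $\Sigma_n$-torsor on $X$ (free action by precomposition, étale local triviality of $Y$ ensures torsoriality).

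Next I would check $\Phi_X$ and $\Psi_X$ are inverse equivalences. The composite $\Phi_X \circ \Psi_X$ sends $Y \mapsto \mathrm{Isom}_X(\underline{n}_X, Y) \times^{\Sigma_n} \underline{n}_X$, and there is a canonical evaluation map to $Y$ which is an isomorphism (this can be checked étale locally, where $Y$ becomes trivial). Conversely, $\Psi_X \circ \Phi_X$ sends $P \mapsto \mathrm{Isom}_X(\underline{n}_X, P \times^{\Sigma_n} \underline{n}_X)$; there is a canonical map from $P$ sending $p$ to the isomorphism $i \mapsto [p,i]$, and again étale local triviality makes this an isomorphism. Naturality in $X \in \Sm_S$ is clear since both constructions commute with pullback.

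The only potentially subtle point — and thus the main thing to be careful about — is pinning down the precise model used for $B_\et \Sigma_n$ (it can be presented either via the Morel--Voevodsky/Totaro geometric model or as the étale classifying stack, and one must check that the comparison is with the presheaf-of-groupoids avatar relevant here). Granting the standard identification of $B_\et \Sigma_n(X)$ with étale $\Sigma_n$-torsors, the proof is then the classical dictionary between $G$-torsors and $G$-associated bundles, specialized to $G = \Sigma_n$ acting on an $n$-element set.
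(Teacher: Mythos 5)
Your proposal is correct and is essentially the paper's argument: the paper simply cites the standard correspondence between finite étale morphisms of degree $n$ and $\Sigma_n$-torsors (referring to \emph{Champs algébriques}, (6.1.3)), which is exactly the associated-bundle/frame-scheme dictionary you spell out. Your worry about the model of $B_\et\Sigma_n$ is also resolved in the paper's favor, since there $B_\et G$ is by definition the restriction of the quotient stack $[*/G]$ to $\Sm_S$, i.e.\ the presheaf of groupoids of étale $G$-torsors.
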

\begin{proof}
This follows from the standard correspondence between finite \'etale morphisms of degree $n$ and $\Sigma_n$-torsors \cite[(6.1.3)]{champs}.\todo{ref}
\end{proof}

\begin{definition}
Suppose that $\scr C(S)$ admits small colimits. We define the \emph{generalized motivic extended power} functor as the composite
\begin{equation} \label{eq:gen-power}
 D_{gen}^\mot: \PSh(\Sm_S)_{/\FEt^\wequi} \times  \scr C(S) \xrightarrow{N} \PSh(\Sm_S)_{\sslash \scr C} \xrightarrow{M} \scr C(S). 
 \end{equation}
We adopt the following notation for evaluating the above bifunctor on on objects of $\PSh(\Sm_S)_{/\FEt^\wequi}$:
\begin{enumerate}
\item For $\scr X \in \PSh(\Sm_S)_{/\FEt^\wequi}$ fixed, we denote the functor $D_{gen}^\mot(\scr X, \ph)$ by 
\[
D_\scr{X}^\mot:  \scr C(S) \rightarrow  \scr C(S);
\] 
\item if $\scr X = \FEt^{\wequi, n}$, we denote $D_\scr{X}^\mot$ by 
\[
D_n^\mot:  \scr C(S) \rightarrow  \scr C(S);
\] 
\item if $G \to \Sigma_n$ is a group homomorphism, and $\scr X \in \PSh(\Sm_S)_{/\FEt^\wequi}$ is given by the composite $B_\et G \to B_\et \Sigma_n \wequi \FEt^{\wequi, n} \to \FEt$, then we denote $D_\scr{X}^\mot$ by 
\[
D_G^\mot:\scr C(S) \rightarrow  \scr C(S);
\]
\item if $BG$ is the constant presheaf, then there is a canonical map $BG \to B\Sigma_n \to B_\et \Sigma_n$, and we put 
\[
D^\mot_{BG} =: D^\naive_{BG}:\scr C(S) \rightarrow  \scr C(S),
\] and in particular \[
D^\mot_{B\Sigma_n} =: D^\naive_n:\scr C(S) \rightarrow  \scr C(S).
\]
\end{enumerate}
\end{definition}

\begin{example} \label{ex:D-gen-colimit}
Since the motivic colimit functor is cocontinuous, it follows from Remark \ref{rmk:N-cocont} that \[ D^\mot_{\scr X}(E) \wequi \colim_{p' \in (\Sm_S)_{\sslash \scr X}} p_\otimes E_U, \] where $\alpha(p') = p: U \to X$.
\end{example}

\begin{example} \label{ex:D-naive-mot}
It follows from Example \ref{ex:constant-presheaves} that $D^\naive_G E$ is given by the ordinary homotopy $G$-orbits of the $G$-action on $E^{\otimes n}$.
Since $D^\mot_{gen}$ is functorial in $\scr X$, we obtain natural transformations $D^\naive_G \Rightarrow D^\mot_G$.
\end{example}

\begin{example} \label{ex:D-mot-prod}
We have $D^\mot_{H_1 \times H_2}(E) \wequi D^\mot_{H_1}(E) \wedge D^\mot_{H_2}(E)$. Indeed this is a consequence of Proposition \ref{prop:M-p-otimes}, noting that $B_\et(H_1 \times H_2) \wequi B_\et(H_1) \times B_\et(H_2)$.
\end{example}

\subsection{Basic properties} We now explain how the construction above interacts with various categorical operations.

\subsubsection{Extended powers and colimits} 

\begin{proposition} \label{prop:D-mot-colim}
\begin{enumerate}
\item The functor $D^\mot_{gen}: \PSh(\Sm_S)_{/\FEt^\wequi} \times \scr C(S) \to \scr C(S)$ preserves colimits in the factor $\PSh(\Sm_S)_{/\FEt^\wequi}$.
\item If each of the functors $f^*, p_\otimes$ preserves colimits of some shape $K$, then $D^\mot_{gen}$ preserves colimits of shape $K$ in the factor $\scr C(S)$.
\end{enumerate}
\end{proposition}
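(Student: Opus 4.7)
The plan is to unwind the definition $D^\mot_{gen} = M \circ N$ and reduce each claim to properties already established for the constituent functors $M$ and $N$.

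For (1), I would observe that both $M$ and $N(\ph, E)$ (for any fixed $E$) are cocontinuous: $M$ by its very construction as a cocontinuous extension, and $N$ in the first variable by Remark~\ref{rmk:N-cocont} (which is immediate from Construction~\ref{cons:N}, since the first argument of $N$ is obtained by applying the cocontinuous extension $\PSh(N_0)$ after the obvious identification of $\PSh(\Sm_S)_{/\FEt^\wequi}$ with a presheaf category on the slice). The composite of cocontinuous functors is cocontinuous, giving the desired conclusion.

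For (2), I would combine Remark~\ref{rmk:N-cocont} with the formula of Remark~\ref{rmk:compute-motivic-colim-explicit} for motivic colimits of objects in $(\Sm_S)_{\sslash \scr C}$. Unwinding, for $(\alpha: \scr X \to \FEt^\wequi, E) \in \PSh(\Sm_S)_{/\FEt^\wequi} \times \scr C(S)$, one has
\[
D^\mot_\scr{X}(E) \simeq \colim_{(p': T \to \scr X) \in (\Sm_S)_{\sslash \scr X}} (T \to S)_\sharp \, \alpha(p')_\otimes \, \alpha(p')^* \, (T \to S)^* E,
\]
where $\alpha(p'): U \to T$ is the finite étale morphism classified by $p'$ and $E_U$ has been expanded via the factorisation $U \to T \to S$. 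The leading functor $(T \to S)_\sharp$ is a left adjoint, hence preserves all colimits; the two pullbacks are instances of $f^*$ and $\alpha(p')_\otimes$ is an instance of $p_\otimes$, all of which preserve $K$-shaped colimits by hypothesis. Thus the integrand preserves $K$-shaped colimits in $E$, and since $\scr C(S)$ is cocomplete the outer colimit over $(\Sm_S)_{\sslash \scr X}$ commutes with $K$-shaped colimits (colimits commute with colimits), so the composite $D_\scr{X}^\mot$ preserves $K$-shaped colimits in $E$.

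No serious obstacle is expected; the argument is essentially bookkeeping. The only minor subtlety is making sure that the expansion $E_U \simeq \alpha(p')^*(T \to S)^*E$ is written cleanly enough that the four functors whose colimit-preservation needs to be checked are visible, so that the composite and the exchange of the outer colimit with $K$-shaped colimits both go through unambiguously.
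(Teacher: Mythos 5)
Your proposal is correct and follows the paper's own proof essentially verbatim: part (1) is the composite of the cocontinuous $M$ with $N(\ph,E)$, cocontinuous in the first variable by Remark~\ref{rmk:N-cocont}, and part (2) unwinds the colimit formula of Example~\ref{ex:D-gen-colimit} and uses that $f_\sharp$ is a left adjoint, the hypotheses on $f^*$ and $p_\otimes$, and commutation of colimits. Your explicit expansion $E_U \simeq \alpha(p')^*(T\to S)^*E$ just makes visible what the paper leaves implicit.
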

\begin{proof}
(1) Clear from cocontinuity of motivic colimits and Remark  \ref{rmk:N-cocont}.

(2) Since colimits commute, this follows from Example \ref{ex:D-gen-colimit}.
\end{proof}

\begin{example}
In each of the cases from Example \ref{ex:changing-C}, the functors $p_\otimes, f^*$ preserve sifted colimits, and hence so does $D^\mot_\scr{X}$.
\end{example}

\begin{example} \label{ex:fet-decomp}
If $\scr C: \Sm_S^\op \to \widehat{\Cat}_\infty$ preserves finite products (e.g. in each of the cases from Example \ref{ex:changing-C}), we have
\[ D^\mot_{\FEt^\wequi}(E) \wequi \bigvee_{n \ge 0} D^\mot_n(E). \]
Indeed $\coprod_{n \ge 0} \FEt^{\wequi, n} \to \FEt^\wequi$ is an $L_\Sigma$-local equivalence (see Corollary~\ref{cor:stk-fet}) and $M$ factors through $L_\Sigma$-local equivalences by Example \ref{ex:M-P-Sigma}.
\end{example}

\subsubsection{Changing $\scr C$} Often, we consider two functors $\scr C$ and $\scr D$ out of $\Span(\Sm_S, \all, \fet)$ and a transformation $\scr C \Rightarrow \scr D$ between them. For example $\scr C = \SH$ and $\scr D = \DM$ and the transformation given by associating to a motivic spectrum its motive, i.e., the free $H\Z$-module on it. Other examples include the localization functor with respect to some topology such $L_{\et}:\SH \rightarrow \SH_{\et}$.

\begin{proposition} \label{prop:Dmot-changing-C}
\NB{Start more generally with $\eta: \Span_{\sslash \scr C} \to \Span_{\sslash \scr D}$?}
Suppose given $\eta: \scr C^\otimes \to \scr D^\otimes \in \Fun(\Span(\Sm_S, \all, \fet), \widehat{\Cat}_\infty)$.
\begin{enumerate}
\item There is a canonical natural transformation
\[ ex_{N \eta}: \bar{\eta} \circ N^\scr C \Rightarrow N^\scr{D} \circ (\ph, \eta_S): \PSh(\Sm_S)_{/\FEt^\wequi} \times \scr C(S) \to \PSh(\Sm_S)_{\sslash \scr D}. \]
\item If the assumptions of Proposition \ref{prop:changing-C} (for $ex_{M\eta}$ to be an isomorphism) are satisfied, then there is an induced natural transformation \[ex_{D^\mot \eta}: \eta_S D^\mot_{gen} \Rightarrow D^\mot_{gen}(\ph, \eta_s): \PSh(\Sm_S)_{/\FEt^\wequi} \times \scr C(S) \to \scr D(S).\]
\item Under the assumptions of (2), the transformation $ex_{D^\mot \eta}$ is an isomorphism.
\end{enumerate}
\end{proposition}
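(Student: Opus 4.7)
The plan is to exploit the fact that both $N$ and $M$ are built in a highly structured way from $\scr C^\otimes$, so that $\eta$ induces the required compatibility transformations essentially formally.

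For part (1), I would trace through Construction~\ref{cons:N}. The transformation $\eta \in \Fun(\Span(\Sm_S, \all, \fet), \widehat{\Cat}_\infty)$ corresponds under the Grothendieck construction to a morphism of cartesian fibrations over $\Span(\Sm_S, \all, \fet)$, which restricts along $\Sm_S^\op \hookrightarrow \Span(\Sm_S, \all, \fet)$ to a functor $\bar\eta: (\Sm_S)_{\sslash \scr C} \to (\Sm_S)_{\sslash \scr D}$ preserving cartesian edges. Post-composition with $\bar\eta$ yields a map of cartesian section categories fitting into the square
\begin{equation*}
\begin{CD}
\scr C(S) @>N_0^\dagger>> \Fun_{\Sm_S}^{\cart}((\Sm_S)_{\sslash \FEt^\wequi}, (\Sm_S)_{\sslash \scr C}) \\
@V\eta_S VV @VV\bar\eta \circ - V \\
\scr D(S) @>N_0^\dagger>> \Fun_{\Sm_S}^{\cart}((\Sm_S)_{\sslash \FEt^\wequi}, (\Sm_S)_{\sslash \scr D}).
\end{CD}
\end{equation*}
This commutes up to canonical equivalence by naturality of Lemma~\ref{lemm:2-yoneda-bs} in the target cartesian fibration (concretely: both the Yoneda-equivalence identification and the restriction-to-$\Sm_S$ map are natural in $\scr E \in \Cart_{\Span}$). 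Passing to the adjoint description yields a canonical equivalence $\bar\eta \circ N_0^{\scr C} \wequi N_0^{\scr D} \circ (\id, \eta_S)$, and then cocontinuously extending gives the desired $ex_{N\eta}$, which by construction is an equivalence.

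For part (2), I would invoke Proposition~\ref{prop:changing-C} to obtain the exchange transformation $\Ex_{M\eta}: M^{\scr D} \bar\eta \Rightarrow \eta_S M^{\scr C}$, which under the stated hypotheses is an equivalence. Then define $ex_{D^\mot \eta}$ to be the composite
\[
\eta_S M^{\scr C} N^{\scr C} \xleftarrow{\Ex_{M\eta} \cdot N^{\scr C}} M^{\scr D} \bar\eta N^{\scr C} \xrightarrow{M^{\scr D} \cdot ex_{N\eta}} M^{\scr D} N^{\scr D}(\ph, \eta_S),
\]
with the first arrow inverted. For (3), both $ex_{N\eta}$ (always, by part~(1)) and $\Ex_{M\eta}$ (under the hypotheses of (2)) are equivalences, so the composite $ex_{D^\mot \eta}$ is too.

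The main point requiring care is step~(1): checking that the square above really commutes and that the resulting $ex_{N\eta}$ is indeed an equivalence. This is where the $(\infty,2)$-categorical coherence implicit in Construction~\ref{cons:N} gets exercised, and one must verify that naturality of Lemma~\ref{lemm:2-yoneda-bs} in the fibration variable suffices to produce a true natural transformation (not merely a pointwise compatibility) after taking adjoints and cocontinuously extending. Parts~(2) and~(3) are then largely formal, combining the output of~(1) with the already-established Proposition~\ref{prop:changing-C}.
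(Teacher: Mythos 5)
Your proof is correct, and for parts (1) and (2) it follows the paper's route (the paper's proof of (1) is just the one-line observation that every step of Construction~\ref{cons:N} is natural in $\scr C$, which your commuting square with the $2$-Yoneda equivalence and the restriction functor spells out; part (2) is the identical span with $\Ex_{M\eta}$ inverted). Where you genuinely diverge is part (3). The paper does \emph{not} record that $ex_{N\eta}$ is invertible; instead it proves (3) directly from the colimit formula of Example~\ref{ex:D-gen-colimit}, using that $\eta$ commutes with $p_\otimes$, $f^*$ (and, under the hypotheses, with $f_\sharp$) and that $\eta_S$ preserves colimits. You instead upgrade (1) to the statement that $ex_{N\eta}$ is an equivalence — which is true, since $\eta$ is a genuine transformation of functors out of $\Span(\Sm_S,\all,\fet)$ and hence coherently intertwines $f^*$ and $p_\otimes$, so $\bar\eta\circ N_0^{\scr C}$ and $N_0^{\scr D}\circ(\id,\eta_S)$ agree on representables and therefore after cocontinuous extension in the first variable — and then (3) follows because both legs of the span are equivalences. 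This is arguably cleaner and parallels what the paper itself does in the analogous Proposition~\ref{prop:D-mot-changing-S}(1), where the $N$-level transformation is asserted to be an isomorphism. The only caveat is the one you flag yourself: the coherence needed to make the square a genuine commutative square of functors (not a pointwise identification) must be supplied by the naturality of evaluation at $(S,\id)$ in the fibration variable; this is at the same level of rigor as the paper's own one-line argument, so it is not a gap relative to the paper.
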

\begin{proof}
(1) All steps of Construction \ref{cons:N} are natural in $\scr C$, whence we obtain the desired transformation.

(2) By Proposition \ref{prop:changing-C} and (1) we have a span of natural transformations
\[ \eta_S D^\mot_{gen} = \eta_S M^\scr{C} N^\scr{C} \xLeftarrow{ex_{M\eta}} M^\scr{D} \bar{\eta} N^\scr{C} \xRightarrow{ex_{N\eta}} M^\scr{D} N^\scr{D} \circ (\ph, \eta_S) = D^\mot_{gen} \circ (\ph, \eta_S) \]
and the first one is a natural isomorphism. This constructs $ex_{D^\mot \eta}$.

(3) Since $\eta$ commutes with $p_\otimes, f^*$ and $\eta_S$ preserves colimits, this follows from Example \ref{ex:D-gen-colimit}.
\end{proof}

\begin{example} \label{ex:Dmot-commutes-with-suspension}
In each of the cases from Example \ref{ex:changing-C}, the functor $\eta_S$ commutes with the functor $D^\mot_{gen}$.
\end{example}

\begin{example} \label{ex:Dmot-betti-realisation}
Let $r_\C: \SH(\C) \to \SH$ denote the Betti realisation. Then $r_\C D^\mot_G(E) \wequi D_G(r_\C E)$. Indeed since $r_\C$ factors through $L_\et: \SH(\C) \to \SH^\et(\C)$, by Proposition \ref{prop:Dmot-changing-C} it suffices to deal with $E \in \SH^\et(\C)$. But then $BG \to L_\et BG \wequi B_\et G$ is an étale-local weak equivalence, so \[ D^\mot_G(E) = D^\mot_{B_\et G}(E) \wequi D^\mot_{BG}(E) \wequi D^\naive_G(E), \] by Corollary \ref{corr:M-local-equiv} and Example \ref{ex:constant-presheaves}. Since $r_\C$ is symmetric monoidal and preserves colimits, it commutes with $D^\naive_G$ as needed. The same argument applies to $r_\C: \Spc(\C)_\pt \to \Spc$ and $r_\C: \Spc(\C) \to \Spc$.
\end{example}

\subsubsection{Changing $S$} We begin to address the interaction of motivic colimits with base change.

\begin{proposition} \label{prop:D-mot-changing-S}\NB{Assumptions not quite minimal.}
Suppose given $\scr C^\otimes: \Span(\Sch_S, \all, \fet) \to \widehat{\Cat}_\infty$ and $f: T \to S \in \Sch_S$.
\begin{enumerate}
\item There is a canonical natural isomorphism \[ ex_N^*: N_T \circ (f^*, f^*) \Rightarrow f^* N_S \in \Fun(\PSh(\Sm_S)_{/\FEt^\wequi} \times \scr C(S), \PSh(\Sm_T)_{\sslash \scr C_T}). \]
\item If the restriction of $\scr C$ to $\Sch_S^\op$ satisfies the assumptions of Proposition \ref{prop:changing-S}(1), then there is an induced natural transformation $ex_{D^\mot}^*: D^\mot_{gen} \circ (f^*, f^*) \Rightarrow f^* D^\mot_{gen}$.
\item If the restriction of $\scr C$ to $\Sch_S^\op$ satisfies the assumptions of Proposition \ref{prop:changing-S}(2), then $ex_{D^\mot}^*$ is an isomorphism.
\end{enumerate}
\end{proposition}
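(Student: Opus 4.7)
The plan follows the architecture of Proposition \ref{prop:Dmot-changing-C}, but with the base change $f^\ast$ playing the role of the change-of-category transformation $\eta$.

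For part (1), the strategy is to promote Construction \ref{cons:N} to a natural construction in the base. The morphism $f: T \to S$ induces a pullback functor $\Span(\Sm_T, \fet, \all) \to \Span(\Sm_S, \fet, \all)$ (since both $\fet$ and $\all$ are stable under pullback), and by hypothesis $\scr C^\otimes$ is defined on all of $\Sch$, so the two restrictions $\scr C_S^\otimes$ and $\scr C_T^\otimes$ sit in a compatible diagram. Applying Lemma \ref{lemm:2-yoneda-bs} at both bases and using the commutativity of these diagrams, one obtains an identification $f^\ast \circ N_0^S \simeq N_0^T \circ (f^\ast, f^\ast)$ on $(\Sm_S)_{\sslash \FEt^\wequi} \times \scr C(S)$. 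Passing to cocontinuous extensions (using that $f^\ast$ on presheaves preserves colimits) yields $ex_N^\ast$. Equivalently, and more concretely, Remark \ref{rmk:N-cocont} expresses both $f^\ast N_S(\alpha, E)$ and $N_T(f^\ast \alpha, f^\ast E)$ as colimits of $(T', p_\otimes(E|_U))$ indexed over the same category, and the identification reduces to the statement that $p_\otimes$ commutes with pullback along $f$, which is exactly the functoriality of $\scr C^\otimes$ on spans.

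For part (2), we define $ex_{D^\mot}^\ast$ as the composite
\[
M_T \circ N_T \circ (f^\ast, f^\ast) \xRightarrow{M_T(ex_N^\ast)} M_T \circ f^\ast \circ N_S \xRightarrow{ex_M^\ast \cdot N_S} f^\ast \circ M_S \circ N_S,
\]
where the first arrow is the equivalence from (1) and the second is the exchange transformation from Proposition \ref{prop:changing-S}(1), which exists under the assumptions made on $\scr C$. For part (3), the smooth base change hypothesis guarantees via Proposition \ref{prop:changing-S}(2) that $ex_M^\ast$ is an equivalence on all of $\PSh((\Sm_S)_{\sslash \scr C})$, so in particular on the image of $N_S$; combined with (1), the whole composite is an equivalence.

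The main obstacle is really part (1): making precise the base-change naturality of the rather intricate Construction \ref{cons:N}, which is built via the $2$-Yoneda lemma on a span category. Once the naturality of the Grothendieck construction and of the span-category formalism under pullback along $f$ is in hand, parts (2) and (3) are essentially formal bookkeeping, reducing entirely to the corresponding statements for $M$ established in Proposition \ref{prop:changing-S}.
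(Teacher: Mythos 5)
Your proposal is correct and follows essentially the same route as the paper: part (1) by exhibiting the base-change compatibility of Construction \ref{cons:N} (the key input being that the exchange $p_\otimes$--pullback equivalence is encoded in the functoriality of $\scr C^\otimes$ on the span category), and parts (2)--(3) by the composite $M_T N_T(f^*,f^*) \Rightarrow M_T f^* N_S \Rightarrow f^* M_S N_S$ together with Proposition \ref{prop:changing-S}. One small imprecision in (1): the functor between span categories induced by $f$ is the forgetful functor $\Sch_T \to \Sch_S$ (not a ``pullback functor'' in the direction you wrote), and since $\Sm_T \to \Sm_S$ does not exist for non-smooth $f$, the paper first enlarges from $\Sm$ to $\Sch$ before running this argument --- a step worth making explicit.
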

\begin{proof}
(1)\tombubble{This is messier than I would like.} By taking a cocontinuous extension, it suffices to construct the transformation of functors $(\Sm_S)_{/\FEt} \times \scr C(S) \to (\Sm_T)_{\sslash \scr C}$.
It can be informally described as \[ ((X, p \in \FEt_X), E \in \scr C(S)) \mapsto ((p  \times_S T)_\otimes(E|_{X \times_S T}) \xrightarrow{\wequi} f^*p_\otimes(E|_X)). \] In particular the resulting transformation is a natural isomorphism, as claimed.
To do this, we may as well construct the transformation of functors on the larger categories $(\Sch_S)_{/\FEt} \times \scr C(S) \to (\Sch_T)_{\sslash \scr C}$.
The functor $f^*: (\Sch_S)_{\sslash \scr C} \to (\Sch_T)_{\sslash \scr C}$ has a left adjoint $f_\sharp$ (see Construction \ref{f*-construct}).
It is thus enough to construct \[ f_\sharp N_0^T f^* \Rightarrow N^0_S: (\Sch_S)_{/\FEt} \times \scr C(S) \to (\Sch_S)_{\sslash \scr C}, \] or equivalently by adjunction a transformation of functors \[ \scr C(S) \wequi \Fun_{\Span(\Sch_S, \fet, \all)}^\cart(R_S, (\Sch_S)_{\sslash \scr C}) \to \Fun((\Sch_S)_{\sslash \FEt^\wequi}, (\Sch_S)_{\sslash \scr C}). \]
The target functor is just $N_0^\dagger$, i.e. restriction along $\Sch_S \to \Span(\Sch_S, \fet, \all)$ followed by the natural inclusion.
Unwinding the definitions, the source functor is obtained by first restricting along $\Span(\Sch_T, \fet, \all) \to \Span(\Sch_S, \fet, \all)$, then restricting along $\Sch_T \to \Span(\Sch_T, \fet, \all)$, postcomposing with $f_\sharp: (\Sch_T)_{\sslash \scr C} \to (\Sch_S)_{\sslash \scr C}$ and precomposing with $f^*: (\Sch_S)_{\sslash \FEt^\wequi} \to (\Sch_T)_{\sslash \FEt^\wequi}$.
We may symbolically depict this as \[ \Fun_{\Span(\Sch_S, \fet, \all)}^\cart(R_S, (\Sch_S)_{\sslash \scr C}) \ni F \mapsto f_\sharp \circ F|_{\Sch_T} \circ f^*. \]
As can be verified on the level of simplicial sets, this is the same as \[ F \mapsto F \circ f_\sharp \circ f^*. \]
Thus the desired transformation is obtained by composition with the unit transformation of \[ f_\sharp: (\Sch_T)_{\sslash \FEt^\wequi} \adj (\Sch_S)_{\sslash \FEt^\wequi}: f^*. \]

(2, 3) We have the composite of natural transformations
\[ D^\mot_{gen} \circ (f^*, f^*) \wequi M_T N_T \circ (f^*, f^*) \xRightarrow{M_T ex_N^*} M_T f^* N_S \xRightarrow{ex_M^* N_S} f^* M_S N_S \wequi f^* D^\mot_{gen}, \]
where the first one is obtained by (1) and is an isomorphism, and the second one is obtained by Proposition \ref{prop:changing-S}(1). We have thus obtained $ex_{D^\mot}^*$. It is an isomorphism as soon as $ex_M^*$ is, which is ensured by Proposition \ref{prop:changing-S}(2).
\end{proof}

\begin{example}\label{exam:dmot-f}
In each of the cases from Example \ref{ex:changing-C}, $D^\mot_{gen}$ commutes with arbitrary base change in the sense that $f^* D^\mot_{gen}(\scr X, E) \wequi D^\mot_{gen}(f^* \scr X, f^* E)$ for any $f, \scr X, E$.
\end{example}

Suppose that we have a morphism $f: T \to S$. The equivalence $f^*D^{\mot}_n(E) \simeq D^\mot_{gen}(f^*B_{\et}\Sigma_n, f^* E)$ from Example~\ref{exam:dmot-f}, and the comparison map of presheaves 
\[
f^*B_{\et}\Sigma_n|_{\Sm_S} \rightarrow B_{\et}\Sigma_n|_{\Sm_T}
\] defines a comparison morphism
\begin{equation}\label{eq:powers-compare}
f^*D^{\mot}_n(E) \rightarrow D^{\mot}_n(f^*E).
\end{equation}
It is not clear that this is an equivalence, because formation of $B_\et \Sigma_n$ need not commute with $f^*$ in general.
Using Example \ref{ex:M-tau-local} we see that this problem goes away as soon as $f^* B_\et \Sigma_n \to B_\et \Sigma_n$ is a local equivalence in an appropriate topology. 


\begin{remark} \label{rmk:Dn-smooth-base-change}
If $f: T \to S$ is smooth, then the map $\alpha: f^* B_\et G \to B_\et f^* G \in \PSh(\Sm_T)$ from Corollary \ref{corr:Bet-stable-by-base-change} is a plain equivalence (as in established in the proof of that result). In particular $D_n^\mot$ etc. commute with smooth base change as soon as Proposition \ref{prop:changing-S} applies.
\end{remark}

\begin{example} \label{ex:Dn-stable-base-change}
Suppose that $\scr C$ is a sheaf in the Zariski topology. Then Corollary \ref{corr:Bet-stable-by-base-change}, Example \ref{ex:M-tau-local} and Proposition \ref{prop:D-mot-changing-S} imply that $D_n^\mot$, $D_G^\mot$ and $D_{\FEt^\wequi}^\mot$ commute with base change.
\end{example}

\begin{example} \label{ex:Dn-stable-base-change-specific}
Example \ref{ex:Dn-stable-base-change} applies in particular to each of the cases from Example \ref{ex:changing-C}.
\end{example}

\subsection{Example: the case of étale sheaves}
In this subsection we return to the setting of Section \ref{subsec:M-PSh}. In other words we put $\scr C^\otimes(X) = \PSh(\Sm_X)$. For $p: X \to Y$ finite étale, the norm $p_\otimes: \PSh(\Sm_X) \to \PSh(\Sm_Y)$ is just given by $p_*$. In particular, both norm and pullback preserve arbitrary colimits. We now compute $D^\mot_G(\scr X)$ for $\scr X \in \PSh(\Sm_S)$, and $G \to \Sigma_n$ some finite group. Since $D^\mot$ preserves arbitrary colimits in our situation (by Proposition \ref{prop:D-mot-colim}), this reduces to the case where $\scr X$ is \emph{representable}. Since representable presheaves are in fact étale sheaves, the following result essentially achieves what we want.

\begin{proposition} \label{prop:D-mot-etale}
Let $\scr X \in \PSh(\Sm_S)$ be an étale sheaf and $G \to \Sigma_n$ a finite group. Then there is a canonical equivalence
\[ D^\mot_G(\scr X) \wequi (\scr X^n)_{hG}^\et. \]
\end{proposition}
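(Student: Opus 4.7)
The plan is to establish that (a) $D^\mot_G(\scr X)$ is automatically an étale sheaf, and (b) the natural transformation $\eta \colon (\scr X^n)_{hG} = D^\naive_G(\scr X) \to D^\mot_G(\scr X)$ coming from the sheafification map $BG \to B_\et G$ (via functoriality of $D^\mot_{gen}$ in the first variable, together with Example \ref{ex:D-naive-mot}) becomes an equivalence after étale sheafification. Combining (a) and (b) yields $(\scr X^n)^\et_{hG} \xrightarrow{\sim} D^\mot_G(\scr X)$, as desired.

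The starting computation is Corollary \ref{corr:M-compute} together with the formula for the fundamental diagram $N$ from Remark \ref{rmk:N-cocont}, which gives, for $U \in \Sm_S$,
\[ D^\mot_G(\scr X)(U) \simeq \colim_{P \in B_\et G(U)} \scr X(V_P), \]
where $V_P = P \times^G \{1,\ldots,n\} \to U$ is the degree-$n$ finite étale cover associated to the $G$-torsor $P$ via $G \to \Sigma_n$ and Lemma \ref{lem:stk}. For (b), I compute étale stalks at geometric points. At a geometric point $\bar s \to S$ with strict henselization $R^{sh}$, every étale $G$-torsor on $\Spec R^{sh}$ is trivial, so $B_\et G(\Spec R^{sh}) \simeq BG$ as groupoids; for a trivial torsor one has $V_P \simeq \bigsqcup_n \Spec R^{sh}$, whence $\scr X(V_P) \simeq \scr X(\Spec R^{sh})^n$ with the $G$-action by permutation. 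The colimit formula then specializes to $(\scr X(\Spec R^{sh})^n)_{hG}$, which is also the stalk of $(\scr X^n)_{hG}$ at $\bar s$, so $\eta$ induces an equivalence on étale stalks.

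For (a), I reinterpret the formula in the $\infty$-topos $\Shv_\et(\Sm_S)$: the universal degree-$n$ cover $\scr V \to B_\et G$ (from the equivalence $B_\et \Sigma_n \simeq \FEt^{\wequi, n}$ of Lemma \ref{lem:stk}) is a finite étale morphism of étale sheaves, and the internal mapping object $\iHom_{/B_\et G}(\scr V, \scr X \times B_\et G) \in \Shv_\et(\Sm_S)_{/B_\et G}$ has fiber $\scr X(V_P)$ over the point $(U, P)$. Its total space, obtained by pushing forward along $B_\et G \to *$, is an étale sheaf whose sections over $U$ coincide with $\colim_{P \in B_\et G(U)} \scr X(V_P)$, identifying $D^\mot_G(\scr X)$ with an étale sheaf and establishing (a). The main obstacle is exactly this last identification: passing from the a priori $\Spc$-valued colimit formula of Corollary \ref{corr:M-compute} to an $\infty$-topos-theoretic total-space construction requires the general fact that colimits of groupoid-indexed diagrams in an $\infty$-topos are computed as total spaces of the associated left fibrations, applied to $\iHom_{/B_\et G}(\scr V, \scr X \times B_\et G) \to B_\et G$.
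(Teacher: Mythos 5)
Your architecture is close to the paper's own: your step (a), identifying $D^\mot_G(\scr X)$ with the total space over $B_\et G$ of the left fibration with fibres $\scr X(V_P)$, is essentially the paper's identification of the $U$-sections of $D^\mot_G(\scr X)$ with $\Map(U,\scr X^n\sslash G)$ via the quotient-stack formula, and the colimit formula you start from is exactly the one the paper extracts from Corollary \ref{corr:M-compute}. (One slip: the total space is obtained by the \emph{left} adjoint to pullback along $B_\et G\to *$, i.e.\ by forgetting the structure map to $B_\et G$; ``pushing forward'' along $B_\et G\to *$ would compute sections, not the colimit.)

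The genuine gap is in step (b). Showing that $\eta$ is an equivalence on étale stalks only shows that $L_\et\eta$ is $\infty$-connective. For the big étale $\infty$-topos of $\Sm_S$ over an arbitrary base this does not imply that $L_\et\eta$ is an equivalence: that topos need not be hypercomplete, and $\scr X$ is an arbitrary sheaf of \emph{spaces}, so no truncatedness hypothesis is available to rescue the implication. The fix — and what the paper does — is to argue by descent rather than by stalks. Both $\Map^G(\tilde E,\scr X^n)$ and $\scr X(V_P)=\Map(E,\scr X)$ are étale sheaves as functors of $U$ (because $\scr X$ is), so the comparison map between them may be checked after passing to an étale cover of $U$ splitting the torsor; there it becomes $\Map^G(U'\times G,\scr X^n)\to\Map(U'\times\ul{n},\scr X)$, which is an equivalence. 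Equivalently, once your (a) is in place, pull your left fibration back along the effective epimorphism $*\to B_\et G$: its fibre is $\scr X^n$ with the $G$-action, and descent internal to $\Shv_\et(\Sm_S)$ (using $L_\et BG\wequi B_\et G$) identifies the total space with the internal homotopy orbits $(\scr X^n)_{hG}^\et$. This replaces the stalk computation entirely and closes the gap.
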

\begin{proof}
By the right hand side we mean the following: we take the presheaf $\scr X^n \in \PSh(\Sm_S)$ which comes equipped with the $G$ act on it. Then we take homotopy orbits, and take associated étale sheaf; this models homotopy orbits internal to an appropriate $\infty$-topos. Let $U \in \Sm_S$. We shall compute the sections over $U$ of both the left and the right hand side and show that they are equivalent, naturally in $U$. 

For the left hand side, we use Corollary \ref{corr:M-compute} to obtain
\begin{align*}
D^\mot_G(\scr X)(U) &\wequi \colim_{\tilde p \in B_\et G(U)} N_\scr{X}(p)(U) \\
                    &\wequi \colim_{\tilde p} (p_\otimes \scr X_E)(U) \\
                    &\wequi \colim_{\tilde p} (p_* \scr X_E)(U) \\
                    &\wequi \colim_{\tilde p} \Map(E, \scr X).
\end{align*}
Here we let $p: E \to U$ be the associated finite étale scheme of rank $n$ to the principal $G$-bundle $\tilde{p}$.

For the right hand side, we work in the $\infty$-topos $L_\et \PSh(\Sm_S)$ of étale sheaves on $\Sm_S$. Let $\scr X^n \sslash G \in L_\et \PSh(\Sm_S)$ denote the homotopy orbits computed in this topos. By the construction of a quotient stack (see \cite[Tag 0370]{stacks} for the 1-truncated case, and \cite[Section 3.2]{principal-nss} for a discussion on the formation of quotients\NB{better reference?}) we have
\[ \Map(U, \scr X^n \sslash G) \wequi \colim_{\tilde p: \tilde{E} \to U \in B_\et G(U)} \Map^G(\tilde E, \scr X^n). \]
After an identification between the indexing diagrams of the two colimits using Lemma~\ref{lem:stk}, it thus remains to construct a natural equivalence $\Map^G(\tilde E, \scr X^n) \xrightarrow{\alpha_*} \Map(E, \scr X)$. We do this as follows. Let $\ul{n}$ denote the set $\{1, \dots, n\}$ with its canonical $G$-action. The map $\alpha: \scr X^n \times \ul{n} \to \scr X, ((x_1, \dots, x_n), i) \mapsto x_i$ is $\Sigma_n$-equivariant (for $\Sigma_n$ acting trivially on the right hand side), and hence $G$-equivariant. Given $\beta \in \Map^G(\tilde E, \scr X^n)$, we have $\alpha_*(\beta) := \alpha \circ (\beta \times \ul{n}) \in \Map^G(\tilde E \times \ul{n}, \scr X) \wequi \Map(\tilde E \times_G \ul{n}, \scr X)$. Noting that $E = \tilde E \times_G \ul{n}$, $\alpha_*$ has the correct shape. Clearly $\alpha_*$ is natural. In order to prove that $\alpha_*$ is an equivalence, we use that both sides are (étale) sheaves as functors of $U$ (since $\scr X$ is), and hence we may assume that $\tilde E$ is split. In this case, the map is $\Map^G(U' \times G, \scr X^n) \to \Map(U' \times \ul{n}, \scr X)$, which is an equivalence.
\end{proof}

\NB{This result allows us to describe the functors $D_G^\mot: \Spc(S) \to \Spc(S)$ and, via a slight variant, $D_G^\mot: \Spc(S)_\pt \to \Spc(S)_\pt$. E.g. the latter is induced by the composite $\SmQP_+ \to L_\et \PSh(\Sm_S)_\pt \xrightarrow{\bullet^{\wedge n}\sslash G}  L_\et \PSh(\Sm_S)_\pt \to \Spc(S)_\pt$ by sifted-cocontinuity.}
\NB{Morel-Voevodsky \cite[Section 4.2]{A1-homotopy-theory} almost prove the following result: if $X$ is a smooth scheme with $G$-action, then $X \sslash G$ (étale homotopy orbits) is motivically equivalent to $E_{gm} G \times^{\et}_G X$. This allows for a more geometric identification of the functors above.}

\subsection{Further properties} We now expand on more subtle properties of the motivic extended and generalized power constructions, with applications to power operations in mind. 

\subsubsection{Iterated extended powers}

A key property useful in the computation with power operations are the so-called \emph{Adem relations}. They are analogous to the Adem relations in the more familiar Steenrod algebra. The Adem relations, as explained in the context of spaces by Dyer--Lashof \cite[Section 1]{dyer-lashof}, come from studying the relationship between iterated extended powers and generalized powers for wreath products of various groups. We provide the motivic analogs in this subsection. More precisely, we compute expressions like $D^\mot_n D^\mot_m (E)$ in terms of generalized powers for wreath products of groups.

Let us briefly recall how wreath products work. In general, if $G$ is a group acting on $\{1, 2, \dots, n\}$ (in other words we are provided with a homomorphism $G \to \Sigma_n$) and $H$ is any group, then we denote by $H \wr G$ the semidirect product $H^n \rtimes G$. In particular we have a split exact sequence of groups \begin{equation} \label{eq:wr-prod-exact-seq} 1 \to H^n \to H \wr G \to G \to 1. \end{equation} If $H$ acts on $\{1, \dots, m\}$, then $H \wr G$ acts on $\{1, \dots, n\} \times \{1, \dots, m\}$.

To make sense of the next lemma, recall that essentially by definition $B_\et G \in \PSh(\Sm_S)$ is the restriction of the quotient stack $[*/G]$ to $\Sm_S$; in other words, $B_\et G$ classifies principal $G$-bundles on smooth $S$-schemes. A homomorphism $f: G \to \Sigma_n$ induces $B_\et G \to B_\et \Sigma_n \to \FEt^\wequi$. Thus, for each principal $G$-bundle, we obtain an \emph{underlying finite étale scheme of rank $n$} (depending on $f$) via Lemma~\ref{lem:stk}. Let us also recall that if we have an exact sequence of groups $1 \rightarrow K \rightarrow G \rightarrow H \rightarrow 1$, then we have a fiber sequence in $\PSh(\Sm_S)$:

\begin{equation}\label{eq:fiber}
B_{\et}K \rightarrow B_{\et}G \rightarrow B_{\et}H,
\end{equation}
obtained by pullback of the map classifying the trivial $H$-bundle $S \rightarrow B_{\et}H$ along $B_{\et}G \rightarrow B_{\et}H$.

\begin{lemma} \label{lemm:bundle-technical}
Let $G$ be a finite group acting on $\{1, 2, \dots, n\}$ and $H$ any finite group. Let $f: X \to S \in \Sm_S$ and $\tilde{p}: X \to B_\et G$ classify a principal $G$-bundle with underlying finite étale scheme $p: F \to X$. Then there is a canonical cartesian square
\begin{equation} \label{eq:B-wr-cart-square}
\begin{CD}
L_\Zar f_\sharp p_*p^*B_\et H @>>> B_\et H \wr G \\
@V{\gamma}VV                        @VVV      \\
X                @>{\tilde{p}}>> B_\et G.
\end{CD}
\end{equation}
\end{lemma}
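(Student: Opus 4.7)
The plan is to reduce the statement of the cartesian square to a direct comparison of points. The split short exact sequence~\eqref{eq:wr-prod-exact-seq} produces, via~\eqref{eq:fiber}, a fiber sequence $B_\et H^n \to B_\et(H \wr G) \to B_\et G$ in $\PSh(\Sm_S)$, and pulling back along $\tilde p\colon X \to B_\et G$ one obtains a map $\mathcal T := X \times_{B_\et G} B_\et(H \wr G) \to X$ whose fibers are equivalent to $B_\et H^n \wequi (B_\et H)^n$, twisted by the $G$-bundle that $\tilde p$ classifies. The goal is then to identify $\mathcal T$, as an object of $\PSh(\Sm_S)$ over $X$, with $L_\Zar f_\sharp p_* p^* B_\et H$.

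Next, I compute sections. For $U \in \Sm_S$ and a morphism $\phi\colon U \to X$, a lift of the pulled-back $G$-bundle $P_U := \phi^* P$ (where $P \to X$ is the principal $G$-bundle classified by $\tilde p$) to an $H \wr G$-bundle on $U$ is classified, via descent along the étale cover $P_U \to U$, by a $G$-equivariant $H^n$-torsor on $P_U$. By the universal property of associated bundles, this is the same data as an $H$-torsor on the associated rank-$n$ finite étale cover $P_U \times^G \{1,\dots,n\} \wequi \phi^* F \wequi U \times_X F$. On the other hand, unwinding the definition of $p_*$ one has
\[
(p_* p^* B_\et H)(U \to X) \wequi B_\et H(U \times_X F),
\]
so the data parametrizing a section of $\mathcal T \to X$ over $\phi$ matches the data parametrizing a section of $p_* p^* B_\et H$ over $\phi$, naturally in $U$ and $\phi$.

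Having matched sections, it remains to assemble this into the cartesian square over $\Sm_S$. The left adjoint $f_\sharp\colon \PSh(\Sm_X) \to \PSh(\Sm_S)$ to restriction realizes an object of $\PSh(\Sm_X)$ as a presheaf on $\Sm_S$ lying tautologically over $X = f_\sharp *$. After Zariski sheafification the pointwise identification of the previous paragraph promotes to an equivalence $L_\Zar f_\sharp p_* p^* B_\et H \wequi \mathcal T$ over $X$; combining this with the map $\mathcal T \to B_\et(H \wr G)$ from the defining pullback yields the square~\eqref{eq:B-wr-cart-square} and exhibits it as cartesian.

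The main obstacle will be the Zariski sheafification step: one needs to verify that $f_\sharp p_* p^* B_\et H$ agrees after sheafification with $\mathcal T$, which is already a Zariski sheaf. This comes down to the fact that a morphism $U \to X$ in $\Sm_S$ need not make $U$ into a smooth $X$-scheme, so the presheaf-level $f_\sharp$ does not literally see the maps to $X$ in the desired way; however Zariski-locally on $U$ any such $\phi$ factors through a smooth $X$-scheme, which forces the comparison map to become an equivalence after Zariski sheafification. This is a standard manipulation of the adjunction $f^* \dashv f_\sharp$ combined with the fact that $B_\et H$, $B_\et G$ and $B_\et(H \wr G)$ are themselves Zariski (indeed étale) sheaves.
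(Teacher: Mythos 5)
Your overall strategy is the same as the paper's: identify the sections of $X \times_{B_\et G} B_\et(H\wr G)$ over $\phi\colon U \to X$ with $H$-torsors on $U \times_X F$ (your descent/associated-bundle argument for this is fine and matches the paper's ``the fiber is the presheaf taking $T$ to $H$-torsors on $F_T$''), and then reconcile this big-topos description with the small-presheaf object $L_\Zar f_\sharp p_* p^* B_\et H$. The gap is in that last reconciliation step, which is where the actual mathematical content of the lemma sits.

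Your justification --- that Zariski-locally any $\phi\colon U \to X$ factors through a smooth $X$-scheme, so the comparison becomes an equivalence after $L_\Zar$ --- does not suffice. (In fact every $\phi$ factors through the smooth $X$-scheme $U \times_S X$ via its graph, with no localization needed, so this observation cannot be the crux.) The point is not to factor the \emph{map} $\phi$ but to factor the \emph{section}: one must show that every $H$-torsor on $U \times_X F$ arises, in the appropriate colimit sense, by pullback from $T \times_X F$ for smooth $X$-schemes $T$ receiving $U$ --- i.e.\ that the presheaf $T \mapsto B_\et H(T\times_X F)$ on $\Aff_X$ is \emph{left Kan extended} from $\SmAff_X$. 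An $H$-torsor on $U \times_X F$ has no a priori reason to extend over $(U\times_S X)\times_X F$, and being an \'etale sheaf does not imply being left Kan extended from smooth affines. The paper supplies exactly this input: $p_*[*/H]$ is a smooth algebraic stack with quasi-affine diagonal (Weil restriction along the finite \'etale $p$ preserves this, cf.\ Lemma \ref{lemm:LKE-primitive}), and such stacks are left Kan extended from smooth affines by \cite[Proposition A.0.4]{EHKSY3}. Without invoking some form of this geometric fact, your ``standard manipulation of the adjunction'' does not close the argument.
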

\begin{proof}\tombubble{This is messier than I would like.}
Let us first prove the equivalent result in $\PSh(\Sch_S)$.
Working directly from the definitions, the fiber $F = X \times_{[*/G]} [*/(H \wr G)]$ is the presheaf over $X$ taking $T \in \Sch_X$ to the groupoid of $H$-torsors on $F_T$.
This is exactly $f_\sharp^{b} p_*p^* B_\et H$, where the superscript $b$ reminds us that we are working in the big topos.\NB{The problem is that $\PSh(\Sm_S)_{/T} \wequi \PSh((\Sm_S)_{/T}) \ne \PSh(\Sm_T)$.}

To prove the actual result, since we are working Zariski locally, we may assume that $S$ and $X$ are affine and work in $\PSh(\SmAff_S)$.
It will be enough to prove that $f_\sharp p_*p^*B_\et H \wequi (f_\sharp^b p_*p^*[*/H])|_{\SmAff_S}$.
This is true when evaluating on $T \in \SmAff_X$.
Since the left hand side is a colimit of such $T$'s, it suffices to prove that the right hand side also is.
In other words it suffices to show that $p_* [*/H] \in \PSh(\Aff_X)$ is a colimit of smooth affine schemes, or equivalently is left Kan extended from $\SmAff_X$.
Via \cite[Proposition A.0.4]{EHKSY3} this follows from the fact that $[*/H]$ is a smooth stack with quasi-affine diagonal and that the Weil restriction $p_*$ preserves such stacks; see the proof of Lemma \ref{lemm:LKE-primitive} for more details.
\end{proof}

The next proposition is our main resulting concerning iterated extended powers. For the rest of this section, we fix a functor \[
\scr C^\otimes: \Span(\Sm_S, \all, \fet) \to \widehat{\Cat}_\infty,
\] satisfying the following properties:
\begin{itemize}
\item Each category $\scr C(X)$ is stable and admits small colimits.
\item $\scr C$ satisfies smooth base change. 
\item $\scr C^\otimes$ satisfies the distributivity law of \cite[Proposition 5.12]{norms}.
\item Each functor $f_\otimes$ preserves sifted colimits.
\end{itemize}
Under these assumptions, Propositions \ref{prop:changing-S} and \ref{prop:M-p-otimes} apply. 

\begin{proposition} \label{prop:Dmot-iterated-wreath}
Let $G \to \Sigma_n$ and $H \to \Sigma_m$ be group homomorphisms, with $G, H$ finite. Let $H \wr G \to \Sigma_{nm}$ correspond to the canonical action of $H \wr G$ on $\{1, \dots, n\} \times \{1, \dots, m\}$. Suppose further that $\scr C$ is a Zariski sheaf. Then there is a canonical equivalence of functors $D^\mot_G \circ D^\mot_H \wequi D^\mot_{H \wr G}$.
\end{proposition}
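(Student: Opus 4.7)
The plan is to express both sides as motivic colimits indexed by the slice category $(\Sm_S)_{\sslash B_\et G}$ and then match them term by term. For the right-hand side, write $B_\et G \wequi \colim_{(X, \tilde p)} X$ in $\PSh(\Sm_S)$ and use stability of pullbacks under colimits in this $\infty$-topos to obtain
\[ B_\et H \wr G \wequi \colim_{(X, \tilde p: X \to B_\et G)} \left( X \times_{B_\et G} B_\et H \wr G \right). \]
By Lemma \ref{lemm:bundle-technical}, the slice identifies, after Zariski sheafification, with $f_\sharp p_* p^* B_\et H$, where $f: X \to S$ is the structure map and $p: F \to X$ is the finite étale scheme underlying $\tilde p$. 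Since $\scr C$ is a Zariski sheaf, $L_\Zar$ may be dropped for the purposes of computing $D^\mot$ (Example \ref{ex:M-tau-local}); combined with cocontinuity of $D^\mot_{gen}$ in the first variable (Proposition \ref{prop:D-mot-colim}(1)), this gives
\[ D^\mot_{H \wr G}(E) \wequi \colim_{(X, \tilde p)} D^\mot_{gen, S}(f_\sharp p_* p^* B_\et H, E). \]

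Next, I would peel off $f_\sharp$ and $p_*$ from the indexing presheaf. Unwinding the colimit formula of Example \ref{ex:D-gen-colimit} together with Proposition \ref{prop:interaction-M-f-sharp}(3) yields $D^\mot_{gen, S}(f_\sharp \scr Y, E) \wequi f_\sharp D^\mot_{gen, X}(\scr Y, f^* E)$, using the identification $(\Sm_S)_{\sslash f_\sharp \scr Y} \wequi (\Sm_X)_{\sslash \scr Y}$; similarly Proposition \ref{prop:M-p-otimes}, together with the analogous identification $(\Sm_X)_{\sslash p_* \scr W} \wequi (\Sm_F)_{\sslash \scr W}$, gives $D^\mot_{gen, X}(p_* \scr W, -) \wequi p_\otimes D^\mot_{gen, F}(\scr W, p^*(-))$. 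Applying both successively produces
\[ D^\mot_{H \wr G}(E) \wequi \colim_{(X, \tilde p)} f_\sharp p_\otimes D^\mot_{gen, F}(p^* B_\et H, p_F^* E). \]

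For the left-hand side, Example \ref{ex:D-gen-colimit} directly gives
\[ D^\mot_G(D^\mot_H(E)) \wequi \colim_{(X, \tilde p)} f_\sharp p_\otimes p_F^* D^\mot_H(E), \]
and smooth base change (Example \ref{ex:Dn-stable-base-change}, available because $\scr C$ is a Zariski sheaf) gives $p_F^* D^\mot_H(E) \wequi D^\mot_{H, F}(p_F^* E) = D^\mot_{gen, F}(p^* B_\et H, p_F^* E)$, matching the previous formula and completing the comparison.

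The main technical point is the second step — peeling off $f_\sharp$ and $p_*$ from the indexing presheaf. The motivic-colimit compatibilities $M_S f_\sharp \wequi f_\sharp M_X$ and $M_X p_\otimes \wequi p_\otimes M_F$ are already in hand, but one still has to track the $\FEt^\wequi$-decoration through the slice-category identifications above; this amounts to a naturality check for the $N$-construction (Construction \ref{cons:N}) with respect to $f_\sharp$ and to the norm structure of $\FEt$, which is routine given the explicit formula in Remark \ref{rmk:N-cocont} but is the step requiring the most careful bookkeeping.
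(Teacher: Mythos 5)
Your proposal follows essentially the same route as the paper: decompose $B_\et(H \wr G)$ over the category of elements of $B_\et G$, identify the fibers via Lemma \ref{lemm:bundle-technical}, and peel off $f_\sharp$ and $p_\otimes$ using Proposition \ref{prop:interaction-M-f-sharp} and Proposition \ref{prop:M-p-otimes} before invoking smooth base change for $D^\mot_H$. The one point you gloss over that the paper treats explicitly is the application of Proposition \ref{prop:M-p-otimes}, which is only stated on the essential image of $\PSh_\Sigma(\SmQP_F)_{/\scr C_F}$; the paper bridges this by replacing the diagram $N_p^0$ with its left Kan extension $N_p'$ from smooth quasi-projective schemes (a Zariski equivalence, harmless since $M$ preserves these by Example \ref{ex:M-tau-local}), and your ``careful bookkeeping'' remark should be understood as requiring exactly this reduction.
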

\begin{proof}
\NB{Make this more formal and less of a mess?}
Let $E \in \scr C(S)$. Consider the diagram $N: B_\et H \wr G \to \scr C$ with $M(N) = D^\mot_{H \wr G}(E)$. Let $q: B_\et H \wr G \to B_\et G$ be the canonical map. For $f: X \to S \in \Sm_S$ and $\tilde{p}: X \to B_\et G$ we have an induced diagram $N_{\tilde{p}}: q^{-1}(\{\tilde p\}) \to B_\et H \wr G \to \scr C$. By Lemma \ref{lemm:bundle-technical}, we have $q^{-1}(\{\tilde p\}) \wequi L_\Zar f_\sharp p_* p^*B_\et H$, where $p: F \to X$ is the finite étale scheme (of rank $n$) corresponding to $\tilde{p}: X \to B_\et G$. In other words \begin{equation} \label{eq:iterated-t0} N_{\tilde{p}} = L_\Zar f_\sharp p_\otimes N_p^0,\end{equation} where $N_p^0 \in \PSh(\Sm_F)_{/\scr C_T}$ is the diagram computing $D^\mot_H(E_F)$.
In particular \begin{equation} \label{eq:iterated-t1} N \wequi \colim_{F \xrightarrow{\tilde p} X \xrightarrow{f} S} L_\Zar f_\sharp p_\otimes N_p^0. \end{equation}

Let $N_p' \to N_p^0$ be the universal approximation of $N_p$ by smooth quasi-projective schemes, i.e., the left Kan extension of its restriction to smooth quasi-projective schemes. Since $N_p$ is a sheaf, it preserves finite products, whence so does $N_p'$; i.e. $N_p'$ is a sifted colimit of smooth quasi-projective schemes. Moreover $N_p' \to N_p$ is an equivalence on smooth quasi-projective schemes, so in particular a Zariski equivalence. Since $\scr C$ is a Zariski sheaf, by Example \ref{ex:M-tau-local} ($M$ preserves Zariski equivalences) and Proposition \ref{prop:M-p-otimes} (compatibility of $M$ with $p_\otimes$) we get \[ M_X(p_\otimes N_p^0) \wequi M_X(p_\otimes N_p') \wequi p_\otimes M_F(N_p') \wequi p_\otimes M_F(N_p^0) \wequi p_\otimes D^\mot_H(E_F). \] 

With the hypotheses on $\scr C$, we can use Example \ref{ex:M-tau-local} ($M$ preserves Zariski equivalences), Proposition \ref{prop:interaction-M-f-sharp} (compatibility of $M$ with $f_\sharp$) and Remark \ref{rmk:Dn-smooth-base-change} (compatibility of $D^\mot_H$ with smooth base change), to obtain equivalences: \begin{equation} \label{eq:iterated-t2} M_S(L_\Zar f_\sharp p_\otimes N_p^0) \wequi f_\sharp p_\otimes D^\mot_H(E_F) \wequi f_\sharp p_\otimes (D^\mot_H(E)_F). \end{equation}
The claim then follows by the following computation:
\begin{align*}
  D^\mot_G(D^\mot_H (E)) &\stackrel{E.\ref{ex:D-gen-colimit}}{\wequi} \colim_{F \xrightarrow{\tilde p} X \xrightarrow{f} S} f_\sharp p_\otimes (D^\mot_H (E)_F) \\
                         &\stackrel{\eqref{eq:iterated-t2}}{\wequi} \colim M_S(L_\Zar f_\sharp p_\otimes N_p^0) \\
                         &\stackrel{\eqref{eq:iterated-t0}}{\wequi} \colim M_S (N_{\tilde p}) \\
                         &\wequi M_S(\colim N_{\tilde p}) \\
                         &\stackrel{\eqref{eq:iterated-t1}}{\wequi} M_S (N) \\
                         &\wequi D^\mot_{H \wr G}(E).
\end{align*}
\end{proof}

\subsubsection{Excisivity results} With the assumptions on $\scr C$ from the previous section still in play, we discuss some excisivity properties of the extended powers. Recall that a functor $f: \scr D \to \scr E$ is called $n$-excisive if it sends strongly cocartesian $(n+1)$-cubes to cartesian $(n+1)$-cubes \cite[Definition 6.1.1.3]{HA}.

\begin{lemma} \label{lemm:f-otimes-excisive}
Let $f: X \to Y \in \Sm_S$ be finite étale of degree $\le n$. Then $f_\otimes: \scr C(X) \to \scr C(Y)$ is polynomial of degree $\le n$, and so $n$-excisive.
\end{lemma}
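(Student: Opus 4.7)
The plan is to prove the two halves of the statement separately. The second assertion (``and so $n$-excisive'') is a standard consequence of Goodwillie calculus: a functor between stable $\infty$-categories which is polynomial of degree $\le n$ (in the sense of having vanishing $(n+1)$-th cross-effect) is automatically $n$-excisive; see \cite[\S 6.1]{HA}.

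The first assertion is the real content. The intuition is that $f_\otimes$ should behave like an $n$-fold tensor product: in any stable presentably symmetric monoidal $\infty$-category, the $n$-fold tensor product is exact in each variable, hence its diagonal is polynomial of degree $\le n$. For the trivial cover $f : Y \sqcup \cdots \sqcup Y \to Y$ ($n$ copies), the distributivity law identifies $f_\otimes$ with precisely this functor, so the result holds in the split case.

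For the general case I would proceed by induction on $n = \deg(f)$. The base case $n = 0$ forces $X = \emptyset$, so $f_\otimes$ is constant at $\1_Y$, which is polynomial of degree $0$. For the inductive step, use that since $f$ is étale (hence unramified) the diagonal $\Delta : X \to X \times_Y X$ is a clopen immersion, giving a canonical decomposition $X \times_Y X \simeq X \sqcup X'$ with $f' : X' \to X$ finite étale of degree $n - 1$. Smooth base change for the cartesian square obtained by pulling $f$ back along itself (applicable since $f$ is smooth) combined with the distributivity law yields a canonical equivalence
$$ f^* f_\otimes(E) \;\simeq\; E \otimes f'_\otimes(g^* E), $$
where $g : X' \to X$ is the second projection. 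By induction $f'_\otimes$ is polynomial of degree $\le n-1$; tensoring with a single exact ``free'' factor $E$ bumps the polynomial degree by one, so $f^* f_\otimes$ is polynomial of degree $\le n$.

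The main obstacle is passing from polynomiality of $f^* f_\otimes$ to polynomiality of $f_\otimes$ itself, since the stated hypotheses on $\scr C$ do not include étale descent (which would give conservativity of $f^*$). I expect the resolution to proceed by computing cross-effects directly from distributivity rather than via descent: expanding $f_\otimes$ applied to a strongly cocartesian $(n+1)$-cube via the distributivity law decomposes it into a sum of tensor products of at most $\deg(f) \le n$ pulled-back entries, and a pigeonhole argument on which cube coordinates contribute to each tensor factor forces the $(n+1)$-st cross-effect to vanish. Alternatively, one may appeal directly to the polynomial functor theory for norms developed in \cite[\S 5]{norms}.
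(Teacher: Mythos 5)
The paper's entire proof is a citation: polynomiality is ``the same argument as \cite[Proposition 5.25]{norms}'' and the passage from polynomial of degree $\le n$ to $n$-excisive is \cite[Remark 5.22]{norms}. So your closing fallback (``appeal directly to the polynomial functor theory for norms developed in \cite[\S 5]{norms}'') \emph{is} the paper's proof, and your remark about the second assertion is correct in content (the relevant notion of ``polynomial'' here is the Bachmann--Hoyois one, defined via iterated differences, and the cited remark is precisely that in the stable setting this implies $n$-excisivity in the sense of \cite{HA}).

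Your attempted direct argument, however, has the gap you yourself flag, and it is worth being precise about why the cited argument does not run into it. Your induction passes through $f^*f_\otimes(E) \simeq E \otimes f'_\otimes(g^*E)$, which is a correct double-coset-type formula but only controls $f^*f_\otimes$; the hypotheses in force in this subsection (stability, smooth base change, distributivity, sifted-colimit preservation of $f_\otimes$) do not include any descent that would make $f^*$ conservative, so polynomiality of $f_\otimes$ does not follow. The argument of \cite[Proposition 5.25]{norms} never pulls back along $f$: it computes the cross-effect on the base directly, writing $a \oplus b = e_\sharp(a,b)$ for the fold map $e\colon X \sqcup X \to X$ and applying the distributivity law $f_\otimes e_\sharp \simeq r_\sharp \tilde e_\otimes$, where the Weil restriction $R_f(X \sqcup X) \to Y$ parametrizes fiberwise clopen decompositions of $X$ and itself splits into clopen pieces indexed by the degrees $(i,j)$ with $i+j = \deg f$; each piece contributes a term built from norms along maps of degree $i$ and $j$, and induction on the degree gives polynomiality of degree $\le n$. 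This is exactly the ``pigeonhole on which summands contribute to each tensor factor'' you gesture at, so your instinct for the fix is right; the point is that the decomposition happens over $Y$ via the Weil restriction of the fold map, not over $X$ via the diagonal.
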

\begin{proof}
This is the same argument as \cite[Proposition 5.25]{norms}. For the $n$-excisivity, see \cite[Remark 5.22]{norms}.
\end{proof}

\begin{corollary} \label{corr:D-mot-excisive}
Let $\scr X \in \PSh(\Sm_S)_{/B_\et \Sigma_n}$. Then $D^\mot_\scr{X}: \scr C(S) \to \scr C(S)$ is $n$-excisive.
\end{corollary}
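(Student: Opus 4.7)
The plan is to exhibit $D^\mot_\scr{X}$ as a colimit of manifestly $n$-excisive functors, and to finish by invoking the general fact that $n$-excisive functors into a stable target are closed under small colimits.

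First, I would use Example~\ref{ex:D-gen-colimit} to write
\[
D^\mot_\scr{X}(E) \wequi \colim_{p' \in (\Sm_S)_{\sslash \scr X}} f_\sharp\, p_\otimes\, (fp)^* E,
\]
where, under the identification $B_\et \Sigma_n \wequi \FEt^{\wequi, n}$ of Lemma~\ref{lem:stk}, the morphism $p \colon U \to T$ is the finite étale cover classified by the composite $T \to \scr X \to B_\et \Sigma_n$ and $f \colon T \to S$ is the structure map. The essential role of the hypothesis $\scr X \to B_\et \Sigma_n$ is to ensure that each $p$ has degree $n$.

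Next I would check that each individual summand $E \mapsto f_\sharp\, p_\otimes\, (fp)^* E$ is $n$-excisive. Lemma~\ref{lemm:f-otimes-excisive} supplies $n$-excisivity of $p_\otimes$, so it suffices to know that $(fp)^*$ and $f_\sharp$ are exact, because exact functors between stable $\infty$-categories preserve both strongly cocartesian and cartesian $(n+1)$-cubes, and therefore precomposing or postcomposing an $n$-excisive functor with them preserves $n$-excisivity. Both exactness claims are formal: $f_\sharp$ is a left adjoint between stable $\infty$-categories, hence preserves finite colimits, hence is exact; and $(fp)^*$ admits a left adjoint $(fp)_\sharp$ by the standing hypothesis~\eqref{eq:c} (since $U \in \Sm_S$), hence preserves finite limits, which in a stable $\infty$-category is equivalent to preserving finite colimits.

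To close, I would invoke the standard closure property that the full subcategory of $n$-excisive functors $\scr C(S) \to \scr C(S)$ is closed under small colimits in $\Fun(\scr C(S), \scr C(S))$: $n$-excisivity asks that strongly cocartesian $(n+1)$-cubes be sent to cartesian cubes, which in the stable target $\scr C(S)$ are equivalently cocartesian, and cocartesian cubes are stable under small colimits. I do not anticipate any genuine obstacle; the only subtle point is the exactness of $(fp)^*$, which is handled by the adjoint-functor characterization of exactness in a stable setting, and the bookkeeping identifying the index category and the degree-$n$ condition via Lemma~\ref{lem:stk}.
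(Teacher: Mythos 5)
Your proposal is correct and follows essentially the same route as the paper: reduce via the colimit formula of Example~\ref{ex:D-gen-colimit} to the single functors $f_\sharp\, p_\otimes\, (fp)^*$, apply Lemma~\ref{lemm:f-otimes-excisive} together with exactness of $f_\sharp$ and pullback, and conclude by closure of $n$-excisive functors under colimits in a stable target (the paper cites \cite[Remark 6.1.5.10]{HA} for this last point). The extra justifications you supply for exactness and for the closure property are exactly the ones implicitly used in the paper's proof.
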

\begin{proof}

In a stable category any colimit of $n$-excisive functors is $n$-excisive \cite[Remark 6.1.5.10]{HA}. From the colimit formula for $D^\mot_\scr{X}$ in Example~\ref{ex:D-gen-colimit}, it thus remains to show that $f_\sharp g_\otimes p^*$ is $n$-excisive, for $g$ finite étale of degree $n$. This follows from Lemma \ref{lemm:f-otimes-excisive}, since $f_\sharp, p^*$ are exact.

\end{proof}

We also have the following observation. Recall that a functor is called reduced if it preserves the terminal object.

\begin{lemma} \label{lemm:f-otimes-reduced}
Let $f: X \to Y \in \Sm_S$ be finite surjective étale. Then $f_\otimes$ is reduced.
\end{lemma}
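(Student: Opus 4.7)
The strategy is to exploit surjectivity of $f$ by propagating it, via the distributive law, from a geometric statement about the empty scheme to the analogous categorical statement about the zero object. Since each $\scr C(X)$ is stable, its terminal object $*_X$ coincides with the initial (zero) object, and $f^*$ preserves terminal objects, so $*_X \simeq f^*(*_Y)$. The aim is therefore to show $f_\otimes(*_X) \simeq *_Y$.

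The first plan is to reduce to the split case. When $f$ is a split fold map $\nabla: \coprod_{i=1}^n Y \to Y$, we have $\scr C(\coprod_{i=1}^n Y) \simeq \prod_{i=1}^n \scr C(Y)$ and $\nabla_\otimes(E_1, \ldots, E_n) \simeq E_1 \otimes \cdots \otimes E_n$. Plugging in $E_i = *_Y$ yields $*_Y^{\otimes n}$, which equals $*_Y$ because the tensor product of $\scr C(Y)$ is cocontinuous in each variable and hence preserves the zero object. The reduction from the general surjective finite étale case to the split case would proceed by étale descent and the fact that $f_\otimes$ preserves sifted colimits (by hypothesis at the start of this section), together with Proposition \ref{prop:D-mot-changing-S}-style base change compatibility.

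A second, more direct plan is to apply the distributive law \cite[Proposition 5.12]{norms} to the square
\begin{equation*}
\begin{CD}
\emptyset @>>> X \\
@VVV @VVfV \\
\emptyset @>>> Y,
\end{CD}
\end{equation*}
using the geometric input that, since $f$ is surjective, the Weil restriction $R_f(\emptyset_X) = \emptyset_Y$: any section of $\emptyset \to X$ over a nonempty $T \to Y$ would force $f^{-1}(\mathrm{im}\,T) = \emptyset$, contradicting surjectivity. One then identifies $*_X$ with the image of any object of $\scr C(\emptyset)$ under the left adjoint of $(\emptyset \hookrightarrow X)^*$, and transports the equality $R_f(\emptyset_X) = \emptyset_Y$ through the distributivity equivalence to conclude $f_\otimes(*_X) \simeq *_Y$.

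I expect the main obstacle to be the reduction step: in the first plan, formalizing étale descent for $\scr C$ without an explicit sheaf hypothesis, and in the second plan, carefully handling the interface between the empty scheme (which may not literally lie in $\Sm_S$) and the abstract zero object of $\scr C(X)$ in such a way that the distributive law as stated in \cite{norms} applies cleanly. A possible third route avoiding $\emptyset$ altogether uses the clopen decomposition $X \times_Y X \cong X \coprod X'$ coming from the étale diagonal and applies distributivity to the self–base-change square of $f$; this produces $f^* f_\otimes(*_X) \simeq *_X$ via cocontinuity of $\otimes$, but then requires an additional conservativity argument for $f^*$ on the terminal object which should follow in turn from $f$ being surjective.
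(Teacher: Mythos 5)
Your second plan is exactly the paper's proof: for an arbitrary finite étale $f$, the distributivity law identifies $f_\otimes(0)$ with $g_\sharp(\1)$, where $g: Z \to Y$ is the inclusion of the complement of the image of $f$ (precisely the Weil restriction $R_f\emptyset$, by the argument you give), and this vanishes exactly when $Z = \emptyset$, i.e.\ when $f$ is surjective. The obstacle you anticipate is not a real one — $\emptyset$ is a smooth quasi-projective $S$-scheme, $Z$ is clopen in $Y$ since the image of a finite étale map is clopen, and the zero object of $\scr C(X)$ is $(\emptyset\to X)_\sharp$ of the unique object of $\scr C(\emptyset)\wequi *$, so \cite[Proposition 5.12]{norms} applies directly; the first plan (étale descent for $\scr C$, which is not assumed to be an étale sheaf) and the third (which needs an unjustified conservativity of $f^*$) should be discarded.
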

\begin{proof}
For $f: X \to Y$ finite étale, let $g: Z \to Y$ be the inclusion of the complement of the image of $f$. Then the distributivity law implies that $f_\otimes(0) \wequi g_\sharp(\1)$, which is zero as soon as $Z = \emptyset$.
\end{proof}

\begin{corollary} \label{corr:D-mot-reduced}
Let $\scr X \in \PSh(\Sm_S)_{/B_\et \Sigma_n}$, $n \ge 1$. Then $D^\mot_\scr{X}: \scr C(S) \to \scr C(S)$ is reduced.
\end{corollary}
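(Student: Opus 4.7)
The plan is to mirror the proof of the preceding Corollary~\ref{corr:D-mot-excisive}, with the input Lemma~\ref{lemm:f-otimes-excisive} replaced by Lemma~\ref{lemm:f-otimes-reduced}. Starting from the explicit colimit formula of Example~\ref{ex:D-gen-colimit}, I would write
\[ D^\mot_\scr{X}(E) \simeq \colim_{p' \in (\Sm_S)_{\sslash \scr X}} f_\sharp\, p_\otimes (E|_U), \]
where $p' \in \scr X(X)$ classifies (via the structure map $\scr X \to B_\et\Sigma_n$) a finite étale morphism $p: U \to X$ of constant rank $n$, and $f: X \to S$ is the structure map. Since $\scr C$ is assumed stable, the terminal object coincides with the zero object, so being reduced amounts to checking $D^\mot_\scr{X}(0) \simeq 0$.

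The key observation is that every $p$ appearing in the indexing category has constant rank $n \ge 1$, so (assuming $X \ne \emptyset$, the empty case being vacuous) $p$ is a finite surjective étale morphism. Lemma~\ref{lemm:f-otimes-reduced} then gives $p_\otimes(0) \simeq 0$; as $f_\sharp$ is a left adjoint between pointed $\infty$-categories, it preserves the zero object, so each term $f_\sharp\, p_\otimes(0|_U)$ of the diagram vanishes. The colimit of the constant zero diagram is zero, which finishes the argument.

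I do not anticipate a real obstacle here: the one point to flag is that the hypothesis $n \ge 1$ is precisely what forces surjectivity of every indexing $p$ and thereby activates Lemma~\ref{lemm:f-otimes-reduced}. The case $n = 0$ is genuinely excluded, since $B_\et\Sigma_0 \simeq *$ and $D^\mot_{\Sigma_0}(E) \simeq \1_S \ne 0$.
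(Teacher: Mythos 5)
Your proof is correct and follows essentially the same route as the paper's: reduce via the colimit formula of Example~\ref{ex:D-gen-colimit} to the individual terms $f_\sharp p_\otimes p^*$, note that $f_\sharp$ and $p^*$ preserve the zero object, and invoke Lemma~\ref{lemm:f-otimes-reduced} for $p_\otimes$, with $n \ge 1$ guaranteeing surjectivity. Your explicit remarks on why rank $n\ge 1$ forces surjectivity and on the vacuity of the empty-base case are welcome elaborations of details the paper leaves implicit.
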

\begin{proof}
Any colimit of functors preserving the $0$-object preserves the $0$-object, so it suffices to show that $f_\sharp g_\otimes p^*$ preserves the $0$-object, where $g$ is finite étale of degree $n$. Since $f_\sharp$ and $p^*$ are exact they preserve the $0$-object, and for $g_\otimes$ this follows from Lemma \ref{lemm:f-otimes-reduced}, using that $n \ge 1$.
\end{proof}

Finally we can compute the cross effect, in the case $n=2$.
We thank Markus Spitzweck for pointing out the following elegant proof.
\begin{proposition} \label{prop:D2-mot-cr2}
Let $D_2^\mot: \scr C(S) \to \scr C(S)$ denote the motivic extended square. Then $D_2^\mot(E \vee F) \wequi D_2^\mot(E) \vee D_2^\mot(F) \vee E \wedge F$.
\end{proposition}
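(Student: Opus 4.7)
The plan is to apply the distributive law \cite[Proposition~5.12]{norms} to $p_\otimes$ of the wedge $E \vee F$, then identify the resulting cross summand as a motivic colimit of a constant diagram over a contractible source.

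By Example~\ref{ex:D-gen-colimit},
\[
D_2^\mot(E \vee F) \wequi \colim_{(p\colon T \to X)} (X \to S)_\sharp\, p_\otimes\bigl((E \vee F)_T\bigr),
\]
where the indexing category is $(\Sm_S)_{\sslash B_\et\Sigma_2}$. For a degree-$2$ finite étale cover $p\colon T \to X$ and objects $A, B \in \scr C(T)$, the distributive law produces a natural splitting
\[
p_\otimes(A \oplus B) \wequi p_\otimes A \oplus p_\otimes B \oplus p_\sharp(A \otimes B).
\]
The cross summand collects the contribution from the ``non-constant sections'' of the fiberwise $2$-element set $T \to \{A,B\}$; in degree $2$ these sections form a $\Sigma_2$-torsor over $X$ canonically isomorphic to $T$ itself, which is why the cross term features $p_\sharp$ rather than a second instance of $p_\otimes$.

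Taking $A = E_T$, $B = F_T$, substituting into the colimit formula, and using that $D_{\mathrm{gen}}^\mot$ preserves wedge sums in the $\scr C(S)$-variable (Proposition~\ref{prop:D-mot-colim}(2)), the first two summands yield $D_2^\mot(E) \vee D_2^\mot(F)$. For the third, I would commute $(X \to S)_\sharp$ with $p_\sharp$ using Proposition~\ref{prop:interaction-M-f-sharp}(3), rewriting it as the motivic colimit of the constant diagram at $E \wedge F$ indexed by the total space of the universal degree-$2$ cover $Y_\mathrm{univ} \to B_\et\Sigma_2$. Since this universal cover is classified by the subgroup inclusion $\Sigma_1 \hookrightarrow \Sigma_2$, we have $Y_\mathrm{univ} \wequi B_\et\Sigma_1 \wequi *$, so Example~\ref{ex:constant-presheaves} identifies the cross summand with $E \wedge F$, as desired.

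I expect the main obstacle to be establishing the precise form of the rank-$2$ distributive decomposition with enough naturality to push it through the motivic colimit (in particular, pinning down the claim that the non-constant section scheme is canonically $T$ over $X$ with the correct $\Sigma_2$-equivariance). Once this splitting is in hand, identifying the total space of the universal degree-$2$ cover with the final object is straightforward.
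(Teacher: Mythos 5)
Your argument is correct in outline but takes a genuinely different route from the paper. The paper's proof is a two-line formal deduction: it invokes the free normed spectrum functor $\NSym: \scr C(S) \to \scr C(S)$, which (being a composite of a left adjoint into normed algebras with the forgetful functor) carries wedges to smash products, together with the splitting $\NSym(E) \wequi \bigvee_{n \ge 0} D_n^\mot(E)$ from \cite[Theorem 3.10]{bachmann-MGM}; comparing the rank-$2$ summands of $\NSym(E \vee F) \wequi \NSym(E) \wedge \NSym(F)$ and using $D_0^\mot = \1$, $D_1^\mot = \id$ gives the claim. All the coherence is thus outsourced to the cited theorem. Your proof instead unwinds the colimit formula and applies the rank-$2$ distributive law termwise; this is more hands-on and makes the geometric origin of the cross term ($E \wedge F$ as the contribution of the non-constant sections, whose moduli is the total space $B_\et\Sigma_1 \wequi *$ of the universal double cover) completely transparent, at the cost of having to establish the distributivity splitting naturally in the indexing variable $(p: T \to X) \in (\Sm_S)_{\sslash B_\et\Sigma_2}$ --- precisely the coherence problem you flag, and the reason the paper prefers the $\NSym$ shortcut. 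Your route does have the advantage of generalizing to compute the full cross-effect of $D^\mot_{\scr X}$ for other $\scr X$, which the $\NSym$ argument does not immediately give.

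Two small corrections. First, the citation of Proposition~\ref{prop:D-mot-colim}(2) is misplaced: $p_\otimes$ does not preserve wedges, so $D^\mot_{\mathrm{gen}}$ does not preserve them in the $\scr C(S)$-variable; what you actually need is only that the colimit of a (natural) wedge of diagrams is the wedge of the colimits, which is automatic. Second, the cross summand for general $A, B \in \scr C(T)$ is $p_\sharp(A \otimes \sigma^* B)$, with $\sigma$ the deck transformation of $T/X$; this twist is invisible in your application only because $E_T$ and $F_T$ are pulled back from $S$, so you should say so explicitly.
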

\begin{proof}
The free normed spectrum functor \[ \NSym: \scr C(S) \to \NAlg(\scr C(S)) \to \scr C(S) \] satisfies $\NSym(E \vee F) \wequi F(E) \wedge F(F)$, and also \cite[Theorem 3.10]{bachmann-MGM} \[ \NSym(E) \wequi \bigvee_{n \ge 0} D_n^\mot(E). \]
The claim can be read off from this.
\end{proof}

\subsubsection{Thom isomorphisms}
If $A \in \NAlg(\SH(S))$, then there is a functor 
\[
\Mod_A(\SH)^\otimes: \Span(\Sm_S, \all, \fet) \to \widehat{\Cat}_\infty,
\] promoting the symmetric monoidal $\infty$-category $\Mod_A(\SH(S))$ to a normed $\infty$-category \cite[Proposition 7.6.4]{norms}.
This comes equipped with a transformation $\SH^\otimes \Rightarrow \Mod_A(\SH)^\otimes$, given in components by $- \otimes A$. We denote the motivic extended powers for the functor $\scr C^\otimes = \Mod_A(\SH)^\otimes$ by 
\[
D^A_n: \Mod_A(\SH(S)) \to \Mod_A(\SH(S)).
\]

\begin{proposition} \label{prop:Dn-thom-iso}
Let $A \in \NAlg(\SH(S))_{\MGL/}$ and $E \in \Mod_A(\SH(S))$. Then there is a canonical equivalence $D^A_n(\Sigma^{2,1} E) \wequi \Sigma^{2n,n} D^A_n(E)$.
\end{proposition}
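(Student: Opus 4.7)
The plan is to combine the colimit formula for $D^A_n$ with the orientation of $A$. By Example \ref{ex:D-gen-colimit},
\[
D^A_n(E) \simeq \colim_{(f: X \to S,\, p: F \to X) \in (\Sm_S)_{\sslash B_\et \Sigma_n}} f_\sharp\, p_\otimes^A(E_F),
\]
where $p$ is the rank-$n$ finite étale cover classified by the map $X \to B_\et \Sigma_n$. Since $f_\sharp$ and colimits commute with $\Sigma^{2n,n}$, it will suffice to construct an equivalence
\[
p_\otimes^A(\Sigma^{2,1} E_F) \;\simeq\; \Sigma^{2n,n}\, p_\otimes^A(E_F) \quad \text{in } \Mod_A(\SH(X)),
\]
naturally in the data $(X, F \xrightarrow{p} X, E)$ parametrized by $(\Sm_S)_{\sslash B_\et \Sigma_n}$.

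For this I would use the following ingredients. First, $\Sigma^{2,1}\mathbf{1}_F \simeq \Th_F(\mathcal{O}_F)$, so $\Sigma^{2,1} E_F \simeq \Th_F(\mathcal{O}_F) \wedge_A E_F$. Second, the motivic $J$-homomorphism $K \to \Pic(\SH)$ is compatible with norms (see \cite[Proposition 16.17]{norms} and Example \ref{ex:2}), so one has a canonical identification $p_\otimes \Th_F(\mathcal{O}_F) \simeq \Th_X(p_*\mathcal{O}_F)$, where $p_*\mathcal{O}_F$ is the rank-$n$ vector bundle on $X$ given by Weil restriction. Third, because $p_\otimes^A$ is symmetric monoidal,
\[
p_\otimes^A(\Th_F(\mathcal{O}_F) \wedge_A E_F) \simeq \Th_X(p_*\mathcal{O}_F) \wedge_A p_\otimes^A(E_F).
\]
Finally, since $A \in \NAlg(\SH)_{\MGL/}$ is oriented, every rank-$n$ vector bundle $V$ on $X$ gives a Thom isomorphism $\Th_X(V) \wedge_A (-) \simeq \Sigma^{2n, n}(-)$ on $A$-modules. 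Applying this to $V = p_*\mathcal{O}_F$ produces the desired equivalence.

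The main obstacle is ensuring that all of these identifications are canonical and natural over the indexing $\infty$-category $(\Sm_S)_{\sslash B_\et \Sigma_n}$, so that the pointwise equivalences assemble into a genuine natural transformation of diagrams before passing to the motivic colimit. The cleanest way to encode this is to observe that the $\MGL$-orientation of $A$ produces a map of normed presheaves from the rank-$n$ part $K^{(n)}$ of $K$-theory to the constant presheaf $\{\Sigma^{2n,n} A\} \subset \Pic(\Mod_A)$, refining the composite $K \to \Pic(\SH) \to \Pic(\Mod_A)$; the existence and norm-compatibility of this refinement is exactly the orientation data. Given this, the equivalence $p_\otimes^A \Sigma^{2,1} \simeq \Sigma^{2n,n} p_\otimes^A$ becomes a natural transformation of functors on $(\Sm_S)_{\sslash B_\et \Sigma_n}$ valued in $\Mod_A$, and Proposition \ref{prop:Dmot-changing-C} applied to the morphism $\MGL \to A$ together with cocontinuity of the motivic colimit completes the argument.
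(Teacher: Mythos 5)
Your proposal is correct and follows essentially the same route as the paper: apply the colimit formula of Example \ref{ex:D-gen-colimit}, use symmetric monoidality of $p_\otimes$ together with its value on Thom spectra of trivial bundles (\cite[Lemma 4.4]{norms}) to produce $\Sigma^{p_*\scr O}\1$, invoke the coherent $\MGL$-orientation of $A$ to trivialize this Thom object to $\Sigma^{2n,n}\1$, and conclude via the projection formula and cocontinuity. Your extra discussion of how the orientation supplies the naturality over the indexing category is exactly the content the paper compresses into the phrase ``coherent orientation on $A$.''
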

\begin{proof}
Put $T=\Sigma^{2,1} \1$.
The claim follows from the following computation, carried out in the stable $\infty$-category $\Mod_A(\SH(S))$:
\begin{eqnarray*}
D^A_n(E \otimes T)  & \stackrel{E.\ref{ex:D-gen-colimit}}{\simeq} & \colim_{X \stackrel{f}{\rightarrow} S, U \stackrel{p}{\rightarrow} X} f_{\sharp}p_{\otimes}p^*(E \otimes T)\\
& \stackrel{(2)}{\simeq} & \colim f_{\sharp}( p_{\otimes}p^*E \otimes p_{\otimes}p^*T)\\
& \stackrel{(3)}{\simeq} & \colim f_{\sharp}(p_{\otimes}p^*E \otimes \Sigma^{p_*\scr O_U}\1)\\
& \stackrel{(4)}{\simeq} & \colim f_{\sharp}(p_{\otimes}p^*E \otimes \Sigma^{2n,n}\1)\\
& \stackrel{(5)}{\simeq} & \colim f_{\sharp}(p_{\otimes}p^*E) \otimes \Sigma^{2n,n}\1\\
& \simeq & T^{\otimes n}\otimes \colim f_{\sharp}(p_{\otimes}p^*E)\\
& \stackrel{E.\ref{ex:D-gen-colimit}}{\simeq} & T^{\otimes n} \otimes D^A_n(E).
\end{eqnarray*}
Here, the second equivalence follows from symmetric monoidality of $p_{\otimes}$, the third equivalence follows from the value of $p_{\otimes}$ on spheres \cite[Lemma 4.4]{norms}, the fourth equivalence follows from the coherent orientation on $A$ and the fifth follows from the projection formula. 
\end{proof}

\subsubsection{Extended powers and transfers}
\begin{lemma} \label{lemm:BG-fet}
Let $H \subset G$ be an inclusion of finite groups, with $n = |G:H|$. Then $B_\et H \to B_\et G$ is a relative finite étale morphism of degree $n$.
\end{lemma}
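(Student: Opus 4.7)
The plan is to verify the defining property of relative finite étaleness directly: for every $Y \in \Sm_S$ and every morphism $\phi \colon Y \to B_\et G$, the pullback $\scr Z := B_\et H \times_{B_\et G} Y$ must be representable by a scheme that is finite étale of degree $n$ over $Y$.

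The map $\phi$ classifies an étale principal $G$-bundle $P \to Y$. By the standard 2-pullback description of classifying stacks (compare \cite[Tag 0370]{stacks}), for any $T \to Y$, a $T$-point of $\scr Z$ is the data of an $H$-torsor $Q \to T$ together with an isomorphism $Q \times^H G \simeq P|_T$ of $G$-torsors. This data is the same as a reduction of structure group of $P|_T$ from $G$ to $H$. I would then identify this functor with sections of the quotient scheme $P/H \to T$, where $H \subset G$ acts on $P$ on the right: a section picks out an $H$-orbit in each fiber of $P$, i.e., an $H$-subtorsor of $P|_T$. Hence $\scr Z \simeq P/H$ as presheaves on $\Sm_Y$. (The quotient $P/H$ exists as a scheme since $H$ is finite and its action on $P$ is free.)

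It then remains to check that $P/H \to Y$ is finite étale of degree $n$. Since $P \to Y$ is étale-locally trivial, there is an étale cover $Y' \to Y$ over which $P \times_Y Y' \simeq G \times Y'$. Taking $H$-quotients we obtain
\[
(P/H) \times_Y Y' \simeq (G/H) \times Y',
\]
which is a disjoint union of $n = |G/H|$ copies of $Y'$, and in particular is finite étale of degree $n$ over $Y'$. Since being finite étale of a fixed degree satisfies étale descent on the target, the original map $P/H \to Y$ is finite étale of degree $n$, as desired.

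The only subtle point is the identification $\scr Z \simeq P/H$, which is a standard fact about quotient stacks but requires a small amount of care to formulate at the level of presheaves on $\Sm_S$; once it is in hand, the étale descent argument finishes the proof.
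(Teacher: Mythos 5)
Your proof is correct and is essentially the argument the paper gives: both reduce, by descent for finite étale morphisms, to the split case where the fiber of $B_\et H \to B_\et G$ is $G/H$. The paper phrases this more tersely by pulling back along the atlas $* \to B_\et G$ and citing fpqc-locality of finite étaleness on the target, whereas you exhibit the pullback over a test scheme $Y$ explicitly as $P/H$ and then trivialize étale-locally on $Y$; this extra explicitness is fine, and your identification of the fiber product with $P/H$ is the standard reduction-of-structure-group correspondence.
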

\begin{proof}
A morphism of sheaves being finite étale is fpqc local on the target \cite[Tags 0245 and 02LA, 02VN]{stacks}.
It hence suffices to check that the pullback of $B_\et H \to B_\et G$ along $* \to B_\et G$ (classifying the trivial torsor) is finite étale of degree $n$; this is clear since the pullback is $G/H$.
\end{proof}
\begin{proposition} \label{prop:Dmot-transfer}
Let $H \subset G$ be an inclusion of finite groups, with $d = |G:H|$. Suppose given an action of $G$ on $\{1, \dots, n\}$ and $E \in \SH(S)$.
Then there is a canonical transfer $D^\mot_G(E) \to D^\mot_H(E)$.

Let $Z \in \SH(S)$. If for each $X \in \Sm_S$, $p: Y \to X$ finite étale of degree $d$ and $F \in \SH(X)$ the composite $Z \wedge F \xrightarrow{\tr_p} Z \wedge p_\sharp p^* F \to Z \wedge F$ is an equivalence, then so is the composite $Z \wedge D^\mot_G(E) \to Z \wedge D^\mot_H(E) \to Z \wedge D^\mot_G(E)$.
\end{proposition}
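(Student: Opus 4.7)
The plan is to derive both statements as direct applications of Corollary~\ref{cor:mot-colimits-transfers} to the finite étale morphism $f: B_\et H \to B_\et G$, which has degree $d$ by Lemma~\ref{lemm:BG-fet}.

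First I would identify $D^\mot_G(E)$ and $D^\mot_H(E)$ as motivic colimits of a single diagram pulled back along $f$. Unwinding Construction~\ref{cons:N}, the diagram computing $D^\mot_G(E)$ is the composite $D_G: B_\et G \to B_\et \Sigma_n \wequi \FEt^{\wequi,n} \hookrightarrow \FEt^\wequi \xrightarrow{N(\ph,E)} \SH$, and its motivic colimit is $D^\mot_G(E)$. Since the homomorphism $H \hookrightarrow G \to \Sigma_n$ factors through $G$ on the nose, the corresponding map $B_\et H \to \FEt^\wequi$ factors through $B_\et G$ via $f$, and functoriality of $N$ in the presheaf variable (Construction~\ref{cons:N}, Remark~\ref{rmk:N-cocont}) identifies the diagram for $H$ with $D_G \circ f$. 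Thus $D^\mot_H(E) \wequi M(D_G \circ f)$.

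Next, since $\SH$ is $\FEt$-semiadditive (as noted after Definition~\ref{def:fet-semiadd}), Corollary~\ref{cor:mot-colimits-transfers} applied to $D_G$ and $f$ immediately supplies a canonical transfer
\[ \tr_f: D^\mot_G(E) \wequi M(D_G) \to M(D_G \circ f) \wequi D^\mot_H(E), \]
proving the first assertion. For the second assertion, the hypothesis imposed on $Z$ is precisely the condition required by the ``moreover'' clause of Corollary~\ref{cor:mot-colimits-transfers} for degree-$d$ finite étale maps (with $Z$ playing the role of $Z$ and $F$ the role of $E$). Because $\SH$ satisfies the smooth projection formula, that corollary yields that $Z \wedge \tr_f$, and hence the composite $Z \wedge D^\mot_G(E) \to Z \wedge D^\mot_H(E) \to Z \wedge D^\mot_G(E)$, is an equivalence.

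The main point requiring care is the identification in the first paragraph: one must verify that the diagrams for $G$ and $H$ really are related by precomposition with $f$ rather than merely being pointwise equivalent. Since Construction~\ref{cons:N} produces $N$ as an honest functor of the presheaf variable, and since $B_\et H \to \FEt^\wequi$ is (by construction from the group homomorphism) obtained by composing $f$ with $B_\et G \to \FEt^\wequi$, no non-trivial coherence issue arises; beyond this bookkeeping, both assertions are formal consequences of the machinery already developed.
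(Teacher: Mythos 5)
Your proposal is correct and follows exactly the paper's route: the paper proves this proposition precisely as an immediate consequence of the definition of extended powers as motivic colimits, Lemma~\ref{lemm:BG-fet}, and Corollary~\ref{cor:mot-colimits-transfers}. Your write-up merely spells out the bookkeeping (identifying the $H$-diagram as the restriction of the $G$-diagram along $B_\et H \to B_\et G$) that the paper leaves implicit.
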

\begin{proof}
This is an immediate consequence of the definition of extended powers as motivic colimits, Lemma \ref{lemm:BG-fet} and Corollary \ref{cor:mot-colimits-transfers}.
\end{proof}

\begin{example}
If $Z = \H\Z/p$ and $d$ is coprime to $p$, the second part of the above proposition applies.
\end{example}

\section{The equivariant model} \label{sec:equivariant-model}
To proceed further in our study of power operations, we will use equivariant motivic homotopy theory as a computational tool. In this section, we introduce an enhancement of the $n$-fold smash powers construction which lands in genuine $\Sigma_n$-motivic spaces/spectra. We will then relate this to the extended power construction. In Appendix~\ref{app:equiv-mot}, we review the necessary technology from equivariant motivic homotopy theory needed to perform the constructions below.

\subsection{Enhanced smash powers} \label{sec:enhanced-smash-powers}
The \emph{enhanced smash power functor} is of the form
\begin{equation}
(-)^{\wedge \underline{n}}: \SH(S) \rightarrow \SH^{\Sigma_n}(S).
\end{equation} 
The composite with the forgetful functor agrees with the $n$-fold smash product functor:
\begin{equation}  \label{eq:stab-smash}
\begin{tikzcd}
\SH(S) \ar{r}{(-)^{\wedge \underline{n}}}\ar{dr}[swap]{(-)^{\wedge n}}  &  \SH^{\Sigma_n}(S) \ar{d}{{\rm forget}}\\
&   \SH(S).
\end{tikzcd}
\end{equation}
From this and the motivic homotopy orbits construction we obtain a functor \[ X \mapsto \EE \Sigma_{n +}\wedge _{\Sigma_n} X^{\wedge \ul{n}} = X^{\wedge \ul n}_{\hh \Sigma_n}, \] which we'll see agrees with the motivic extended powers; see the discussions in Section~\ref{sect:mot-v-enh}. 

The origin of the enhanced smash power functor is the \emph{unstable enhanced smash power}  
\begin{equation}
(-)^{\wedge \underline{n}}: \Spc(S)_* \rightarrow \Spc^{\Sigma_n}(S)_*,
\end{equation} 
which is an enhancement of the endofunctor $X \mapsto X^{\wedge n}$ or the functor given by $p_{\otimes}p^*$ where $p$ is the fold map $p: \coprod_n S \rightarrow S$:
\begin{equation} \label{eq:unstab-smash}
\begin{tikzcd}
\Spc(S)_* \ar{r}{(-)^{\wedge \underline{n}}}\ar{dr}[swap]{p_{\otimes}p^*}  &  \Spc^{\Sigma_n}(S)_* \ar{d}{{\rm forget}}\\
&   \Spc(S)_*.
\end{tikzcd}
\end{equation}

As hinted in the diagrams ~\eqref{eq:stab-smash} and~\eqref{eq:unstab-smash}, there is an intimate relationship between the smash power functors and the norm functors $p_{\otimes}$ of the first author and Hoyois constructed in \cite[Section 3]{norms}.
In fact, the enhanced smash powers are simply norms along certain finite étale morphisms of stacks.

Recall that if $p: \scr X \to \scr Y$ is a finite étale morphism of stacks, then there are symmetric monoidal norm functors \[ p_\otimes: \Spc(\scr X) \to \Spc(\scr Y), \] and similarly for $\Spc(\ph)_*$ and $\SH(\ph)$, provided that the latter is defined \cite[Section 3.4]{bachmann-MGM}.
By definition, for $X \in \Sm_{\scr X}$ we have $p_\otimes(X) = R_p(X)$, where $R_p$ denotes Weil restriction.

Now let $S$ be a scheme and consider the $\Sigma_n$-scheme $\ul{n} = S^{\coprod n}$ over $S$.
Then $p: [\ul{n}/ \Sigma_n] \to [S / \Sigma_n]$ is a finite étale morphism.
\begin{definition}
We put \[ (\ph)^{\wedge \ul n} = p_\otimes p^*\triv: \Spc(S)_* \to \Spc^{\Sigma_n}(S)_*, \] and similarly for $\SH(\ph), \Spc(\ph)$.
\end{definition}
In particular on the level of schemes, $X^{\times \ul n} = X^n$ with the $\Sigma_n$-action given by permuting the factors.

\subsection{Diagonals}
While we do not need it in this article, we record for future reference that enhanced smash powers of pointed spaces admit diagonals, which we construct as follows.
We have the functors \[ \triv, (\ph)^{\wedge \ul n}: \Sm_{S+} \to \Sm_{S+}^{\Sigma_n}, \quad X_+ \mapsto X_+, (X^{\times \ul{n}})_+ \] which on applying $\PSh_\Sigma$ and using \cite[Lemma 2.1]{norms} induce \[ \triv, (\ph)^{\wedge \ul n}: \PSh_\Sigma(\Sm_S)_* \to \PSh_\Sigma(\Sm_{S+}^{\Sigma_n})_*. \]
The maps $\Delta_n: X_+ \mapsto (X^{\times n})_+$ of schemes are $\Sigma_n$-invariant if $X$ is given the trivial action and $X^n$ the permutation action.
They are also natural in the pointed scheme $X_+$.
Hence they induce \[ \Delta_n: \triv \Rightarrow (\ph)^{\wedge \ul n}: \Sm_{S+} \to \Sm_{S+}^{\Sigma_n}. \]
\begin{definition}\label{def:diagonal-transform}
We denote by \[ \Delta_n: \triv \Rightarrow (\ph)^{\wedge \ul n}: \Spc(S)_* \to \Spc^{\Sigma_n}(S)_* \] the induced transformation.
\end{definition}

\subsection{Motivic extended powers via enhanced smash powers} \label{sect:mot-v-enh} We will now construct the motivic extended powers via enhanced smash powers. For a group $G$, we have $\EE G \in \Spc^G(S)[\scr F_\triv]$ as in Definition~\ref{def:eg}.
Recall the motivic homotopy orbits functor \[ (\ph)_{\hh G}: \SH^G(S) \xrightarrow{\wedge \EE G_+} \SH^G(S)[\scr F_\triv] \xrightarrow{(\ph)/G} \SH^G(S) \] from Section \ref{subsec:quotients}.

\begin{definition} \label{def:orbits-enh}
We denote by $\DD_n$ the functor \[ \DD_n: \SH(S) \xrightarrow{(\ph)^{\wedge \ul{n}}} \SH^{\Sigma_n}(S) \xrightarrow{(\ph)_{\hh \Sigma_n}} \SH(S). \]
If $H \subset \Sigma_n$ is a subgroup, then we denote by $\DD_H$ the functor \[ \DD_H: \SH(S) \xrightarrow{(\ph)^{\wedge \ul{n}}} \SH^{\Sigma_n}(S) \to  \SH^{H}(S) \xrightarrow{(\ph)_{\hh H}} \SH(S). \]
\end{definition}

Our next goal is to prove a canonical equivalence of endofunctors  $\D_n^\mot \simeq \DD_n$. To do so, recall that we have the presheaf $B_\et \Sigma_n$ of principal $\Sigma_n$-bundles. Given a principal $\Sigma_n$-bundle $T \to X$, we can form the quotient $T \times_{\Sigma_n} \{1, \dots, n\}$ (in the étale topology); this is a finite étale scheme over $X$ of rank $n$. In this way we obtain a morphism of presheaves $B_\et \Sigma_n \to \FEt^{\wequi,n}$ elaborating the equivalence already stated in Lemma~\ref{lem:stk}.

\begin{proposition}
Let $p: T \to S$ be a finite étale morphism of degree $n$, and $q: R \to S$ the associated $\Sigma_n$-torsor.
Then the we have an equivalence \[ p_\otimes p^* \wequi R_+ \wedge_{\Sigma_n} (\ph)^{\wedge \ul{n}}: \SH(S) \to \SH(S), \] naturally in $p$.
\end{proposition}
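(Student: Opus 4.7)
The plan is to realize both sides of the claimed equivalence as specializations of a single universal construction over the classifying stack $B_\et \Sigma_n$, indexed by the classifying map $\bar q\colon S \to B_\et\Sigma_n$ of the $\Sigma_n$-torsor $q\colon R \to S$. The key geometric input is the cartesian square
\[
\begin{CD}
T @>\tilde q>> [\ul n/\Sigma_n] \\
@V p VV                @V\pi VV \\
S @>\bar q>> B_\et\Sigma_n,
\end{CD}
\]
in which $\pi$ is the universal degree-$n$ finite étale cover over $B_\et \Sigma_n$ and the right column is the universal recipient of the enhanced smash power construction $(\ph)^{\wedge \ul n} = \pi_\otimes \pi^* \triv$.

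The first step is to observe that, by the span-functoriality of $\scr C^\otimes$ and base change for the finite étale norm $\pi_\otimes$ along the square above, there is a natural equivalence $\bar q^* \pi_\otimes \pi^* \triv X \simeq p_\otimes \tilde q^* \pi^* \triv X \simeq p_\otimes p^* X$. This identifies the LHS with the pullback along $\bar q$ of the universal enhanced smash power of $X$.

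The second step is to identify the geometric orbit functor $R_+ \wedge_{\Sigma_n}(\ph)$ on $\SH^{\Sigma_n}(S) = \SH([S/\Sigma_n])$ with the same pullback along $\bar q$. Since $\Sigma_n$ acts freely on $R$ with quotient $S$, the scheme $R$ is a model over $S$ for the universal free $\Sigma_n$-object; by the construction of the motivic homotopy orbit functor recalled in Section~\ref{subsec:quotients} and Appendix~\ref{app:equiv-mot}, $R_+ \wedge_{\Sigma_n}(\ph)$ is the operation of restricting along the classifying morphism $S \simeq [R/\Sigma_n] \to [S/\Sigma_n]$, which matches $\bar q^*$ after the evident factorization through $B_\et\Sigma_n$.

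The main obstacle will be making this second identification fully rigorous and verifying naturality in $p$ (equivalently, in the torsor $q$). I expect this to follow from the equivariant six-functor formalism of Appendix~\ref{app:equiv-mot} together with the standard fact that, for free actions, the motivic quotient agrees with the geometric quotient, so that both assignments $p \mapsto p_\otimes p^*$ and $p \mapsto R_+ \wedge_{\Sigma_n}(\ph)^{\wedge \ul n}$ extend to functors on the groupoid of degree-$n$ finite étale covers over $S$ and agree at the universal case. As a reality check on generators, one may verify the equivalence on suspension spectra $X = \Sigma^\infty Y_+$ for $Y \in \Sm_S$: the LHS computes $\Sigma^\infty R_p(Y \times_S T)_+$ (a Weil restriction) while the RHS computes $\Sigma^\infty (R \times^{\Sigma_n} Y^n)_+$, and the classical descent isomorphism $R_p(Y \times_S T) \simeq R \times^{\Sigma_n} Y^n$ supplies the comparison on generators.
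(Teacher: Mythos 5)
Your proposal is correct, and it reaches the same first reformulation as the paper — namely that $R_+ \wedge_{\Sigma_n} (\ph)^{\wedge \ul n}$ is the composite of the enhanced smash power with restriction along $[q/\Sigma_n]^*\colon \SH([S/\Sigma_n]) \to \SH([R/\Sigma_n]) \wequi \SH(S)$ — but it closes the comparison by a genuinely different route. The paper observes that both functors are symmetric monoidal and preserve sifted colimits, invokes the universal property of $\SH(S)$ \cite[Lemma 4.1]{norms} to reduce to suspension spectra of smooth quasi-projective $X$, and there checks the geometric identity $R_p(X\times_S T) \wequi R\times_{\Sigma_n} X^n$ étale-locally; your closing ``reality check'' is precisely this step, demoted to a sanity check. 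Your main argument instead exhibits both functors as $\bar q^*$ of the single universal object $\pi_\otimes \pi^* \triv(\ph)$ over $[S/\Sigma_n]$, using base change for the stacky norm along the cartesian square $T \wequi S\times_{[S/\Sigma_n]}[\ul n/\Sigma_n]$ on one side and the identification of $R_+\wedge_{\Sigma_n}(\ph)$ with $\bar q^*$ (via Proposition \ref{prop:quot} and the projection formula) on the other. This is more formal: it avoids the reduction to generators entirely and makes naturality in $p$ transparent, since everything depends only on the classifying map $\bar q$. The cost is that you must invoke base change for norms along the (finite étale, hence representable) morphism $\bar q\colon S \to [S/\Sigma_n]$ of stacks; this is indeed available as part of the span-functoriality of $\SH$ on stacks from \cite[\S3.4]{bachmann-MGM}, which is the same machinery the paper implicitly uses when it rewrites the right-hand functor. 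Both routes are sound; yours buys generality (it works for any normed $\scr C^\otimes$ extending to such stacks with base change), while the paper's stays within the universal property of $\SH(S)$ and an explicit descent computation.
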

\begin{proof}
The right hand functor can alternatively be written as \[ \SH(S) \xrightarrow{(\ph)^{\wedge \ul n}} \SH([S/\Sigma_n]) \xrightarrow{[q/\Sigma_n]^*} \SH([R/\Sigma_n]) \wequi \SH(S). \]
In particular it is symmetric monoidal and preserves sifted colimits.
By the universal property of $\SH(S)$ \cite[Lemma 4.1]{norms}, it suffices to exhibit a symmetric monoidal equivalence of the two functors after composing with $\SmQP_{S+} \to \SH(S)$.
This reduces to exhibiting for $X_+ \in \SmQP_{S+}$ a natural equivalence \[ X^T_+ \wequi (R \times_{\Sigma_n} X^n)_+. \]
Both sides being étale sheaves, we may assume that $T = S^{\coprod n}$ whence $R = S \times \Sigma_n$, in which case the claim is clear.
\end{proof}

\begin{remark} \label{rmk:fundamental-diagrams-comparison}
It follows that under the equivalence $B_\et \Sigma_n \wequi \FEt^{\wequi,n}$ the functors
\[ (\Sm_S)_{\sslash \FEt^{\wequi,n}} \times \SH(S) \to \SH(S), ((T \xrightarrow{p} X), E) \mapsto (X \to S)_\sharp p_\otimes E_T \]
can be equivalently described as 
\[ (\Sm_S)_{\sslash B_\et \Sigma_n} \times \SH(S) \to \SH(S), ((R \to X), E) \mapsto (X \to S)_\sharp R_+ \wedge_{\Sigma_n} E^{\wedge \ul{n}}_X. \]
\end{remark}

\begin{lemma} \label{lemm:EE-model}
There is a canonical equivalence $\EE G \wequi \colim_{R \in B_\et G} R \in \Shv_{\Nis}(\Sm^G_S)$.
\end{lemma}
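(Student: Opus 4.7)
The plan is to exhibit $\EE G \to B_\et G$ (with $B_\et G$ carrying the trivial $G$-action) as a principal $G$-bundle in the $\infty$-topos $\Shv_{\Nis}(\Sm^G_S)$ and then invoke universality of colimits. The colimit in the statement is indexed by the category of elements $(\Sm_S)_{\sslash B_\et G}$, whose objects are pairs $(X, \alpha)$ with $\alpha \in B_\et G(X)$ classifying a $G$-torsor $R \to X$. I will view $X$ as carrying the trivial action and $R$ with its given free action, both as objects of $\Sm^G_S$.

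First, I would identify $\EE G/G \simeq B_\et G$ in $\Shv_{\Nis}(\Sm_S)$: the free $G$-action on $\EE G$ means the quotient $\EE G/G$ classifies $G$-torsors, which is $B_\et G$ by construction. Reinterpreted in $\Shv_{\Nis}(\Sm^G_S)$, this says that the quotient map $\EE G \to B_\et G$ is a $G$-torsor. In particular, its pullback along any $\alpha: X \to B_\et G$ is the torsor $R \to X$ classified by $\alpha$, giving a canonical equivalence
\[
\EE G \times_{B_\et G} X \simeq R \quad \text{in } \Shv_{\Nis}(\Sm^G_S).
\]

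Next, I would express $B_\et G$ as the colimit of representables via its canonical category-of-elements presentation $B_\et G \simeq \colim_{(X,\alpha)} X$. This holds in $\Shv_{\Nis}(\Sm_S)$, and since the ``trivial action'' inclusion $\Shv_{\Nis}(\Sm_S) \to \Shv_{\Nis}(\Sm^G_S)$ is a left adjoint (its right adjoint being $G$-fixed-points), the same colimit formula persists in $\Shv_{\Nis}(\Sm^G_S)$.

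Finally, universality of colimits in the $\infty$-topos $\Shv_{\Nis}(\Sm^G_S)$ yields
\begin{align*}
\EE G
&\simeq \EE G \times_{B_\et G} B_\et G \\
&\simeq \colim_{(X, \alpha)} \bigl( \EE G \times_{B_\et G} X \bigr) \\
&\simeq \colim_{(X, \alpha)} R,
\end{align*}
which is the desired equivalence. The main obstacle is verifying carefully that $\EE G \to B_\et G$ really is a torsor (equivalently, an effective epimorphism with $B_\et G$ realized as its quotient) in the Nisnevich $G$-equivariant topos; this should reduce to unpacking the construction of $\EE G$ recalled in Appendix~\ref{app:equiv-mot}, where $\EE G$ is built precisely so that this holds.
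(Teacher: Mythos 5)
Your overall strategy---realize $\EE G \to B_\et G$ as a $G$-torsor in the equivariant topos and then apply universality of colimits to the category-of-elements presentation of $B_\et G$---is viable and genuinely different from the paper's argument, and your fiber identification $\EE G \times_{B_\et G} X \wequi R$ is correct. However, the step you describe as holding ``by construction'' is exactly where the content of the lemma lives, and it is not definitional. $B_\et G$ is defined as the restriction of the stack $[*/G]$ to $\Sm_S$, whereas $\EE G$ is defined as the presheaf on $\Sm_S^G$ taking the value $*$ on schemes with free action and $\emptyset$ otherwise; the identification $\EE G/G \wequi B_\et G$---indeed, even the existence of a map $\EE G \to \triv\, B_\et G$---is a statement to be proved. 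Unwinding it, a map out of $\EE G$ is a coherent assignment, to each free $R \in \Sm_S^G$, of a $G$-torsor over some smooth $S$-scheme; so one needs the quotient $R/G$ to exist in $\Sm_S$, to be functorial in $R$, and one needs $R \to R/G$ to be an (\'etale-locally trivial) $G$-torsor. This is why the paper's proof begins by fixing the quasi-projective model $\Sm_S^G = \Sm_{S\sslash G}^{\qproj}$. Once that is in place, $R \mapsto (R/G,\, R \to R/G)$ is an equivalence $\Sm_S^G[\scr F_\triv] \wequi (\Sm_S)_{\sslash B_\et G}$, and that equivalence is precisely what makes your torsor claim and your fiber formula true.

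The paper uses the same equivalence but then stops: since $\EE G$ takes only the values $*$ and $\emptyset$, its category of elements is literally $\Sm_S^G[\scr F_\triv]$, so $\EE G \wequi \colim_{R \in \Sm_S^G[\scr F_\triv]} R$ by the colimit-of-representables formula, and the equivalence of indexing categories identifies this colimit with $\colim_{R \in B_\et G} R$ on the nose (in presheaves; one sheafifies at the end). So the descent step in your argument, while sound, is extra machinery sitting on top of the same key input---and that input is the part you deferred to the final sentence. To complete your route you would need to (i) fix the quasi-projective convention so that free quotients exist in $\Sm_S$, (ii) construct the map $\EE G \to \triv\, B_\et G$ via $R \mapsto [R \to R/G]$, and (iii) verify the fiber formula; at that point you will have reproved the equivalence of categories of elements, after which the direct argument is shorter.
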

\begin{proof}
For this proof we put $\Sm_S^G = \Sm_{S \sslash G}^\qproj$ and $\Sm_S = \Sm_S^\qproj$ (see the discussion in Section \ref{subsec:stacks-G-schemes}).
By this convention, whenever $X \in \Sm_S^G$ carries a free $G$-action, then the quotient exists and lies in $\Sm_S$.
This implies that the category of elements $(\Sm_S)_{\sslash B_\et G}$ is equivalent to $\Sm_S^G[\scr F_\triv]$.
It follows that \[ \EE G \wequi \colim_{R \to \EE G} R \wequi \colim_{R \in \Sm_S^G[\scr F_\triv]} R \wequi \colim_{R \in B_\et G} R, \] where the colimits are computed in $\PSh(\Sm_S^G)$.
The desired result about sheaves follows immediately.\footnote{The only reason this lemma is stated in terms of sheaves is that otherwise we are not free to choose the definition of $\Sm_S^G$.}
\end{proof}

\begin{corollary} \label{cor:d-vs-d}
The two functors $\D_H^\mot, \DD_H: \SH(S) \to \SH(S)$ are canonically equivalent.
\end{corollary}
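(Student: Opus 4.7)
The plan is to express both functors as colimits over a common indexing category, namely the category of elements of $B_\et H$, and then match them term by term.

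For the left hand side, by Example \ref{ex:D-gen-colimit} applied to the diagram $B_\et H \to B_\et \Sigma_n \wequi \FEt^{\wequi,n} \hookrightarrow \FEt$, I will write
\[
\D_H^\mot(E) \wequi \colim_{(X,\tilde p) \in (\Sm_S)_{\sslash B_\et H}} (X \to S)_\sharp p_\otimes p^*E,
\]
where $\tilde p: X \to B_\et H$ classifies an $H$-torsor and $p:R\times_H\underline{n}\to X$ is its associated rank-$n$ finite étale cover. Remark \ref{rmk:fundamental-diagrams-comparison} (combined with the observation that the $\Sigma_n$-torsor associated to an $H$-torsor $R$ is $R\times_H\Sigma_n$, which yields $(R\times_H\Sigma_n)_+\wedge_{\Sigma_n} E^{\wedge\ul n} \wequi R_+\wedge_H E^{\wedge\ul n}$) then rewrites the summand as $(X\to S)_\sharp\bigl(R_+\wedge_H E^{\wedge\ul n}|_X\bigr)$.

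For the right hand side, I will start from $\DD_H(E) = (\EE H_+ \wedge E^{\wedge \ul n})/H$ and apply Lemma \ref{lemm:EE-model} to obtain $\EE H \wequi \colim_{R} R$, where the indexing category is the category of elements of $B_\et H \in \PSh(\Sm_S^H)$. Inside the proof of that lemma this category is identified with $\Sm_S^H[\scr F_\triv]$, which in turn is equivalent to $(\Sm_S)_{\sslash B_\et H}$ via $R \mapsto (R/H, R\to R/H)$. Smashing with $E^{\wedge \ul n}$ and applying $(-)/H$ (both cocontinuous) gives
\[
\DD_H(E) \wequi \colim_{R \in (\Sm_S)_{\sslash B_\et H}} \bigl(R_+ \wedge E^{\wedge \ul n}\bigr)/H.
\]
To match the summands, for $R$ a free $H$-scheme with quotient $p:X\to S$, I claim there is a natural equivalence $(R_+\wedge E^{\wedge\ul n})/H \wequi p_\sharp\bigl(R_+\wedge_H E^{\wedge\ul n}|_X\bigr)$. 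This is essentially built into the construction of the geometric homotopy orbits: on free $H$-spectra the quotient functor agrees with $p_\sharp$ applied after descending along the torsor $R\to X$, as recalled in Appendix \ref{app:equiv-mot}.

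The main obstacle will be checking that these pointwise identifications assemble into an equivalence of functors, i.e., that the two descriptions give the same diagram $(\Sm_S)_{\sslash B_\et H}\to\SH(S)$ rather than just equivalent values. This amounts to unwinding the naturality of the enhanced smash power and of the geometric orbits construction along morphisms of $H$-torsors; both are formal consequences of how $(-)^{\wedge\ul n}$ and $(-)/H$ are built in Section \ref{sec:enhanced-smash-powers} and Appendix \ref{app:equiv-mot}. Once this coherent matching of diagrams is in place, the equivalence of motivic colimits is automatic by cocontinuity of the motivic colimit functor.
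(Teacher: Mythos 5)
Your proposal is correct and follows essentially the same route as the paper's proof: both sides are written as colimits over the category of elements of $B_\et H$ via Example \ref{ex:D-gen-colimit} and Remark \ref{rmk:fundamental-dia­grams-comparison}, and the key input identifying the colimit of the torsors with $\EE H$ is Lemma \ref{lemm:EE-model} together with cocontinuity of $(\ph)/H$ and $(\ph)\wedge E$. The coherence issue you flag at the end is exactly what Remark \ref{rmk:fundamental-diagrams-comparison} (which records a \emph{natural} equivalence of the two fundamental diagrams, natural in $p$) is there to dispose of.
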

\begin{proof}
By definition, $\D_H^\mot(\ph)$ is given by the colimit over $B_\et H \to B_\et \Sigma_n \wequi \FEt^{\wequi, n}$ of the first diagram of Remark \ref{rmk:fundamental-diagrams-comparison}. It thus remains to show that the colimit over $B_\et H$ of the second diagram is $\DD_H(\ph)$.
This is true since \[ \colim_{R \in B_\et H} \left[ (R \times_H \Sigma_n)_+ \wedge (\ph)^{\wedge \ul n} \right] \wequi (\colim_{R \in B_\et H} R)_+ \wedge_H (\ph)^{\wedge \ul n} \wequi \EE H_+ \wedge_H (\ph)^{\wedge \ul n} \wequi \DD_H(\ph), \] where in the middle we have used Lemma \ref{lemm:EE-model} (and we have also used that $(\ph)/H$ and $(\ph) \wedge E$ preserve colimits).
\end{proof}

\subsection{Monoidality of motivic extended powers}
We illustrate the convenience of the equivariant model by establishing further properties of $D_n^\mot$.
While it would be possible to do so in the motivic colimit model, this seems to require much more effort.

\begin{proposition} \label{prop:Dmot-oplax-monoidal}
The functor $D_n^\mot: \SH(S) \to \SH(S)$ is canonically op-lax symmetric monoidal.
\end{proposition}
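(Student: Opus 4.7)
The plan is to reduce the claim to the equivariant model via Corollary \ref{cor:d-vs-d}, which gives a canonical equivalence $D_n^\mot \simeq \DD_n = (\ph)_{\hh \Sigma_n} \circ (\ph)^{\wedge \ul n}$. It then suffices to construct an op-lax symmetric monoidal structure on this composite by endowing each factor with an appropriate structure.

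For the first factor, I would observe that the enhanced smash power $(\ph)^{\wedge \ul n}: \SH(S) \to \SH^{\Sigma_n}(S)$ is \emph{strongly} symmetric monoidal. Indeed, by its definition in \S\ref{sec:enhanced-smash-powers}, it coincides with $p_\otimes \circ p^* \circ \triv$, where $\triv: \SH(S) \to \SH([S/\Sigma_n])$ is the pullback along $[S/\Sigma_n] \to S$, $p^*$ is the pullback along the projection $[\ul n/\Sigma_n] \to [S/\Sigma_n]$, and $p_\otimes$ is the norm along the finite étale map $p: [\ul n/\Sigma_n] \to [S/\Sigma_n]$. Each factor is canonically strongly symmetric monoidal (pullbacks by general nonsense, and norms by \cite[\S3.2]{norms}).

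For the second factor, I would argue that $(\ph)_{\hh \Sigma_n}: \SH^{\Sigma_n}(S) \to \SH(S)$ is op-lax symmetric monoidal by invoking the principle that a cocontinuous functor whose right adjoint is lax symmetric monoidal is itself op-lax symmetric monoidal. First, $(\ph)_{\hh \Sigma_n}$ is cocontinuous: the colocalization $(\ph) \wedge \EE\Sigma_{n+}$ is cocontinuous (it is given by smashing with a fixed object in a presentable symmetric monoidal category), and the quotient $(\ph)/\Sigma_n$ is cocontinuous (as a left adjoint, see \S\ref{subsec:quotients}). Hence $(\ph)_{\hh \Sigma_n}$ admits a right adjoint $R$; unwinding definitions one identifies $R(Y) \simeq \iHom_{\SH^{\Sigma_n}(S)}(\EE\Sigma_{n+}, \triv Y)$. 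This $R$ inherits a canonical lax symmetric monoidal structure from three sources: (i) the diagonal $\EE\Sigma_n \to \EE\Sigma_n \times \EE\Sigma_n$ endows $\EE\Sigma_{n+}$ with a cocommutative coalgebra structure; (ii) the internal hom of a closed symmetric monoidal $\infty$-category is canonically lax symmetric monoidal in its second variable; (iii) $\triv$ is strongly symmetric monoidal. Composing with the strongly symmetric monoidal first factor yields the desired op-lax structure on $D_n^\mot$.

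The main obstacle is executing the above coherently in the $\infty$-categorical setting. In particular, one must rigorously identify the right adjoint of $(\ph)_{\hh \Sigma_n}$ with the cofree Borel construction (being mindful that $(\ph)/\Sigma_n$ is only defined on the free subcategory $\SH^{\Sigma_n}(S)[\scr F_\triv]$), and one must promote the informal chain of natural maps defining the lax structure on $R$ to an actual morphism of $\EE_\infty$-monoids in $\Fun(\SH(S), \SH(S))$ via a general machinery for lax/op-lax adjunctions between symmetric monoidal $\infty$-categories. The paper's remark immediately before the proposition makes clear that doing this kind of coherence bookkeeping directly in the motivic colimit model for $D_n^\mot$ would be substantially more painful, which is precisely why the equivariant reformulation is advantageous here.
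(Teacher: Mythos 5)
Your proposal is correct and follows essentially the same route as the paper: reduce to the equivariant model $\DD_n$ via Corollary \ref{cor:d-vs-d} and observe that it is a composite of op-lax symmetric monoidal functors, with the op-lax structure on homotopy orbits coming from doctrinal adjunction (a left adjoint of a lax/strong symmetric monoidal functor is op-lax). The only cosmetic difference is that the paper applies this adjunction argument to $(\ph)/\Sigma_n$ alone (Proposition \ref{prop:quot}(3), whose right adjoint is symmetric monoidal by Proposition \ref{prop:quot}(2)) and treats the smashing colocalization $\wedge\,\EE\Sigma_{n+}$ separately, whereas you apply it to all of $(\ph)_{\hh\Sigma_n}$ at once via the cofree Borel construction $\iHom(\EE\Sigma_{n+}, \triv(\ph))$.
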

\begin{proof}
Since $D_n^\mot \wequi \DD_n$ by Corollary \ref{cor:d-vs-d}, and the functor $\DD_n$ is a composite of oplax symmetric monoidal functors (see Proposition \ref{prop:quot}(3)), the result follows.
\end{proof}

\begin{definition} \label{def:op-lax-witness}
Let $A, B \in \scr C(S)$. We denote by $\beta_{A, B} = \beta_{A,B}^n: D_n^\mot(A \wedge B) \to D_n^\mot(A) \wedge D_n^\mot(B)$ the natural transformation witnessing op-lax monoidality of $D_n^\mot$, as established in Proposition \ref{prop:Dmot-oplax-monoidal}.

Consider the map $\alpha: * \to \FEt^{\wequi,n}$ corresponding to $S^{\coprod n} \in \FEt^{\wequi,n}_S$. This induces a natural transformation $\alpha_A = \alpha_A^n: A^{\wedge n} \wequi D^\mot_\alpha(A) \to D_n^\mot(A)$.
\end{definition}

\begin{remark} \label{rmk:alpha-naturality}
By construction, $\alpha_A$ is natural in $A$: if $f: A \to B \in \scr C(S)$, then the following square commutes
\begin{equation*}
\begin{CD}
A^{\wedge n} @>{\alpha_A}>> D_n^\mot(A) \\
@V{f^{\wedge n}}VV          @V{D_n^\mot(f)}VV \\
B^{\wedge n} @>{\alpha_A}>> D_n^\mot(B). \\
\end{CD}
\end{equation*}
\end{remark}

The following result will eventually be used to establish the Cartan formula for motivic power operations.

\begin{theorem}[Ur-Cartan relation.] \label{thm:ur-cartan-reln}
Let $E \in \SH(S)$. The composite \[ D_2^\mot(E \wedge E) \xrightarrow{\beta_{E,E}} D_2^\mot(E) \wedge D_2^\mot(E) \xrightarrow{\alpha_{D_2^\mot(E)}} D_2^\mot(D_2^\mot(E)) \] is homotopic to $D_2^\mot(\alpha_E)$.
\end{theorem}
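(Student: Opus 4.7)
My plan is to pass to the equivariant model via Corollary~\ref{cor:d-vs-d} and use the iterated wreath product identification from Proposition~\ref{prop:Dmot-iterated-wreath}. Write $D_2^\mot(A) \simeq F(N(A))$ with $N = (\ph)^{\wedge \underline 2}\colon \SH(S) \to \SH^{\Sigma_2}(S)$ strong symmetric monoidal (as a norm along $[\underline 2/\Sigma_2] \to [\ast/\Sigma_2]$) and $F = (\ph)_{\hh\Sigma_2}$ op-lax symmetric monoidal. The op-lax structure $\beta$ on $D_2^\mot$ (Proposition~\ref{prop:Dmot-oplax-monoidal}) then factors as $F(\mu)$ followed by the op-lax of $F$, where $\mu\colon N(E \wedge E) \simeq NE \wedge NE$ is the strong monoidality isomorphism. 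By Proposition~\ref{prop:Dmot-iterated-wreath}, $D_2^\mot(D_2^\mot E) \simeq D^\mot_{\Sigma_2 \wr \Sigma_2}(E)$; applying the same formula with a trivial inner factor and using $E \wedge E \simeq D^\mot_{\ast \to B_\et \Sigma_2}(E)$, we also get $D_2^\mot(E \wedge E) \simeq D^\mot_{\{e\}\wr \Sigma_2}(E)$.

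The strategy is to show that both composites in the Ur-Cartan relation realize the single canonical map induced by the subgroup inclusion $\{e\}\wr\Sigma_2 \hookrightarrow \Sigma_2 \wr \Sigma_2$ (the ``outer swap'' copy of $\Sigma_2$). For $D_2^\mot(\alpha_E)$, this follows from the functoriality of $D^\mot_{gen}$ in its indexing variable (Proposition~\ref{prop:D-mot-colim}(1)): the map $\alpha_E$ corresponds by Definition~\ref{def:op-lax-witness} to the basepoint inclusion $\ast \to B_\et \Sigma_2$, and applying $D_2^\mot$ converts this, via the iterated wreath identification, into the map $B_\et(\{e\}\wr\Sigma_2) \to B_\et(\Sigma_2 \wr \Sigma_2)$, whose underlying group map is precisely the stated inclusion. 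For $\alpha_{D_2^\mot E} \circ \beta_{E,E}$, I identify $\alpha_{D_2^\mot E}$ with the subgroup-inclusion map $\Sigma_2 \times \Sigma_2 \hookrightarrow \Sigma_2 \wr \Sigma_2$ (using Example~\ref{ex:D-mot-prod} to identify $D_2^\mot E \wedge D_2^\mot E \simeq D^\mot_{\Sigma_2 \times \Sigma_2}(E)$), and $\beta_{E,E}$ as the $F$-oplax (diagonal inclusion) composed with the strong-monoidal braiding of $N$; unwinding these through Proposition~\ref{prop:Dmot-iterated-wreath} produces the same canonical subgroup-inclusion map.

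The main obstacle I anticipate is the careful tracking of the strong-monoidal braiding $\mu$, which at the underlying spectrum level $E^{\wedge 4}$ is the swap of factors $2$ and $3$ and conjugates the outer-swap action $(13)(24)$ to the diagonal action $(12)(34)$; this permutation sits outside $\Sigma_2 \wr \Sigma_2 \subset \Sigma_4$, so the reconciliation with the canonical subgroup inclusion must happen through the coherences built into Proposition~\ref{prop:Dmot-iterated-wreath} (whose own proof uses the same braidings of the norm). The cleanest formulation is probably to exhibit both composites as cocones on a common $B_\et(\Sigma_2 \wr \Sigma_2)$-indexed fundamental diagram (Construction~\ref{cons:N}) and conclude by the universal property of the motivic colimit, thereby replacing braiding arithmetic with naturality of $N$.
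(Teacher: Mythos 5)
Your proposal follows essentially the same route as the paper: both pass to the equivariant model and identify all three corners as homotopy orbits of $E^{\wedge \underline{4}}$ along nested subgroups $H \subset K \subset W = \Sigma_2 \wr \Sigma_2$ of $\Sigma_4$ (your $\{e\}\wr\Sigma_2 \subset \Sigma_2\times\Sigma_2 \subset \Sigma_2\wr\Sigma_2$), reducing the statement to the commutativity of the evident triangle of canonical subgroup-induced maps between these orbits. The braiding/shuffle issue you correctly flag as the crux is exactly where the paper does its work: rather than routing through Proposition~\ref{prop:Dmot-iterated-wreath} and Example~\ref{ex:D-mot-prod}, it absorbs the shuffle by identifying $(\ph)^{\wedge \underline{2}} \circ (\ph)/\Sigma_2$ with $(\ph)/K \circ (\ph)^{\wedge \underline{2}}$ as $\Spc(S)_*$-module functors via their common restriction to smooth schemes, which simultaneously produces the identifications and their compatibility with $\alpha$ and $\beta$.
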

\begin{proof}
Denote by $W \subset \Sigma_4$ the wreath product $\Sigma_2 \wr \Sigma_2$.
$W$ has a unique normal subgroup $H$ of order $2$, and a unique normal non-cyclic subgroup $H \subset K \subset W$.
($H$ is generated by $(13)(24)$ and $K$ by $(13),(24)$.)
We claim that (1) $D_2^\mot(D_2^\mot(E)) \wequi E^{\wedge \ul 4}_{\hh W}$ (see also Proposition \ref{prop:Dmot-iterated-wreath}).
We claim that similarly (2) $D_2^\mot(E \wedge E) \wequi E^{\wedge \ul 4}_{\hh H}$ and (3) $D_2^\mot(E) \wedge D_2^\mot(E) \wequi E^{\wedge \ul 4}_{\hh K}$.
We finally claim that (4) commutative diagram in $\SH^{\Sigma_4}(S)$
\begin{equation*}
\begin{tikzcd}
E^{\wedge \ul 4}\wedge \Sigma_4/H_+ \ar[r] \ar[d] & E^{\wedge \ul 4} \wedge \Sigma_4/W_+ \\
E^{\wedge \ul 4} \wedge \Sigma_4/K_+ \ar[ur]
\end{tikzcd}
\end{equation*}
turns upon applying $(\ph)_{\hh\Sigma_4}$ into the commutative diagram
\begin{equation*}
\begin{tikzcd}
D_2^\mot(E \wedge E) \ar[r, "D_2^\mot(\alpha_E)"] \ar[d, "\beta_{E,E}"] & D_2^\mot(D_2^\mot(E)) \\
D_2^\mot(E) \wedge D_2^\mot(E) \ar[ur, "\alpha_{D_2^\mot(E)}" swap]
\end{tikzcd}
\end{equation*}
that we were after.

To prove the claims\tombubble{This seems unsatisfyingly messy}, note that the composite \[ F: \SH^{\Sigma_2}(S)[\scr F_\triv] \xrightarrow{(\ph)/\Sigma_2} \SH(S) \xrightarrow{(\ph)^{\wedge \ul 2}} \SH^{\Sigma_2}(S) \] is a $\Spc(S)_*$-module functor, provided we act by $\triv$ on $\SH^{\Sigma_2}(S)[\scr F_\triv]$ and by the norm on $\SH^{\Sigma_2}(S)$.
It is consequently obtained via universal properties from \[ \Sm_{S+}^{\Sigma_2}[\scr F_\triv] \to \Sm_{S+}^{\Sigma_2}, X \mapsto (X/\Sigma_2)^{\times \ul 2}_+. \]
Now observe that $(X/\Sigma_2)^{\times \ul 2} \wequi X^{\times \ul 2}/K$.
Tracing through the universal properties again, we see that $F$ is homotopic to the composite \[ F': \SH^{\Sigma_2}(S)[\scr F_\triv] \xrightarrow{(\ph)^{\wedge \ul 2}} \SH^{W}(S)[\scr F_{K\triv}] \xrightarrow{(\ph)/K} \SH^{W/K}(S) \wequi \SH^{\Sigma_2}(S). \]
From $F \wequi F'$ we deduce that \[ D_2^\mot(E)^{\wedge \ul 2} \wequi E^{\wedge \ul 4}_{\hh K}, \] with the inherited action by $\Sigma_2 \wequi W/K$.
This implies claims (1) and (3).
Claim (2) is proved similarly, and claim (4) now follows directly from the definitions.
\end{proof}

\begin{subappendices}
\section{Small and large presheaves} \label{app:small-and-large-presheaves}
In this appendix, by \emph{category} we shall always mean \emph{$\infty$-category}.
\subsection{Presheaves on large categories} \label{sec:small-large-presheaves} \todo{is there a reference?}
For universes $\bb U \subset \bb V$ we call a $(\bb U, \bb V)$-category a category $\scr C$ with $Ob(\scr C) \in \bb V$ and $\Map_{\scr C}(c, d) \in \bb U$ for all $c, d \in \scr C$. We fix universes $\bb U \in \bb V \in \bb W$. We call a $(\bb U, \bb U)$-category a \emph{small} category and a $(\bb U, \bb V)$-category a \emph{locally small} category. By a large category we mean any category that is not locally small, for example a $(\bb V, \bb V)$-category. Note that $\Cat$ (the category of small categories) is a locally small category, and $\widehat{\Cat}$ (the category of locally small categories) is a large category, more precisely a $(\bb V, \bb W)$-category.

If $\scr C$ is a locally small category, there are various possible categories of presheaves on $\scr C$.  First, we denote by $\widehat{\Spc}$ the large category (more precisely a $(\bb V, \bb W)$-category) of $\bb V$-small spaces. We define:

\begin{enumerate}
\item The $\infty$-category of \emph{presheaves of spaces} as $\PSh_1(\scr C) = \Fun(\scr C^\op, \Spc).$
\item The $\infty$-category of \emph{presheaves valued in $\widehat{\Spc}$, i.e., those that take values in $\bb V$-small spaces} as $\widehat{\PSh}(\scr C) = \Fun(\scr C^\op, \widehat{\Spc})$.
\item In $\PSh_1(\scr C)$ we denote by $\PSh_0(\scr C)$ the full subcategory generated under $\bb U$-small colimits by $\scr C$.
\item In $\widehat{\PSh}(\scr C)$ we denote by $\widehat{\PSh}_0(\scr C)$ the full subcategory generated under $\bb U$-small colimits by $\scr C$.
\end{enumerate}\begin{lemma} \label{lemm:P-large}
Let $\scr C$ be a locally small category.
\begin{enumerate}
\item The canonical functor $\PSh_1(\scr C) \to \widehat{\PSh}(\scr C)$ is fully faithful and preserves $\bb U$-small colimits. The restricted functor $\PSh_0(\scr C) \to \widehat{\PSh}_0(\scr C)$ is an equivalence.
\item Let $\scr D$ be any (possibly large) $\infty$-category with $\bb U$-small colimits. Let $\Fun_{\bb U}(\widehat{\PSh}_0(\scr C), \scr D) \subset \Fun(\widehat{\PSh}_0(\scr C), \scr D)$ denote the full subcategory on those functors preserving $\bb U$-small colimits. Then composition with the Yoneda embedding induces an equivalence $\Fun_{\bb U}(\widehat{\PSh}_0(\scr C), \scr D) \to \Fun(\scr C, \scr D)$.
\item Every object in $\PSh_0(\scr C)$ can be written as a $\bb U$-small colimit of representable presheaves.
\item The category $\PSh_0(\scr C)$ is locally small.
\end{enumerate}
\end{lemma}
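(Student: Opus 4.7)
The plan is to prove the four parts in the order (1), (3), (4), (2), since each later part naturally builds on the previous ones.

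For part (1), I would first observe that the inclusion $\Spc \hookrightarrow \widehat{\Spc}$ is fully faithful and preserves $\bb U$-small colimits. Mapping spaces in functor categories are computed as ends of mapping spaces in the target, and colimits are computed pointwise; both properties are therefore inherited by $\PSh_1(\scr C) \to \widehat{\PSh}(\scr C)$. For the equivalence $\PSh_0(\scr C) \to \widehat{\PSh}_0(\scr C)$, the essential image of the fully faithful $\bb U$-cocontinuous functor restricted to $\PSh_0(\scr C)$ contains all representables and is closed under $\bb U$-small colimits, so it contains $\widehat{\PSh}_0(\scr C)$; conversely it is contained in $\widehat{\PSh}_0(\scr C)$ by definition.

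For part (3), let $\scr E \subset \widehat{\PSh}_0(\scr C)$ be the full subcategory of objects expressible as a $\bb U$-small colimit of representables. Since $\widehat{\PSh}_0(\scr C)$ is by definition the closure of $\scr C$ under $\bb U$-small colimits, it suffices to show $\scr E$ is already closed under $\bb U$-small colimits. Given a $\bb U$-small diagram $I \to \scr E$, $i \mapsto F_i \simeq \colim_{J_i} h_{\ph}$ with each $J_i$ a $\bb U$-small diagram in $\scr C$, I would rewrite the iterated colimit via a Grothendieck construction as a single colimit indexed by a $\bb U$-small category. Part (4) then follows quickly: if $F \in \PSh_0(\scr C)$ is written as $F \simeq \colim_{i \in I} h_{c_i}$ with $I$ $\bb U$-small and $G \in \PSh_0(\scr C)$, then
\[ \Map(F, G) \simeq \lim_{i \in I} \Map(h_{c_i}, G) \simeq \lim_{i \in I} G(c_i), \]
which is a $\bb U$-small limit of $\bb U$-small spaces, hence $\bb U$-small.

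Finally, for part (2), I would construct the inverse to restriction along the Yoneda embedding by pointwise left Kan extension. Given $F: \scr C \to \scr D$, the slice categories $\scr C_{/X}$ for $X \in \widehat{\PSh}_0(\scr C)$ are essentially $\bb U$-small by (3), so the formula $\widehat{F}(X) = \colim_{(h_c \to X) \in \scr C_{/X}} F(c)$ makes sense in $\scr D$. That this extension preserves $\bb U$-small colimits is the main obstacle: it requires a Fubini-type argument showing that for any $\bb U$-small diagram $X_\bullet$ in $\widehat{\PSh}_0(\scr C)$ with colimit $X$, the natural comparison $\colim_\bullet \scr C_{/X_\bullet} \to \scr C_{/X}$ is cofinal in the appropriate sense. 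That $\widehat{F}$ restricted along Yoneda recovers $F$ is immediate from the terminal object of $\scr C_{/h_c}$, and uniqueness of a $\bb U$-cocontinuous extension is standard given (3). The main size-bookkeeping subtlety is that $\scr C$ is only locally small rather than small, but this is exactly what (3) and (4) make manageable.
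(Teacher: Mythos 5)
Parts (1), (3) and (4) of your proposal are essentially the paper's argument. One remark on (3): your ``rewrite the iterated colimit via a Grothendieck construction'' step requires choosing the presentations $J_i$ \emph{functorially} in $i \in I$, which is not automatic from merely knowing that each $F(i)$ admits some $\bb U$-small presentation; the paper sidesteps this by passing to the $\bb U$-small full subcategory $\scr C' \subset \scr C$ spanned by all objects occurring in the chosen presentations, noting that the cocontinuous extension $\PSh(\scr C') \to \PSh_1(\scr C)$ is fully faithful, that the diagram factors through its essential image, and that every object of $\PSh(\scr C')$ is a $\bb U$-small colimit of representables. Your route can be completed, but the functoriality issue is exactly the nontrivial point, so either address it or use the small-subcategory trick.

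The genuine gap is in (2). You assert that for $X \in \widehat{\PSh}_0(\scr C)$ the slice $\scr C_{/X}$ is essentially $\bb U$-small ``by (3)''. This is false: part (3) produces \emph{some} $\bb U$-small diagram in $\scr C_{/X}$ whose colimit in presheaves is $X$, but the comma category $\scr C_{/X}$ itself has objects the pairs $(c, x \in X(c))$ with $c$ ranging over all of $\scr C$, which may have a $\bb V$-sized collection of equivalence classes. For example, if $\scr C = \Spc$ and $X = h_*$, then $\scr C_{/X} \wequi \Spc$ is not essentially $\bb U$-small even though $X$ is representable. Consequently the pointwise formula $\widehat{F}(X) = \colim_{\scr C_{/X}} F$ does not a priori make sense in a $\scr D$ that only admits $\bb U$-small colimits --- and this size problem is precisely the one the lemma is meant to resolve, so it cannot be waved away. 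The argument is repairable: one must show that the chosen $\bb U$-small diagram $J \to \scr C_{/X}$ furnished by (3) is \emph{cofinal} (for instance by enlarging the universe so that $\scr C$ becomes small and invoking the characterization of cofinal maps into $\scr C_{/X}$ as those inducing an equivalence on presheaf colimits, together with the universe-independence of cofinality); only then does $\colim_{\scr C_{/X}} F$ exist in $\scr D$, computed by the $\bb U$-small colimit over $J$, and the rest of your Kan-extension argument goes through. The paper instead simply invokes \cite[Proposition 5.3.6.2 and its proof]{HTT}, which is engineered for exactly this situation.
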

\begin{proof}
(1) The first claim follows from the fact that $\Spc \to \widehat{\Spc}$ is fully faithful and preserves $\bb U$-small colimits.\NB{E.g. because if $X \to Y \in \widehat\Spc$ with $Y$ and all fibers $\bb U$-small, then $X$ is also $\bb U$-small.} The second claim is an immediate consequence of the definitions.

(2) This follows from \cite[Proposition 5.3.6.2 and its proof]{HTT}.

(3) Let $\scr P \subset \PSh_0(\scr C)$ be the full subcategory on $\bb U$-small colimits of representables. It suffices to show that $\scr P$ is closed under $\bb U$-small colimits. Let $F: I \to \scr P$ be a $\bb U$-small diagram. For each $i \in I$ choose a $\bb U$-small diagram $G_i: J_i \to \scr C \subset \PSh_0(\scr C)$ with $\colim G_i \wequi F(i)$. Let $\scr C'$ be the full subcategory of $\scr C$ on $\bigcup_i G_i Ob(J_i)$; this is a $\bb U$-small category. The functor $\scr C' \to \scr C$ induces a colimit preserving functor $e: \PSh_1(\scr C') \to \PSh_1(\scr C)$, which is fully faithful by (1). By construction $F$ factors through the essential image $\scr P'$ of $e$ and hence $\colim_I F \in \scr P'$. It remains to show that $\scr P' \subset \scr P$. But every object in $\PSh_1(\scr C') = \PSh(\scr C')$ is a $\bb U$-small colimit of objects in the Yoneda image of $\scr C'$, so this is clear.

(4) $\PSh_1(\scr C)$ is a $(\bb V, \bb V)$-category. It hence suffices to show that the mapping spaces in $\PSh_0(\scr C)$ are $\bb U$-small. This follows from (3).
\end{proof}

\begin{definition} \label{def:P-large-convention}
If $\scr C$ is a locally small category, we denote by $\PSh(\scr C)$ the equivalent locally small categories $\PSh_0(\scr C) \wequi \widehat{\PSh}_0(\scr C)$.
\end{definition}

We can strengthen Lemma \ref{lemm:P-large}(2) somewhat. Let $G: \scr D \to \scr E$ be a functor and $i: \scr C \hookrightarrow \scr E$ a full subcategory. We call a functor $F: \scr C \to \scr D$ a \emph{partial left adjoint to $G$} if there exists a transformation $u: i \to GF$ such that for all $c \in \scr C$ and $d \in \scr D$ the induced map $\Map_\scr{D}(Fc, d) \to \Map_\scr{E}(GFc, Gd) \to \Map_\scr{E}(c, Gd)$ is an equivalence.

\begin{lemma} \label{lemm:big-P-partial-adj}
Let $\scr C$ be locally small and $\scr D$ an $\infty$-category with $\bb U$-small colimits and $\bb V$-small mapping spaces. Suppose $f: \scr C \to \scr D$ is any functor. Then the functor $f^*: \scr D \to \widehat{\PSh}(\scr C)$ has as a partial left adjoint given by the $\bb U$-cocontinuous extension $f: \PSh(\scr C) \to \scr D$.
\end{lemma}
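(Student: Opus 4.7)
The plan is to produce the partial adjunction by combining the universal property of $\PSh(\scr C)$ from Lemma \ref{lemm:P-large}(2) with a Yoneda-style colimit argument. Write $F: \PSh(\scr C) \to \scr D$ for the $\bb U$-cocontinuous extension of $f$, which exists because $\scr D$ has $\bb U$-small colimits. Under the identification $\PSh(\scr C) \simeq \widehat{\PSh}_0(\scr C) \subset \widehat{\PSh}(\scr C)$ of Lemma \ref{lemm:P-large}(1), the inclusion preserves $\bb U$-small colimits, so everything takes place inside the ambient $\widehat{\PSh}(\scr C)$.

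Next I would check the partial adjunction equivalence on representables. For $c \in \scr C$ (viewed as an object of $\PSh(\scr C)$ via Yoneda) and $d \in \scr D$, we have $F(c) \simeq f(c)$ and
\[
\Map_{\widehat{\PSh}(\scr C)}(c, f^*d) \simeq (f^*d)(c) = \Map_{\scr D}(f(c), d) = \Map_{\scr D}(F(c), d),
\]
the first equivalence being the Yoneda lemma applied inside $\widehat{\PSh}(\scr C) = \Fun(\scr C^\op, \widehat{\Spc})$ (this is where the hypothesis that $\scr D$ has $\bb V$-small mapping spaces is used, so that $f^*d$ really lands in $\widehat{\Spc}$). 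Composing the inverse of the left equivalence with the identity on $\Map_{\scr D}(F(c),d)$, this yields in particular a unit map $u_c : c \to f^*F(c)$ in $\widehat{\PSh}(\scr C)$, natural in $c \in \scr C$.

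Then I would extend $u$ to all of $\PSh(\scr C)$. Since every $X \in \PSh(\scr C)$ is by Lemma \ref{lemm:P-large}(3) a $\bb U$-small colimit $X \simeq \colim_i c_i$ of representables, and since the inclusion $\PSh(\scr C) \hookrightarrow \widehat{\PSh}(\scr C)$ preserves $\bb U$-small colimits, there is a canonical map $u_X : X \simeq \colim_i c_i \to \colim_i f^*F(c_i) \to f^*F(X)$ (the second map exists because $F$ preserves $\bb U$-small colimits and $f^*$ is continuous—the latter since limits in $\widehat{\PSh}(\scr C)$ are pointwise and $\Map_{\scr D}(-,d)$ sends colimits to limits). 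Equivalently, one may obtain $u$ at once by applying Lemma \ref{lemm:P-large}(2) to the functor $\scr C \to \widehat{\PSh}(\scr C)_{/f^*F(-)}$ furnished by the representable case.

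Finally I would verify the partial adjunction equivalence on a general $X$. Using cocontinuity of $F$, continuity of $\Map_{\scr D}(-,d)$ and of $f^*$, and the representable case from the second paragraph, we compute
\[
\Map_{\scr D}(F(X), d) \simeq \lim_i \Map_{\scr D}(F(c_i), d) \simeq \lim_i \Map_{\widehat{\PSh}(\scr C)}(c_i, f^*d) \simeq \Map_{\widehat{\PSh}(\scr C)}(X, f^*d),
\]
and the chain of equivalences is precisely the map induced by $u_X$. The main nuisance is bookkeeping the size conventions, since $f^*d$ is genuinely large in general, which is why we must take the target of $f^*$ to be $\widehat{\PSh}(\scr C)$ rather than $\PSh(\scr C)$; once this is set up, the argument is a standard Yoneda/colimit manipulation.
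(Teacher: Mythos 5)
Your argument is correct in substance but follows a genuinely different route from the paper. The paper's proof never computes mapping spaces directly: it chooses a fully faithful, $\bb U$-cocontinuous embedding $j: \scr D \to \scr D'$ into a category with $\bb V$-small colimits (via \cite[Proposition 5.3.6.2]{HTT}), extends $jf$ to an honest adjunction $\bar f: \widehat{\PSh}(\scr C) \rightleftarrows \scr D' : \bar f^*$, identifies $f^* \wequi \bar f^* j$, and then restricts the genuine unit $\bar u$ to $\PSh(\scr C)$. The payoff of that detour is precisely the step that is thinnest in your write-up: the unit $u: i \Rightarrow f^*F$ must be produced as a coherent natural transformation of functors $\PSh(\scr C) \to \widehat{\PSh}(\scr C)$, not merely as a compatible family of maps $u_X$ built object-by-object from colimit presentations $X \wequi \colim_i c_i$; in the $\infty$-categorical setting such an objectwise construction is not by itself a construction of a transformation. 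Your parenthetical fix is the right idea but does not typecheck as written ($\widehat{\PSh}(\scr C)_{/f^*F(\ph)}$ is a slice over a varying object, and $f^*F$ is not cocontinuous, so Lemma \ref{lemm:P-large}(2) does not apply to it directly); the clean version of your approach is to cocontinuously extend the functor $\scr C \to \bigl(\widehat{\PSh}(\scr C) \downarrow f^*\bigr)$, $c \mapsto (c, f(c), u_c)$, into the comma category, which does admit $\bb U$-small colimits computed componentwise, and then read off $u$ from the arrow component. With that repair, your Yoneda-plus-colimits computation of $\Map_{\scr D}(FX,d) \wequi \Map_{\widehat{\PSh}(\scr C)}(X, f^*d)$ is the standard one and goes through; it is more elementary and self-contained than the paper's, at the cost of handling the coherence of $u$ by hand, whereas the paper trades that for an appeal to the existence of the cocomplete envelope $\scr D'$.
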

\begin{proof}
Let $j: \scr D \to \scr D'$ be a functor such that (1) $j$ is fully faithful, (2) $j$ preserves $\bb U$-small colimits, and (3) $\scr D'$ has $\bb V$-small colimits. This exists by \cite[Proposition 5.3.6.2]{HTT}. Applying the universal property of $\scr C \to \widehat{\PSh}(\scr C)$ to the composite $jf$, we obtain a $\bb V$-cocontinuous extension $\bar{f}: \widehat{\PSh}(\scr C) \to \scr D'$. The composite $\PSh(\scr C) \to \widehat{\PSh}(\scr C) \xrightarrow{\bar f} \scr D'$ preserves $\bb U$-small colimits and hence, by the universal property of $\PSh(\scr C)$, is determined by its restriction along $\scr C \to \PSh(\scr C)$. Thus the following diagram commutes
\begin{equation*}
\begin{CD}
\PSh(\scr C) @>f>> \scr D  \\
@ViVV               @VjVV  \\
\widehat{\PSh}(\scr C) @>\bar{f}>> \scr D'. \\
\end{CD}
\end{equation*}
Fully faithfulness of $j$ now implies that $f^* \wequi \bar f^* j$. The unit transform $\bar u$ of the adjunction $\bar f \dashv \bar f^*$ provides $u: i \Rightarrow \bar f^* \bar f i \wequi \bar f^* j f \wequi f^* f$. Fully faithfulness of $i$ and $j$ finally imply that $u$ exhibits $f$ as partial left adjoint to $f^*$.
\end{proof}

\subsection{Presheaves and slices} \todo{is there a reference?}
Recall that if $F: \scr C \to \scr D$ is a functor (with $\scr D$ possibly large) and $d \in \scr D$, then $\scr C_{/d} \to \scr C$ is a cartesian fibration classifying the functor $\scr C^\op \to \widehat{\Spc}, a \mapsto \Map(F(\ph), b)$. Note that this category could a priori be as large as $\scr D$.

Let $\scr C$ be a small category and denote by $F: \scr C \to \Fun(\scr C^\op, \widehat{\Cat}_\infty)$ the composition of the Yoneda embedding with the inclusion $\Spc \to \widehat{\Cat}_\infty$. Suppose given a functor $\scr F: \scr C^\op \to \widehat{\Cat}_\infty$. We denote by $\scr C_{/\scr F}$ the slice category along $F$ after identifying $\scr C$ with its Yoneda image. Note that objects of $\scr C_{/\scr F}$ can be informally described as pairs $(c, X)$ with $c \in \scr C$ and $X: Fc \to \scr F$, i.e. $X \in \scr F(c)$.

\begin{lemma} \label{lemm:presheaf-slice-cat}
\begin{enumerate}
\item $\scr C_{/\scr F} \to \scr C$ is the cartesian fibration classifying the functor $\scr C \to \widehat{\Spc}$, $c \mapsto \scr F(c)^\wequi$.
\item The large category $\scr C_{/\scr F}$ is locally small.
\end{enumerate}
\end{lemma}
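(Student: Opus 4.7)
The plan is to handle the two parts separately, with the bulk of the work for (1) being a Yoneda-type identification and for (2) a homotopy-group bookkeeping argument.

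For part (1), I would present $\scr C_{/\scr F}$ as the pullback
\[ \scr C_{/\scr F} \wequi \scr C \times_{\Fun(\scr C^\op, \widehat{\Cat}_\infty)} \Fun(\scr C^\op, \widehat{\Cat}_\infty)_{/\scr F}, \]
along the functor $F$. The projection on the right is a slice projection, hence a right fibration, and right fibrations are stable under pullback; this already gives that $\scr C_{/\scr F} \to \scr C$ is a right fibration, i.e.\ a cartesian fibration with $\infty$-groupoid fibers. To identify the classifying functor, I would use the standard fact that a slice $\scr D_{/y} \to \scr D$ classifies $d \mapsto \Map_{\scr D}(d, y)$; pulling back along $F$ then classifies $c \mapsto \Map_{\Fun(\scr C^\op, \widehat{\Cat}_\infty)}(Fc, \scr F)$. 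Finally I would evaluate this mapping space using the adjunction $\Spc \rightleftarrows \widehat{\Cat}_\infty$ (right adjoint $(\ph)^\wequi$), applied pointwise at the level of functor categories, to obtain
\[ \Map_{\Fun(\scr C^\op, \widehat{\Cat}_\infty)}(Fc, \scr F) \wequi \Map_{\Fun(\scr C^\op, \widehat{\Spc})}(Fc, \scr F^\wequi) \wequi \scr F(c)^\wequi, \]
the last step being the ordinary Yoneda lemma for presheaves of spaces.

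For part (2), I would compute mapping spaces in $\scr C_{/\scr F}$ via the standard fiber sequence for slice categories. Given objects $(c_1, X_1), (c_2, X_2) \in \scr C_{/\scr F}$, this takes the form
\[ \Map_{\scr C_{/\scr F}}((c_1, X_1), (c_2, X_2)) \to \Map_{\scr C}(c_1, c_2) \to \scr F(c_1)^\wequi, \]
with fiber taken over the point $X_1 \in \scr F(c_1)^\wequi$ (using the identification from part (1) of the fiber of the right fibration). The middle term is small because $\scr C$ is locally small. Because $\scr F$ takes values in $\widehat{\Cat}_\infty$, the category $\scr F(c_1)$ is locally small, so the based loop space $\Omega_{X_1}\scr F(c_1)^\wequi \wequi \mathrm{Aut}_{\scr F(c_1)}(X_1)$ is small, and hence $\pi_n(\scr F(c_1)^\wequi, X_1)$ is small for every $n \geq 1$. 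The long exact sequence of homotopy groups then sandwiches each $\pi_n$ of the fiber between a small subquotient of $\pi_n(\Map_{\scr C}(c_1, c_2))$ and a small subquotient of $\pi_{n+1}(\scr F(c_1)^\wequi, X_1)$, proving smallness.

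The main obstacle beyond routine unwinding will be the naturality in (1): the pointwise identification $\Map(Fc, \scr F) \wequi \scr F(c)^\wequi$ is immediate, but to identify the full classifying functor I need this equivalence to be natural in $c$. The cleanest route is to carry out the $\Spc \rightleftarrows \widehat{\Cat}_\infty$ adjunction entirely at the level of presheaf categories $\Fun(\scr C^\op, \ph)$, so that the Yoneda step happens inside presheaves of spaces where naturality is automatic; this avoids any need to manipulate the classifying functor of a right fibration by hand.
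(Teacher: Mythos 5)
Your proof of part (1) is essentially the paper's: both identify the classifying functor of the slice as $c \mapsto \Map_{\Fun(\scr C^\op, \widehat{\Cat}_\infty)}(Fc, \scr F)$ and then compute this via the adjunction $\widehat{\Spc} \rightleftarrows \widehat{\Cat}_\infty$ (maximal subgroupoid on the right) followed by the Yoneda lemma for presheaves of spaces; your remark about running the adjunction at the level of presheaf categories so that naturality in $c$ comes for free is a sensible way to make explicit what the paper leaves implicit. For part (2) you take a genuinely different route. The paper argues by reduction: it chooses a fully faithful subfunctor $\scr F' \subset \scr F$ valued in \emph{small} categories containing the two objects in question (possible because $\scr C$ is small and each $\scr F(c)$ is locally small), notes that the mapping space is unchanged, and concludes from the small case. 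You instead compute the mapping space directly from the fiber sequence
\[ \Map_{\scr C_{/\scr F}}((c_1, X_1), (c_2, X_2)) \to \Map_{\scr C}(c_1, c_2) \to \scr F(c_1)^\wequi \]
(fiber over $X_1$, the second map being $f \mapsto \scr F(f)(X_2)$) and do homotopy-group bookkeeping, the key input being that $\Omega_{X_1}\scr F(c_1)^\wequi \wequi \mathrm{Aut}_{\scr F(c_1)}(X_1)$ is small by local smallness of $\scr F(c_1)$. This is correct; the only point to phrase carefully is $\pi_0$, where there is no exact sequence of groups — say instead that the fiber over each $f \in \Map_{\scr C}(c_1,c_2)$ is the path space $\mathrm{Path}_{\scr F(c_1)^\wequi}(X_1, \scr F(f)(X_2))$, which is empty or a torsor over the small loop space, and that the base has small $\pi_0$. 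Your argument buys a slightly stronger statement (it needs only local smallness of $\scr C$, not smallness, which the paper's construction of $\scr F'$ uses), while the paper's reduction is shorter once the existence of the small fully faithful subfunctor is granted.
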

\begin{proof}
(1) As recalled above, $\scr C_{/\scr F} \to \scr C$ is the cartesian fibration classifying the functor $c \mapsto \Map_{\Fun(\scr C^\op, \widehat{\Cat}_\infty)}(Fc, \scr F)$.  We have equivalences
\begin{align*}
 \Map_{\Fun(\scr C^\op, \widehat{\Cat}_\infty)}(Fc, \scr F) &\wequi \Map_{\Fun(\scr C^\op, \widehat{\Spc}_\infty)}(Fc,  \scr F^\wequi) \\
                                                            &\wequi  \scr F(c)^\wequi,
\end{align*}
where the first equivalence is because taking maximal subgroupoid is right adjoint to the inclusion $\Spc \to \Cat_\infty$ and the second equivalence is the Yoneda lemma.

(2) In other words, mapping spaces in $\scr C_{/\scr F}$ are small.
Let $(c, X), (d, Y) \in \scr C_{/\scr F}$.
There exists a fully faithful subfunctor $\scr F' \subset \scr F$ valued in small categories containing $X$ and $Y$.
Since $\Map_{\scr C_{\scr F}}((c,X),(d,Y)) \wequi \Map_{\scr C_{\scr F'}}((c,X),(d,Y))$ (e.g. by (1)), we have reduced to $\scr F$ small, and the result is clear.
\end{proof}

Now consider the categories $\PSh(\scr C_{/\scr D})$ (see Definition \ref{def:P-large-convention}) and $\widehat{\PSh}(\scr C)$. Note that $\scr D^\wequi$ naturally defines an object of $\widehat{\PSh}(\scr C)$, and that $\PSh(\scr C) \subset \widehat{\PSh}(\scr C)$ is a full subcategory. We denote by $\PSh(\scr C)_{/\scr D}$ the full subcategory of $\widehat{\PSh}(\scr C)_{/\scr D^\wequi}$ (ordinary slice category) on objects $(P, E)$ with $P \in \PSh(\scr C)$. Since $\widehat{\PSh}(\scr C)$ has colimits so does $\widehat{\PSh}(\scr C)_{/\scr D^\wequi}$; consequently by Lemma \ref{lemm:P-large} the functor $\scr C_{/\scr D} \to \widehat{\PSh}(\scr C)_{/\scr D^\wequi}$ induces a functor $\PSh(\scr C_{/\scr D}) \to \widehat{\PSh}(\scr C)_{/\scr D^\wequi}$.

\begin{lemma} \label{lemm:psh-slice-comp}
The functor $\PSh(\scr C_{/\scr D}) \to \widehat{\PSh}(\scr C)_{/\scr D^\wequi}$ factors through $\PSh(\scr C)_{/\scr D} \subset \widehat{\PSh}(\scr C)_{/\scr D^\wequi}$ and induces an equivalence $\PSh(\scr C_{/\scr D}) \wequi \PSh(\scr C)_{/\scr D}$.
\end{lemma}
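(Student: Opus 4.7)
The plan is to verify the two claims in order, exploiting the cocompleteness of the target. For the factorization, observe that $\widehat{\PSh}(\scr C)_{/\scr D^\wequi}$ is cocomplete (slices of cocomplete categories are cocomplete) and locally contains $\scr C_{/\scr D}$ via the composite with the Yoneda embedding. The universal property of $\PSh(\scr C_{/\scr D})$ (Lemma \ref{lemm:P-large}(2)) thus extends $\scr C_{/\scr D} \to \widehat{\PSh}(\scr C)_{/\scr D^\wequi}$ to a cocontinuous functor $\Psi: \PSh(\scr C_{/\scr D}) \to \widehat{\PSh}(\scr C)_{/\scr D^\wequi}$. Since the forgetful to $\widehat{\PSh}(\scr C)$ preserves colimits and $\PSh(\scr C) \subset \widehat{\PSh}(\scr C)$ is closed under small colimits (Lemma \ref{lemm:P-large}(3)), while representables land in $\PSh(\scr C)_{/\scr D}$ tautologically, the essential image of $\Psi$ lies in $\PSh(\scr C)_{/\scr D}$.

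For essential surjectivity, given $(P, E: P \to \scr D^\wequi) \in \PSh(\scr C)_{/\scr D}$, I would write $P \simeq \colim_{(c,f) \in \scr C_{/P}} c$ as the canonical colimit of representables. The map $E$ then composes to produce a functor $\scr C_{/P} \to \scr C_{/\scr D}$, $(c, f) \mapsto (c, E \circ f)$. Since colimits in slice categories are created by the forgetful functor, the colimit of this functor (viewed in $\PSh(\scr C_{/\scr D})$) is sent by $\Psi$ to an object whose underlying presheaf is $P$ and whose structure map is $E$, as required.

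For full faithfulness, I plan to reduce to representables via cocontinuity of $\Psi$ in both variables; the mapping spaces on both sides are cocontinuous in the first variable and send colimits in the second variable to limits, so one can reduce to $F_1 = (c,x)$ and $F_2 = (d,y)$ for objects of $\scr C_{/\scr D}$. By the Yoneda lemma, $\Map_{\PSh(\scr C_{/\scr D})}((c,x),(d,y)) \simeq \Map_{\scr C_{/\scr D}}((c,x),(d,y))$, and by Lemma \ref{lemm:presheaf-slice-cat}(1) this is the fiber over $x$ of the map $\Map_\scr{C}(c,d) \to \scr D(c)^\wequi$, $f \mapsto f^*y$. On the other side, $\Psi$ sends $(c,x)$ to the representable $c$ together with the map $c \to \scr D^\wequi$ classifying $x \in \scr D(c)^\wequi$, so by Yoneda $\Map_{\PSh(\scr C)_{/\scr D}}(\Psi(c,x),\Psi(d,y))$ is the fiber over $x$ of $\Map_\scr{C}(c,d) \to \scr D(c)^\wequi$ given by the same formula. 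These agree by inspection.

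The main obstacle I anticipate is carefully unwinding, on representables, what $\Psi$ does and matching the two fiber descriptions; once both sides are identified as fibers of the same map $\Map_\scr{C}(c,d) \to \scr D(c)^\wequi$, the comparison is forced by the construction of $\Psi$ and the definition of $\scr C_{/\scr D}$ as the cartesian fibration classifying $\scr D^\wequi$.
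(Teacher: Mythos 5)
Your factorization and essential-surjectivity arguments are exactly the paper's (colimits in the slice are created on the source, and every $(P,E)$ is the colimit of $(c, E\circ f)$ over $\scr C_{/P}$), and your identification of mapping spaces between representables as fibers of $\Map_{\scr C}(c,d)\to \scr D(c)^\wequi$ is also the computation the paper performs. The gap is in the reduction step for full faithfulness. You assert that mapping spaces ``are cocontinuous in the first variable and send colimits in the second variable to limits''; the variances here are backwards ($\Map(-,Y)$ sends colimits to limits, and $\Map(X,-)$ preserves colimits only when $X$ is completely compact), and with the variances corrected the reduction of the \emph{second} variable to representables is not free. On the source side it holds because representables are completely compact in $\PSh(\scr C_{/\scr D})$; but on the target side you need to know that $\Psi(c,x)$ is completely compact in $\PSh(\scr C)_{/\scr D}$, i.e.\ that $\Map_{\PSh(\scr C)_{/\scr D}}(\Psi(c,x), -)$ preserves small colimits. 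You never establish this: your fiber computation is carried out only when the second argument is also representable.

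This is precisely the point the paper's proof supplies. It computes, for \emph{arbitrary} $\beta: F \to \scr D^\wequi$ with $F \in \PSh(\scr C)$, that
\[
\Map_{\PSh(\scr C)_{/\scr D}}\bigl((c,\alpha), (F,\beta)\bigr) \;\wequi\; F(c) \times_{\scr D(c)^\wequi} \{\alpha\},
\]
and then observes that this preserves colimits in $F$ because colimits in the slice are computed on the source and colimits in $\Spc$ are universal (so fiber products over a fixed base preserve them). That gives complete compactness of the objects $\scr C_{/\scr D} \subset \PSh(\scr C)_{/\scr D}$, after which your reduction (and hence full faithfulness, together with generation) goes through by the standard recognition criterion for presheaf categories. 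The fix to your argument is therefore small: extend your fiber description from representable $(d,y)$ to arbitrary $(F,\beta)$ and note the resulting cocontinuity in $F$; as written, however, the full-faithfulness step does not close.
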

\begin{proof}
The functor $\PSh(\scr C_{/\scr D}) \to \widehat{\PSh}(\scr C)_{/\scr D^\wequi}$ preserves colimits. Since colimits in section categories are computed on the source \cite[Corollary 5.1.2.3]{HTT}, the functor factors as claimed. It suffices to show that $\PSh(\scr C)_{/\scr D}$ is generated under small colimits by the completely compact objects $\scr C_{/\scr D} \subset \PSh(\scr C)_{/\scr D}$. Let $c \in \scr C$, $F \in \PSh(\scr C)$ and $\alpha: c \to \scr D, \beta: F \to \scr D$. We have $\Map_{\PSh(\scr C)_{/\scr D}}(\alpha, \beta) = F(c) \times_{\scr D(c)^\wequi} \{\alpha\}$. Since colimits in section categories are computed on the source and colimits in $\Spc$ are universal, this expression preserves colimits in $F$. It follows that the objects $\scr C_{/\scr D} \subset \PSh(\scr C)_{/\scr D}$ are completely compact. Given $\beta: F \to \scr D \in \PSh(\scr C)_{/\scr D}$, we have $\beta \wequi \colim_{\gamma \in \scr C_{/F}} \beta \circ \gamma$ (again since colimits are computed on the source), so the objects $\scr C_{/\scr D} \subset \PSh(\scr C)_{/\scr D}$ generate. This concludes the proof.
\end{proof}

\section{Some stacks left Kan extended from smooth affines}
\label{subapp:kan-extension}

In this appendix, we slightly extend the growing literature on left Kan extensions of invariants from smooth schemes. Let $S$ be an affine scheme and $\scr X \in \PSh(\Aff_S)$.
We call $\scr X$ \emph{left Kan extended from smooth affines} if the counit map $e_!e^* \scr X \to \scr X$ is an equivalence, where \[ e_!: \PSh(\SmAff_S) \adj \PSh(\Aff_S): e^*. \]

\begin{lemma} \label{lemm:LKE-primitive}
Let $G \to S$ be a finite étale group scheme, $X$ a finite étale $G$-scheme, and $Y$ a smooth $S$-scheme\NB{Or smooth stack with quasi-affine diagonal...}.
Then the Weil restriction $Q(G, X, Y)$ of $Y \times [X/G]$ along $[X/G] \to [*/G]$ is left Kan extended from smooth affines.
\end{lemma}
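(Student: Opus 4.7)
The plan is to verify that $Q := Q(G,X,Y)$ is a smooth algebraic stack with quasi-affine diagonal over $S$, and then invoke \cite[Proposition A.0.4]{EHKSY3} (the same result the paper is already using in the proof of Lemma \ref{lemm:bundle-technical}).

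First I would unwind the definition to describe $Q$ concretely: for an affine $S$-scheme $T$, the groupoid $Q(T)$ parametrizes pairs $(P \to T, f)$ where $P \to T$ is a $G$-torsor (equivalently, a map $T \to [*/G]$) and $f \colon P \times^G X \to Y$ is an $S$-morphism; this is just the Weil restriction of $Y \times [X/G] \to [X/G]$ along the finite étale morphism $p \colon [X/G] \to [*/G]$ of stacks. Because smoothness and quasi-affineness of diagonals are étale-local on the target, it is enough to verify these properties after pulling back along the étale atlas $* \to [*/G]$. On that atlas, $p$ trivialises as an ordinary finite étale morphism $X \to *$ of $S$-schemes, and the Weil restriction of $Y$ along it is just the classical Weil restriction $R_{X/S}(Y_X)$.

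Next I would verify the two properties required. For smoothness, the classical fact that Weil restriction along a finite étale morphism of affines preserves smoothness takes care of the fibre over $* \to [*/G]$, and composing with the smooth morphism $[*/G] \to S$ (smooth because $G \to S$ is finite étale) yields smoothness of $Q \to S$. For the diagonal, the same étale-local reduction identifies the relative diagonal of $Q \to [*/G]$ with the Weil restriction of $\Delta_{Y/S}$; since $Y$ is a smooth $S$-scheme, $\Delta_{Y/S}$ is an immersion, and Weil restriction of an immersion along a finite étale morphism of affines is a (quasi-compact) immersion, in particular quasi-affine. Composing with the affine diagonal of $[*/G] \to S$ then gives that $\Delta_{Q/S}$ is quasi-affine.

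The main obstacle I anticipate is the bookkeeping around Weil restriction in the stacky setting: one needs to know that $p_\ast$ exists for the class of stacks under consideration, commutes with the étale pullbacks used in the reductions above, and preserves both smoothness and quasi-affineness of diagonals. These are standard once unwound but require careful attention. With them in hand, \cite[Proposition A.0.4]{EHKSY3} applies directly and closes the proof.
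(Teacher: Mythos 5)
Your proposal is correct and follows essentially the same route as the paper: reduce via \cite[Proposition A.0.4]{EHKSY3} to showing $Q(G,X,Y)$ is a smooth algebraic stack with quasi-affine diagonal, factor through $[*/G]$, and check both properties after pulling back along the étale atlas $* \to [*/G]$, where the Weil restriction becomes the classical one (étale-locally a product of copies of $Y$). The only cosmetic difference is that you identify the relative diagonal with the Weil restriction of $\Delta_{Y/S}$ directly, whereas the paper simply observes that the étale-local model is schematic and hence has quasi-affine diagonal.
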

\begin{proof}
Put $\scr X := Q(G, X, Y)$.
By \cite[Proposition A.0.4]{EHKSY3} it suffices to show that $\scr X$ is a smooth algebraic stack with quasi-affine diagonal.
For this it suffices to prove that (1) $\scr X \to [*/G]$ is smooth with quasi-affine diagonal and (2) $[*/G] \to *$ is smooth with affine diagonal\NB{diagonal of $X \to Y \to Z$ factors as $X \to X \times_Y X \to X \times_Z X$ and the latter map is pullback of $Y \to Y \times_Z Y$}.
Since $[X/G] \to [*/G]$ is finite étale, the Weil restriction $\scr X$ is étale locally on $[*/G]$ given by $[*/G] \times Y^n$, which is smooth and schematic, hence has quasi-affine diagonal; (1) follows since it is fppf local on $[*/G]$.
(2) is clear.\NB{The map $* \to [*/G]$ is a covering by schemes, so the stack is algebraic.
The pullback of this covering along itself $G \to *$ which is finite étale, whence $[*/G] \to *$ is smooth (this property being étale local on the source).
Finally the diagonal $[X/G] \to [X/G] \times [X/G]$ is covered by the action map $X \times G \to X \times X$, which is certainly affine if $X=*$, since $G$ is.}
\end{proof}

\begin{example} \label{ex:Bet-LKE}
Taking $Y = X = *$, we see that $Q(G,*,*) = [*/G]$ is left Kan extended from smooth affines.
\end{example}
\begin{example} \label{ex:Q-weil-res}
Taking $G=\Sigma_n$, $X = \ul{n}$ and $Y$ arbitrary, $[X/G] \to [*/G]$ is the universal finite étale scheme of degree $n$ and $Q(\Sigma_n, \ul{n}, Y)$ is the stack of finite étale schemes with a morphism to $Y$ (which is thus left Kan extended from smooth affines if $Y$ is smooth over $S$).
\end{example}

\begin{lemma} \label{lemm:stk-coprod}
Let $\scr X_i$ be a sequence of stacks.
Then \[ L_\Sigma \coprod^{\PSh}_i \scr X_i \wequi \colim_i \coprod_{i \le n}^{\Stk} \scr X_i, \] where on the left hand side we take the coproduct in presheaves and on the right hand side we mean the filtered colimit in presheaves, but the finite coproduct in stacks.
\end{lemma}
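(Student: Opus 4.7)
The plan rests on three standard facts: (i) $L_\Sigma$, being left adjoint to the inclusion $\PSh_\Sigma \hookrightarrow \PSh$, preserves all colimits; (ii) stacks (e.g.\ étale sheaves of groupoids on $\Aff_S$) preserve finite products, hence lie in $\PSh_\Sigma$, so finite $\Stk$-coproducts of stacks coincide with finite $\PSh_\Sigma$-coproducts, which are in turn $L_\Sigma$ of the presheaf coproducts; and (iii) the inclusion $\PSh_\Sigma \hookrightarrow \PSh$ preserves filtered colimits, because a filtered colimit of finite-product-preserving presheaves is itself finite-product-preserving (finite products commute with filtered colimits in $\Spc$).

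With these in hand, the computation runs as follows. In any cocomplete category one has the standard identification of an infinite coproduct as a filtered colimit of finite coproducts; applied in $\PSh$ this gives
\[ \coprod_i^{\PSh} \scr X_i \;\simeq\; \colim_n \coprod_{i \le n}^{\PSh} \scr X_i. \]
Applying $L_\Sigma$ and invoking (i), we get
\[ L_\Sigma \coprod_i^{\PSh} \scr X_i \;\simeq\; \colim_n L_\Sigma \coprod_{i \le n}^{\PSh} \scr X_i, \]
the colimit being computed a priori in $\PSh_\Sigma$. By (ii), each term satisfies $L_\Sigma \coprod_{i \le n}^{\PSh} \scr X_i \simeq \coprod_{i \le n}^{\Stk} \scr X_i$. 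Finally, by (iii) the filtered colimit in $\PSh_\Sigma$ agrees with the filtered colimit in $\PSh$, yielding the asserted equivalence.

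The main obstacle is the precise meaning of ``stack'' and the corresponding verification that a finite $\Stk$-coproduct really is $L_\Sigma$ of the presheaf coproduct. In the setting of interest here --- algebraic stacks viewed as étale sheaves of groupoids on $\Aff_S$ --- this is immediate, because étale sheafification preserves finite products and agrees with $L_\Sigma$ on a cofinal system of objects built from representables under disjoint unions. If one instead reads $\Stk = \PSh_\Sigma$, then (ii) is tautological and the whole argument reduces to (i) and (iii).
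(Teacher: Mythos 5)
Your proposal is correct and follows essentially the same route as the paper: both reduce to the finite case via cocontinuity of $L_\Sigma$ and the observation that a filtered colimit of $\Sigma$-presheaves is a $\Sigma$-presheaf (since filtered colimits of spaces commute with finite products). The only difference is that the paper verifies your fact (ii) concretely --- a map $T \to \scr X_1 \amalg^{\Stk} \scr X_2$ is a clopen decomposition $T = T_1 \amalg T_2$ together with maps $T_i \to \scr X_i$, which is exactly a section of the $L_\Sigma$-sheafification of the presheaf coproduct --- whereas you assert it with a brief gesture; that explicit check is worth including, since it is what guarantees the $\PSh_\Sigma$-coproduct of stacks needs no further \'etale sheafification.
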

\begin{proof}
Since filtered colimits of spaces preserve finite products, the right hand side is a $\Sigma$-presheaf.
It follows that it suffices to show that $L_\Sigma \left( \scr X_1 \amalg^\PSh \scr X_2 \right) \wequi \scr X_1 \amalg^\Stk \scr X_2$.
A morphism $T \to \scr X_1 \amalg \scr X_2$ is the same as a disjoint union decomposition $T \wequi T_1 \amalg T_1$ and morphisms $T_i \to \scr X_i$.
This is exactly a section of the sheafification of $\scr X_1 \amalg^\PSh \scr X_2$ with respect to the topology of disjoint unions, i.e. $L_{\Sigma}$ \cite[Lemma 2.4]{norms}.
\end{proof}

Denote by $Q(\FEt, Y)$ the stack of finite étale schemes together with a morphism to some fixed scheme $Y$.
\begin{corollary} \label{cor:stk-fet}
For $Y$ smooth over $S$ we have \[ L_{\Sigma}\coprod_{n \geq 0} Q(\Sigma_n, \ul{n}, Y) \wequi Q(\FEt, Y). \]
In particular \[ L_\Sigma \coprod_{n \ge 0} B_\et \Sigma_n \wequi \FEt^\wequi. \]
\end{corollary}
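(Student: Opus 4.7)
The plan is to combine Example \ref{ex:Q-weil-res}, which identifies $Q(\Sigma_n,\ul n,Y)$ with the stack of finite étale schemes of constant rank $n$ equipped with a map to $Y$, with the decomposition of an arbitrary finite étale scheme into pieces of locally constant rank, and then to feed this into Lemma \ref{lemm:stk-coprod}.

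First I would check that there is an equivalence of stacks
\[ Q(\FEt, Y) \wequi \coprod_{n \ge 0}^{\Stk} Q(\Sigma_n, \ul n, Y). \]
The right hand side is the filtered colimit (in presheaves) of the finite stack coproducts $\coprod_{n\le N}^\Stk Q(\Sigma_n,\ul n,Y)$; on an affine test scheme $T$, a $T$-point of $\coprod_{n\le N}^\Stk Q(\Sigma_n,\ul n,Y)$ is a clopen decomposition $T = \coprod_{n\le N} T_n$ together with, for each $n$, a finite étale $T_n$-scheme of rank $n$ mapping to $Y$. Passing to the filtered colimit over $N$, this is exactly a finite étale $T$-scheme (of arbitrary locally constant rank function bounded on $T$) with a map to $Y$, i.e.\ a $T$-point of $Q(\FEt,Y)$. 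So the comparison map is an equivalence.

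Now I combine this with Lemma \ref{lemm:stk-coprod} applied to $\scr X_n = Q(\Sigma_n,\ul n,Y)$: the lemma gives
\[ L_\Sigma \coprod_{n\ge 0}^{\PSh} Q(\Sigma_n,\ul n,Y) \wequi \colim_N \coprod_{n\le N}^\Stk Q(\Sigma_n,\ul n,Y) \wequi Q(\FEt, Y), \]
using the stack-coproduct identification just established. This proves the first equivalence.

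For the \emph{in particular} clause, I would specialize to $Y = S$. Since every $S$-scheme $T$ has a unique map to $S$, the map-to-$Y$ data is trivial: one has $Q(\Sigma_n,\ul n, S) \wequi [S/\Sigma_n] = B_\et \Sigma_n$ (this is Example \ref{ex:Bet-LKE}), and similarly $Q(\FEt, S)$ is the stack of finite étale schemes with no extra data, i.e.\ $\FEt^\wequi$. Substituting yields the displayed equivalence.

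The only delicate point is the stack-level coproduct decomposition of $Q(\FEt,Y)$: I need that the components of a finite étale $T$-scheme by locally constant rank give a genuine clopen decomposition of $T$, which is standard, and that finite étale schemes over $T$ always have bounded rank (which holds since the rank is a locally constant, $\Z_{\ge 0}$-valued function on the quasi-compact base of any test affine). Everything else is a formal application of Lemma \ref{lemm:stk-coprod}.
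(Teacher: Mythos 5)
Your proof is correct and follows essentially the same route as the paper: identify the stack-level coproduct $\coprod_n^{\Stk} Q(\Sigma_n,\ul n,Y)$ with $Q(\FEt,Y)$ via the locally constant (hence, on a quasi-compact test scheme, bounded) rank function, then apply Lemma \ref{lemm:stk-coprod} and Example \ref{ex:Q-weil-res}. The only quibble is the citation in the last step: Example \ref{ex:Bet-LKE} concerns $Q(G,*,*)$ rather than $Q(\Sigma_n,\ul n,S)$, but the identification $Q(\Sigma_n,\ul n,S)\wequi B_\et\Sigma_n$ you want is exactly the statement that $B_\et\Sigma_n$ is the stack of degree-$n$ finite étale schemes, which is what the paper invokes.
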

\begin{proof}
Immediate from Lemma \ref{lemm:stk-coprod} and Example \ref{ex:Q-weil-res}, using that $B_\et \Sigma_n$ is the stack of finite étale schemes of degree $n$, and that the degree of a finite étale scheme is locally constant.
\end{proof}

\begin{lemma} \NB{ref?} \label{lemm:LKE-sigma}
The left Kan extension functor \[ e_!: \PSh(\SmAff_S) \to \PSh(\Aff_S) \] preserves $\Sigma$-presheaves and $L_\Sigma$-equivalences.
\end{lemma}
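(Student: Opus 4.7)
The key observation is that $e: \SmAff_S \hookrightarrow \Aff_S$ preserves finite coproducts, because the disjoint union of smooth affine $S$-schemes is again smooth affine. From this single fact I would derive both claims using only formal properties of presheaf categories and $\Sigma$-localization.

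For the preservation of $L_\Sigma$-equivalences, I would argue as follows. By \cite[Lemma 2.4]{norms}, $L_\Sigma$-local equivalences in $\PSh(\SmAff_S)$ form the strongly saturated class generated by the coproduct-splitting maps
\[ y(X_1) \sqcup \cdots \sqcup y(X_n) \to y(X_1 \sqcup \cdots \sqcup X_n), \qquad X_i \in \SmAff_S. \]
Since $e_!$ is a left adjoint (in particular cocontinuous) and sends representables to representables, it maps such a morphism to
\[ y(eX_1) \sqcup \cdots \sqcup y(eX_n) \to y(e(X_1 \sqcup \cdots \sqcup X_n)) \wequi y(eX_1 \sqcup \cdots \sqcup eX_n), \]
using that $e$ preserves finite coproducts. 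But this is a generating $L_\Sigma$-equivalence in $\PSh(\Aff_S)$. Now let $W$ be the class of morphisms $f$ in $\PSh(\SmAff_S)$ with $e_!(f)$ an $L_\Sigma$-equivalence; since $e_!$ preserves colimits and $L_\Sigma$-equivalences in $\PSh(\Aff_S)$ are strongly saturated, $W$ is strongly saturated. Since $W$ contains the generators, it contains all $L_\Sigma$-equivalences.

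For the preservation of $\Sigma$-presheaves, I would use the description of $\PSh_\Sigma(\scr C)$ as the free sifted-cocompletion of $\scr C$ \cite[Proposition 5.5.8.15]{HTT}. Given $F \in \PSh_\Sigma(\SmAff_S)$, write $F \wequi \colim_I y(X_i)$ for some sifted diagram $I \to \SmAff_S$; because $I$ is sifted, this colimit agrees with the colimit computed in $\PSh(\SmAff_S)$. Applying $e_!$, which preserves colimits and representables, gives $e_! F \wequi \colim_I y(eX_i)$ in $\PSh(\Aff_S)$. Each $y(eX_i)$ lies in $\PSh_\Sigma(\Aff_S)$, and $\PSh_\Sigma(\Aff_S)$ is closed under sifted colimits inside $\PSh(\Aff_S)$ (since sifted colimits of spaces commute with finite products), so $e_! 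F \in \PSh_\Sigma(\Aff_S)$.

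Neither step is really an obstacle: the whole lemma is a formal consequence of $e$ preserving finite coproducts. The only subtle point worth stating carefully is the second one, where one must remember that sifted colimits in $\PSh_\Sigma$ and $\PSh$ agree, so that writing $F$ as a sifted colimit of representables is unambiguous. Alternatively, one could deduce preservation of $\Sigma$-presheaves from preservation of $L_\Sigma$-equivalences together with the fact that $e_! F \to L_\Sigma e_! F$ is an $L_\Sigma$-equivalence and equal to $e_! (F \to L_\Sigma F) = e_!(\id)$ for $F$ already $\Sigma$-local.
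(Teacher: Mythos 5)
Your proof is correct, and the two halves relate to the paper's proof differently. For the preservation of $L_\Sigma$-equivalences the paper simply writes ``clear''; your argument (generators of the strongly saturated class are the coproduct-splitting maps, $e_!$ is cocontinuous and carries representables to representables, $e$ preserves finite coproducts, and the preimage of a strongly saturated class under a cocontinuous functor is strongly saturated) is exactly the intended justification, correctly spelled out. For the preservation of $\Sigma$-presheaves you take a genuinely different route: the paper argues pointwise via the comma-category formula for $e_!$, using that $(\SmAff_S)_{X \amalg Y/} \wequi (\SmAff_S)_{X/} \times (\SmAff_S)_{Y/}$ and that the resulting colimits commute with finite products, whereas you argue globally, writing any $F \in \PSh_\Sigma(\SmAff_S)$ as a sifted colimit of representables (using that $\SmAff_S$ has finite coproducts, so $\PSh_\Sigma$ is the free sifted cocompletion) and using that $\PSh_\Sigma(\Aff_S)$ is closed under sifted colimits in $\PSh(\Aff_S)$. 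Your version has the advantage of outsourcing the siftedness verification to \cite[Lemma 5.5.8.13]{HTT} rather than to the indexing categories $(\SmAff_S)_{X/}$ (which, as stated in the paper, are really only sifted rather than filtered, since coequalizing parallel arrows would require non-smooth equalizers); the paper's version is more self-contained and makes visible where finite coproducts in $\SmAff_S$ enter.

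One caveat: the ``alternative'' you sketch in the last sentence does not work. Knowing that $e_!$ preserves $L_\Sigma$-equivalences gives a descended functor on localizations satisfying $L_\Sigma e_! \wequi \bar e_! L_\Sigma$, but it does not imply that $e_!$ carries $\Sigma$-local objects to $\Sigma$-local objects (that is a property of the \emph{right} adjoint in such situations), and identifying $e_! F \to L_\Sigma e_! F$ with $e_!$ of $F \to L_\Sigma F$ already presupposes the conclusion. Since this is offered only as an aside, it does not affect the validity of your main argument.
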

\begin{proof}
The claim about $L_\Sigma$-equivalences is clear.
The preservation of $\Sigma$-presheaves follows from the facts that for $X, Y \in \Aff_S$ we have $(\SmAff_S)_{X \amalg Y/} \wequi (\SmAff_S)_{X/} \times (\SmAff_S)_{X/}$, these categories are filtered, and filtered colimits of spaces commute with finite products.
\end{proof}

\begin{corollary} \label{corr:FEt-base-change}
Let $Y \in \Sm_S$.
Then $Q(\FEt, Y) \in \PSh(\Aff_S)$ is left Kan extended from smooth affines.
\end{corollary}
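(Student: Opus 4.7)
The plan is to combine Corollary~\ref{cor:stk-fet}, which identifies $Q(\FEt, Y)$ with $L_\Sigma \coprod_n Q(\Sigma_n, \ul{n}, Y)$, with Lemma~\ref{lemm:LKE-primitive}, which handles each summand, together with Lemma~\ref{lemm:LKE-sigma}, which controls the interaction of the left Kan extension $e_!$ with $L_\Sigma$. The structural facts I will rely on are that $e_!$ is a left adjoint (hence preserves coproducts), $e^*$ preserves coproducts of presheaves (they are computed sectionwise), and $e_!$ is fully faithful (so $e^* e_! \wequi \id$, and a presheaf on $\Aff_S$ is left Kan extended from smooth affines precisely when it lies in the essential image of $e_!$).

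First I would show that $F := \coprod_n Q(\Sigma_n, \ul{n}, Y)$ is left Kan extended from smooth affines. By Lemma~\ref{lemm:LKE-primitive} (applied with $G=\Sigma_n$, $X=\ul{n}$, using Example~\ref{ex:Q-weil-res} to identify the Weil restriction), each $Q(\Sigma_n, \ul{n}, Y)$ satisfies $e_! e^* Q(\Sigma_n, \ul{n}, Y) \wequi Q(\Sigma_n, \ul{n}, Y)$. Since both $e_!$ and $e^*$ preserve coproducts, one has
\[ e_! e^* F \wequi \coprod_n e_! e^* Q(\Sigma_n, \ul{n}, Y) \wequi \coprod_n Q(\Sigma_n, \ul{n}, Y) = F. \]

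Next I would upgrade this from $F$ to $L_\Sigma F \wequi Q(\FEt, Y)$. Writing $F \wequi e_! e^* F$ and applying $L_\Sigma$, I need to identify $L_\Sigma e_! e^* F$ with $e_! L_\Sigma^{\SmAff} e^* F$. For this, Lemma~\ref{lemm:LKE-sigma} says that the map $e_! e^* F \to e_! L_\Sigma^{\SmAff} e^* F$ (obtained by applying $e_!$ to the $L_\Sigma$-localization map on $\PSh(\SmAff_S)$) is an $L_\Sigma$-equivalence whose target is a $\Sigma$-presheaf; by the universal property of $L_\Sigma$, the map $L_\Sigma e_! e^* F \to e_! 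L_\Sigma^{\SmAff} e^* F$ is therefore an equivalence. Consequently $L_\Sigma F$ lies in the essential image of $e_!$, and since $e_!$ is fully faithful this forces $e_! e^* L_\Sigma F \wequi L_\Sigma F$. Combining with Corollary~\ref{cor:stk-fet} gives the corollary.

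The only delicate point is the commutation $L_\Sigma e_! \wequi e_! L_\Sigma^{\SmAff}$, but this is exactly packaged by Lemma~\ref{lemm:LKE-sigma}, so no serious obstacle is anticipated; the rest is formal bookkeeping.
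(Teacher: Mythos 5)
Your proposal is correct and follows exactly the paper's route: the paper's own proof is a one-line reduction via Lemma~\ref{lemm:LKE-sigma} and Corollary~\ref{cor:stk-fet} to the summands $Q(\Sigma_n,\ul{n},Y)$, which are handled by Lemma~\ref{lemm:LKE-primitive}. You have simply (and correctly) filled in the formal bookkeeping — coproduct preservation by $e_!$ and $e^*$, full faithfulness of $e_!$, and the commutation of $e_!$ with $L_\Sigma$ — that the paper leaves implicit.
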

\begin{proof}
Via Lemma \ref{lemm:LKE-sigma} and Corollary \ref{cor:stk-fet} we are reduced to the stacks $Q(\Sigma_n, \ul{n}, Y)$, which are treated in Lemma \ref{lemm:LKE-primitive}.
\end{proof}

We now treat the case where $S$ is not necessarily affine.
\begin{lemma} \label{lemm:LKE-trick}
Let $S$ be a scheme and $\scr X \in \PSh(\Sch_S)$.
Suppose $\scr X|_{\Aff_B}$ is left Kan extended from smooth affines, for all $B$ in some Zariski cover of $S$ by affine schemes.
Denote by \[ \tilde e_!: \PSh(\Sm_S) \adj \PSh(\Sch_S): \tilde{e}^* \] the left Kan extension adjunction.
Then the counit $\tilde e_! \tilde e^* \scr X \to \scr X$ is a Zariski equivalence.
\end{lemma}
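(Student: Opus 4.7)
The plan is to factor the adjunction $\tilde e_! \dashv \tilde e^*$ through $\Aff_S$ and compare with the analogous factorization through $\Sm_S$. Let $i: \SmAff_S \hookrightarrow \Sm_S$, $j: \SmAff_S \hookrightarrow \Aff_S$, $k := \tilde e: \Sm_S \hookrightarrow \Sch_S$ and $l: \Aff_S \hookrightarrow \Sch_S$, so that $k \circ i = l \circ j$ and hence $k_! i_! \simeq l_! j_!$ on presheaves. Writing $c_f$ for the counit of $f_! \dashv f^*$, naturality of counits produces a commutative diagram
\[
\begin{CD}
l_! j_! j^* l^* \scr X @>{l_!(c_j)}>> l_! l^* \scr X @>{c_l}>> \scr X \\
@| @. @| \\
k_! i_! i^* k^* \scr X @>{k_!(c_i)}>> k_! k^* \scr X @>{c_k}>> \scr X
\end{CD}
\]
both of whose composites represent the counit of $k_! i_! \dashv i^* k^*$ at $\scr X$. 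The strategy is to prove that $l_!(c_j)$, $k_!(c_i)$ and $c_l$ are all Zariski equivalences, which forces $c_k$ to be one as well by $2$-out-of-$3$.

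For any fully faithful inclusion $f: \scr C_0 \hookrightarrow \scr C$ where $\scr C_0$ forms a Zariski basis (in our case, $\Aff_S \subset \Sch_S$ and $\SmAff_S \subset \Sm_S$), the counit $f_! f^* F \to F$ is an honest equivalence at every object of $\scr C_0$---the slice $(\scr C_0)_{/U}$ has an initial object when $U \in \scr C_0$---and is therefore a Zariski equivalence. This shows that $c_l$ and $c_i$ are Zariski equivalences. Moreover, both $k_!$ and $l_!$ are left adjoints that carry \v Cech nerves of Zariski covers to \v Cech nerves of Zariski covers, and so preserve Zariski equivalences. In particular, $k_!(c_i)$ is a Zariski equivalence.

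The remaining step---showing that $c_j: j_! j^* l^* \scr X \to l^* \scr X$ is a Zariski equivalence in $\PSh(\Aff_S)$---is where the hypothesis enters, and is the main technical point. Evaluated at $T \in \Aff_S$, this map is
\[
\colim_{V \in (\SmAff_S)_{/T}} \scr X(V) \longrightarrow \scr X(T).
\]
Given $T \in \Aff_S$, pulling back the chosen Zariski cover $\{B_\beta\}$ of $S$ and further covering each preimage by affines yields a Zariski cover $\{T_\alpha \to T\}$ by affines, each equipped with a factorization through some $B_{\beta(\alpha)}$. Since $B_\beta \hookrightarrow S$ is an open immersion, we have $(\SmAff_S)_{/T_\alpha} = (\SmAff_{B_{\beta(\alpha)}})_{/T_\alpha}$, so the hypothesis yields $\colim_{V \in (\SmAff_S)_{/T_\alpha}} \scr X(V) \simeq \scr X(T_\alpha)$. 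Thus $c_j$ is an equivalence on each $T_\alpha$, hence a Zariski equivalence. Applying $l_!$, $l_!(c_j)$ is a Zariski equivalence, which completes the $2$-out-of-$3$ argument and shows that $c_k = \tilde e_! \tilde e^* \scr X \to \scr X$ is a Zariski equivalence.
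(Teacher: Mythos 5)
Your overall strategy---writing the inclusion $\SmAff_S\hookrightarrow\Sch_S$ as $k\circ i=l\circ j$, factoring the counit of the composite adjunction in two ways, and concluding by two-out-of-three---is sound, and the peripheral steps ($c_l$ and $c_i$ being equivalences on objects of the respective subcategories, hence Zariski equivalences, and $k_!,l_!$ preserving Zariski equivalences) are essentially correct. Note that the paper itself gives no argument here but defers entirely to \cite[Lemma 3.3.9]{EHKSY3}, so you are attempting to supply a proof where the paper gives a citation.

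The central step, however, contains a genuine error. The counit of $j_!\dashv j^*$ evaluated at $T\in\Aff_S$ is
\[
\colim_{(V,\ \phi\colon T\to V)}\ \scr X(V)\ \longrightarrow\ \scr X(T),
\]
a colimit over smooth affines $V$ equipped with a morphism \emph{from} $T$ (this is how, e.g., a $G$-torsor on $T$ is exhibited as pulled back from a smooth affine), not over the slice $(\SmAff_S)_{/T}$ of smooth affines mapping \emph{to} $T$ as you write; with your indexing the cocone maps $\scr X(V)\to\scr X(T)$ do not even exist, since $\scr X$ is contravariant. This is not a cosmetic slip, because the next move---the identification $(\SmAff_S)_{/T_\alpha}=(\SmAff_{B_{\beta(\alpha)}})_{/T_\alpha}$, which lets you quote the hypothesis verbatim---holds only for the incorrect slice indexing. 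With the correct indexing one must compare the colimit over smooth affine $S$-schemes under $T_\alpha$ with the colimit over smooth affine $B_\beta$-schemes under $T_\alpha$, and the inclusion of the latter index category into the former is \emph{not} cofinal: if $V$ is a smooth affine $S$-scheme whose structure map does not factor through $B_\beta$, there is no $S$-morphism from $V$ to any $B_\beta$-scheme whatsoever, so the relevant comma categories are empty. Bridging precisely this mismatch (for instance by factoring $\phi$ through the open subscheme $V\times_S B_\beta$, which is only quasi-affine, and then covering---which is exactly why the conclusion is a Zariski equivalence rather than an equivalence) is the actual content of the lemma and of \cite[Lemma 3.3.9]{EHKSY3}; your argument bypasses this point rather than proving it.
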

\begin{proof}
This is a minor variation of \cite[Lemma 3.3.9]{EHKSY3}, with essentially the same proof.
\end{proof}

\begin{lemma} \label{lemm:LKE-trick2}
Let $f: T \to S$ be a morphism of schemes and $\scr X \in \PSh(\Sch_S)$ such that both $\scr X$ and $\scr X|_{\Sch_T}$ are left Kan extended from their restrictions to smooth schemes up to $L_\Zar$ (e.g. satisfy the assumptions of Lemma \ref{lemm:LKE-trick}).
Then the canonical map \[ \alpha: f^*(\scr X|_{\Sm_S}) \to \scr X|_{\Sm_T} \in \PSh(\Sm_T) \] is a Zariski equivalence.
\end{lemma}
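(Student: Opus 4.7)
The plan is to derive the lemma directly from the hypothesis on $\scr X$ together with the observation that restriction functors between presheaf categories send Zariski-local equivalences to Zariski-local equivalences. The hypothesis on $\scr X|_{\Sch_T}$ will not actually be used in this argument.

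First I would verify that the composite restriction functor $r := \tilde e^{T*} \circ F^* : \PSh(\Sch_S) \to \PSh(\Sm_T)$, where $F: \Sch_T \to \Sch_S$ is composition with $f$ and $\tilde e^T: \Sm_T \hookrightarrow \Sch_T$, preserves Zariski-local equivalences. This holds because each of these restrictions has a right adjoint (right Kan extension) which sends Zariski sheaves to Zariski sheaves: for $F_*$ one has $(F_* \scr K)(X) = \scr K(X_T)$, and a Zariski cover $\{U_i \to X\}$ in $\Sch_S$ pulls back to a Zariski cover $\{(U_i)_T \to X_T\}$ in $\Sch_T$, verifying the sheaf condition. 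The analogous argument handles $\tilde e^{T*}$, using that Zariski opens in a smooth $T$-scheme are again smooth.

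Next, by hypothesis and Lemma \ref{lemm:LKE-trick} the counit $c: \tilde e_!^S (\scr X|_{\Sm_S}) \to \scr X$ is a Zariski equivalence in $\PSh(\Sch_S)$, so by the previous step $r(c)$ is a Zariski equivalence in $\PSh(\Sm_T)$. For $Y \in \Sm_T$, the source of $r(c)$ computes as
\[
r\bigl(\tilde e_!^S (\scr X|_{\Sm_S})\bigr)(Y) = \colim_{(X \in \Sm_S,\, Y \to X)} \scr X(X) = \colim_{(X \in \Sm_S,\, Y \to X_T)} \scr X(X) = f^*(\scr X|_{\Sm_S})(Y),
\]
where the middle identification uses the universal property $\Map_{\Sch_S}(Y, X) \simeq \Map_{\Sch_T}(Y, X_T)$ of the fiber product, and the target of $r(c)$ is $\scr X(Y) = \scr X|_{\Sm_T}(Y)$.

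Finally I would identify $r(c)$ with the canonical map $\alpha$. By adjunction, $\alpha$ is the unique cocontinuous map $f^*(\scr X|_{\Sm_S}) \to \scr X|_{\Sm_T}$ whose component at a pair $(X, Y \to X_T)$ equals the pullback $\scr X(X) \to \scr X(X_T) \to \scr X(Y)$ along $Y \to X_T \to X$, and $r(c)$ has the same components by construction. The main obstacle is purely bookkeeping in this last naturality comparison; tracking the various adjunctions carefully confirms $r(c) = \alpha$, completing the argument.
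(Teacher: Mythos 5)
Your argument is correct, but it takes a genuinely different route from the paper's. The paper's proof goes \emph{up}: it applies the fully faithful left Kan extension $g_!\colon \PSh(\Sm_T)\to\PSh(\Sch_T)$ to $\alpha$ and shows that both $g_!f^*(\scr X|_{\Sm_S})\wequi f^*g_!(\scr X|_{\Sm_S})$ and $g_!(\scr X|_{\Sm_T})$ are Zariski-equivalent to $\scr X|_{\Sch_T}$, which is exactly where the second hypothesis (on $\scr X|_{\Sch_T}$) enters. You instead go \emph{down}: you apply the composite restriction $r\colon\PSh(\Sch_S)\to\PSh(\Sm_T)$ to the counit $\tilde e^S_!(\scr X|_{\Sm_S})\to\scr X$ and identify the result with $\alpha$, so only the hypothesis on $\scr X$ over $S$ is used. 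The two approaches share the common input that base change and restriction to smooth schemes preserve Zariski equivalences (the paper needs this for $f^*$ and $g^*$ as well, and your justification via the right adjoints preserving Zariski sheaves is sound), and the identification $r(h_X)\wequi h_{X_T}$ together with cocontinuity of $r$ cleanly gives $r\circ\tilde e^S_!\wequi f^*$ on all of $\PSh(\Sm_S)$, which is the tidiest way to package your pointwise colimit computation and the final ``bookkeeping'' step. What your route buys is a mild strengthening — the lemma holds without assuming anything about $\scr X|_{\Sch_T}$ — at the cost of having to pin down explicitly that the canonical map $\alpha$ agrees with $r$ of the counit; what the paper's route buys is symmetry (both sides are compared to the same object $\scr X|_{\Sch_T}$) and direct reuse of the adjunction $g_!\dashv g^*$ already set up for Lemma \ref{lemm:LKE-trick}.
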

\begin{proof}
Since in the adjunction \[ g_!: \PSh(\Sm_T) \adj \PSh(\Sch_T): g^* \] the functor $g_!$ is fully faithful and both functors preserve Zariski equivalences, it suffices to prove that $g_!(\alpha)$ is an equivalence.
Compatibility of left Kan extensions with composition and our assumptions imply that both $g_! f^*(\scr X|_{\Sm_S}) \wequi f^* g_!(\scr X|_{\Sm_S})$ and $g_!(\scr X|_{\Sm_T})$ are Zariski equivalent to $\scr X|_{\Sch_T}$, whence the result.
\end{proof}

\begin{corollary} \label{corr:Bet-stable-by-base-change}
Let $f: T \to S$ be a morphism of schemes, and $G$ a finite étale $S$-group scheme.
There is a canonical morphism $f^* B_\et G \to B_\et f^* G \in \PSh(\Sm_T)$, which is a Zariski-equivalence.
\end{corollary}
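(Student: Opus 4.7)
The plan is to apply Lemma \ref{lemm:LKE-trick2} to the stack $\scr X := B_\et G \in \PSh(\Sch_S)$ classifying $G$-torsors on $S$-schemes. The canonical morphism will come from the identification of $\scr X|_{\Sch_T}$ with $B_\et f^*G$: base change along $\Sch_T \to \Sch_S$ (sending $X \to T$ to the composite $X \to T \to S$) sends $X$ to the groupoid of $G$-torsors on $X$, which is naturally the groupoid of $(G \times_S T)$-torsors on $X$ since $X$ is a $T$-scheme. Under this identification, the map $\alpha$ supplied by Lemma \ref{lemm:LKE-trick2} is precisely the desired morphism $f^* B_\et G \to B_\et f^*G \in \PSh(\Sm_T)$.

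To apply Lemma \ref{lemm:LKE-trick2}, I need to check that both $\scr X = B_\et G$ and $\scr X|_{\Sch_T} \simeq B_\et f^*G$ are left Kan extended from smooth schemes up to $L_\Zar$. By Lemma \ref{lemm:LKE-trick}, it suffices to verify that their restrictions to $\Aff_B$ are left Kan extended from smooth affines, for $B$ running through some affine Zariski cover of $S$ (resp. $T$). For such an affine $B \subset S$, the restriction $\scr X|_{\Aff_B}$ is canonically $B_\et(G \times_S B)$, where $G \times_S B$ is a finite étale $B$-group scheme; this is left Kan extended from smooth affines by Example \ref{ex:Bet-LKE}. The same argument applies to $B_\et f^*G$ by covering $T$ by affines.

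Having verified the hypotheses, Lemma \ref{lemm:LKE-trick2} directly yields that $\alpha$ is a Zariski equivalence, completing the proof. The only possible obstacle is bookkeeping: ensuring that the various restriction and base change functors compose correctly and that the identification $\scr X|_{\Sch_T} \simeq B_\et f^*G$ is natural enough to match the canonical map one would otherwise write by hand. All serious technical content is already packaged inside Example \ref{ex:Bet-LKE} (ultimately resting on \cite[Proposition A.0.4]{EHKSY3}) and Lemmas \ref{lemm:LKE-trick} and \ref{lemm:LKE-trick2}.
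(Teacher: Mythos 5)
Your proposal is correct and matches the paper's (one-line) proof, which is exactly "combine Lemmas \ref{lemm:LKE-trick}, \ref{lemm:LKE-trick2} and Example \ref{ex:Bet-LKE}"; you have simply unpacked that combination, including the correct identification of $B_\et G|_{\Sch_T}$ with $B_\et f^*G$ and the reduction to affine Zariski covers.
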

\begin{proof}
Combine Lemmas \ref{lemm:LKE-trick}, \ref{lemm:LKE-trick2} and Example \ref{ex:Bet-LKE}.
\end{proof}

\section{Recollections on equivariant motivic homotopy theory} \label{app:equiv-mot}
In this section, $G$ is always a finite group, thought of as a finite discrete group scheme over $S$.
We review the construction and basic properties of $G$-equivariant motivic homotopy theory.
Our main reference is \cite{bachmann-MGM}, which builds on \cite{gepner-heller}, \cite{khan2021generalized} and \cite{Hoyois:6functor}.

\subsection{Stacks and $G$-schemes} \label{subsec:stacks-G-schemes}
If $S$ is a scheme with a $G$-action, we can form the stacky quotient $S \sslash G = [S/G]$.\footnote{The notation $S \sslash G$ is more common in homotopy theory and used in \cite{bachmann-MGM}, whereas $[S/G]$ is used in algebraic geometry.}
If $B$ is a base scheme with trivial $G$-action, then the functor \[ \Sch_B^G \to \Stk_{/B \sslash G}, X \mapsto X \sslash G \] is fully faithful (where $\Stk$ denotes the $(2,1)$-category of algebraic stacks).
In this way the language of stacks can replace the language of $G$-schemes, and this is indeed done in our main reference.

Let $\scr X$ be a stack.
Denote by \[ \Sm_{\scr X}^{\mathrm{aff}} \subset \Sm_{\scr X}^{\mathrm{qaff}} \subset \Sm_{\scr X}^{\qproj} \subset \Sm_{\scr X}^{\mathrm{sch}} \subset \Sm_{\scr X}^{\mathrm{repr}} \subset \Stk_{/\scr X} \] the full subcategories on those stacks $\scr Y$ smooth over $\scr X$ such that whenever $A$ is an affine scheme and $A \to \scr X$ is any morphism, then $\scr Y \times_{\scr X} A$ is respectively an affine scheme, a quasi-affine scheme, a quasi-projective $A$-scheme, a scheme, or an algebraic space.
Motivic homotopy theory of $\scr X$ will be built with one of these categories as the starting point.
Depending on the choice, one may obtain theories with more or less favorable properties.
At least if $\scr X = S \sslash G$ (and $X$ separated in case of $\Sm_S^\mathrm{aff}$), then all of these choices lead to the same theory \cite[Propositions 2.11 and 2.12]{bachmann-MGM} \cite[\S A.3.4]{khan2021generalized} \cite[Remark 3.3]{Hoyois:6functor}.

In the sequel, if $\scr X = S \sslash G$, we denote by $\Sm_{\scr X}$ any of the above categories, noting that whatever construction we are performing ultimately does not depend on the choice.
If $\scr X$ is not of this form, one should take $\Sm_{\scr X} := \Sm_{\scr X}^\mathrm{qaff}$ to be consistent with \cite{bachmann-MGM}.

\subsection{Basic notions}
Let $S$ be a scheme with a $G$-action.
Put $\Sm_S^G := \Sm_{S \sslash G}$.
Recall that a \emph{family of subgroups} $\scr F$ of $G$ is a set of subgroups which is closed under conjugation and passage to subgroups.
\begin{example}
The families that are of greatest interest in this paper are the extreme ones: the family $\scr F_\all$ of all subgroups of $G$, and the family $\scr F_\triv$ consisting of only the trivial subgroup $\{e\}$.
\end{example}
Write $\Sm_S^G[\scr F] = \Sm_{S \sslash G}[\scr F]$ for the subcategory on those schemes with isotropy in $\scr F$ \cite[Definition 4.19]{bachmann-MGM}.
One puts \[ \Spc^G(S)[\scr F] = \Spc(S \sslash G)[\scr F] = L_\mot \PSh(\Sm_{S \sslash G}[\scr F]). \]
The canonical functor \[ \Spc^G(S)[\scr F] \to \Spc^G(S) \] is fully faithful with essential image generated under colimits by $\Sm_S^G[\scr F]$ \cite[Corollary 4.29]{bachmann-MGM}.

We have $T^G_B = \A^G_B/\A^G_B \setminus 0 \in \Spc^G(B)_*$, and we obtain $T^G = T^G_S \in \Spc^G(S)_*$ by base change.
We define \[ \SH^G(S) = \SH(S \sslash G) = \Spc^G(S)_*[(T^G)^{-1}] \] and denote by \[ \SH^G(S)[\scr F] \subset \SH^G(S) \] the subcategory generated under colimits by $(T^G)^{\wedge n} \wedge \Sigma^\infty_+(\Spc^G(S)[\scr F])$, for $n \in \Z$.

\begin{warning}
While we can make this definition for any $S$ and $G$, the resulting category is most well-behaved if $G$ is \emph{tame}, i.e. $|G|$ is invertible on $S$.
Indeed this assumption is needed for most non-trivial results of \cite{Hoyois:6functor}.
\end{warning}

\subsection{Models for colocalizations at $\scr F$}

\begin{definition} \label{def:eg} Let $\scr F$ be a family of subgroups of $G$. Then the \emph{geometric universal $\scr F$-space over $G$} is the presheaf on $\Sm^G_S$ defined by
\[
\EE \scr F(T) = \begin{cases}
        \emptyset, & T \not\in \Sm_S^G[\scr F] \text{ (i.e. there exists $x \in T$ such that $\stab(x) \not\in \scr F$)}\\
        * & T \in \Sm_S^G[\scr F].
        \end{cases}
\]
In particular we write $\EE G$ for $\EE \scr F_{\triv}$
\end{definition}

We have the following proposition extracted from \cite{gepner-heller} or \cite{bachmann-MGM}.
\begin{proposition}\todo{remove (3) and (4)?} \label{prop:coloc-smash} \label{prop:coloc} Suppose that is $\scr F$ a family.
\begin{enumerate}
\item Consider the adjunction induced by the inclusion $\scr F \subset \scr F_{\all}$
\[
u_!: \Spc^G(S)[\scr F] \rightleftarrows \Spc^G(S): u^*.
\]
There is canonical equivalence of endofunctors
\[
u_!u^*(-) \simeq \EE \scr F \times -.
\]
\item Consider the stabilized adjunction
\[
u_!: \SH^G(S)[\scr F] \rightleftarrows \SH^G(S): u^*.
\]
There is canonical equivalence of endofunctors
\[
u_!u^* \simeq \EE \scr F_+ \wedge -.
\]
\item The presheaf $\EE \scr F$ is motivic local.
\item The presheaf $\EE \scr F$ is represented by an ind-smooth $G$-scheme. 
\end{enumerate}
\end{proposition}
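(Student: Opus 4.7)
The plan is to treat (3) and (4) as preliminary structural facts that feed into the main content (1), and then deduce (2) from (1) by stabilization.

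For (4), I would use the Morel--Voevodsky--Totaro-style construction: pick a faithful representation $V$ of $G$ over $S$ and set $U_n(\scr F) \subseteq V^{\oplus n}$ to be the open locus of points whose stabilizer lies in $\scr F$. Each $U_n(\scr F) \in \Sm^G_S[\scr F]$, the complements have codimension going to infinity, and the colimit $\colim_n U_n(\scr F)$ represents the presheaf $\EE\scr F$: indeed $\Map(T, U_n(\scr F))$ is non-empty for $n$ large whenever $T$ has isotropy in $\scr F$ (lift any morphism $T \to V^{\oplus n}$ and use that the bad locus can be avoided by a generic perturbation, formalized via high-codimension vanishing), and is empty otherwise. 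This essentially repeats \cite[\S4.2]{A1-homotopy-theory} and \cite[\S3.1]{gepner-heller}. Claim (3) then follows: $\A^1$-invariance holds because $T \times \A^1$ has the same stabilizers as $T$, and Nisnevich descent follows from (4) combined with the fact that filtered colimits of Nisnevich-local presheaves in $\Sm^G_S$ remain Nisnevich-local.

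For (1), the key observation is that $u_!$ is fully faithful with essential image closed under colimits, so $u_!u^*$ is the right-adjoint colocalization of $\Spc^G(S)$ onto $\Spc^G(S)[\scr F]$. I therefore need to check that the projection $\EE\scr F \times X \to X$ realizes this colocalization, namely:
\begin{enumerate}
\item[(a)] $\EE\scr F \times X \in \Spc^G(S)[\scr F]$ for every $X \in \Spc^G(S)$;
\item[(b)] $\Map(Y, \EE\scr F \times X) \to \Map(Y, X)$ is an equivalence for every $Y \in \Spc^G(S)[\scr F]$.
\end{enumerate}
Claim (b) reduces to $\Map(Y, \EE\scr F) \simeq *$; since $\EE\scr F$ is motivic-local by (3), this reduces by the colimit description of $\Spc^G(S)[\scr F]$ to the representable case $Y = T \in \Sm^G_S[\scr F]$, where $\Map(T, \EE\scr F) = \EE\scr F(T) = *$ by definition. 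For claim (a), I would use the ind-scheme model $\EE\scr F \simeq \colim_n U_n(\scr F)$ from (4) and the fact that products commute with colimits in the Nisnevich $\infty$-topos, reducing to $X = T \in \Sm^G_S$ representable; then $U_n(\scr F) \times T$ maps to $U_n(\scr F)$ and therefore has isotropy contained in that of $U_n(\scr F)$, which lies in $\scr F$.

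Part (2) follows by stabilizing the same argument: $u_!: \SH^G(S)[\scr F] \hookrightarrow \SH^G(S)$ is again fully faithful with image closed under colimits, so $u_!u^*$ is the colocalization, and $\EE\scr F_+ \wedge -$ preserves the subcategory generated by $(T^G)^{\wedge n} \wedge \Sigma^\infty_+ \Sm^G_S[\scr F]$ by the stabilized version of (a). The colocalization property follows from (b) after $\Sigma^\infty_+$, using that $\Sigma^\infty_+(\EE\scr F \times X) \simeq \EE\scr F_+ \wedge \Sigma^\infty_+ X$. The main obstacle is verification of the ind-scheme model in (4) and checking that products with $\EE\scr F$ genuinely commute with motivic localization in the sense needed for (a); however, both points are essentially recorded in \cite{gepner-heller} and \cite{bachmann-MGM}, so the proof is largely a matter of assembly.
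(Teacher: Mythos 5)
The paper does not actually prove this proposition; its ``proof'' is a pointer to \cite[Proposition 3.3, Example 3.5, Proposition 3.7]{gepner-heller} for (3)--(4) and \cite[Lemmas 4.30 and A.4]{bachmann-MGM} for (1)--(2). Your proposal reconstructs essentially those arguments: the Morel--Voevodsky/Totaro geometric model for (4), the colocalization characterization of $u_!u^*$ for (1), and stabilization for (2). The architecture is right, and the key points --- full faithfulness of $u_!$, the reduction of (b) to $\Map(T,\EE\scr F)\simeq\ast$ for representables, and the stabilizer estimate $\stab((u,t))\subseteq\stab(u)$ for (a) --- are exactly the ones needed.

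Two steps need repair. First, your deduction of Nisnevich descent in (3) from (4) fails as stated: the presheaf $\EE\scr F$ of Definition \ref{def:eg} is not equal to $\colim_n U_n(\scr F)$ as a presheaf (the latter has $\colim_n\Hom(T,U_n(\scr F))$ as sections over $T$, a large discrete set, not $\ast$); the two are only motivically, in fact $\A^1$-locally, equivalent, and that equivalence is precisely the content of (4). So Nisnevich locality of the ind-scheme cannot be transported to $\EE\scr F$. The correct (and easier) argument is direct: whether all isotropy of $T$ lies in $\scr F$ is detected on any equivariant Nisnevich cover, so the $\emptyset/\ast$-valued presheaf satisfies descent by inspection. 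This matters for the logic of (1) as well, since your step (b) invokes (3). Second, in (4) ``non-empty for $n$ large'' is not the right condition: to identify $\colim_n U_n(\scr F)$ with $\EE\scr F$ you must show the colimit of mapping spaces becomes \emph{contractible} after $\A^1$-localization for $T\in\Sm^G_S[\scr F]$ (any two sections become $\A^1$-homotopic via a linear homotopy avoiding the high-codimension bad locus, and similarly for higher homotopies); mere non-emptiness would not even show that $\colim_n U_n(\scr F)\to\EE\scr F$ is an equivalence. Finally, in (2) the phrase ``follows from (b) after $\Sigma^\infty_+$'' glosses over the fact that $\Sigma^\infty_+$ does not preserve mapping spaces; what you actually need is that $\EE\scr F_+\wedge Y\to Y$ is an equivalence on the generators $(T^G)^{\wedge n}\wedge\Sigma^\infty_+T$ with $T\in\Sm^G_S[\scr F]$ (which does follow from the unstable equivalence $\EE\scr F\times T\simeq T$), together with the idempotency $\EE\scr F_+\wedge\EE\scr F_+\simeq\EE\scr F_+$, giving a smashing colocalization. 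All of these are standard fixes, but as written the descent step is the one that genuinely breaks.
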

\begin{proof}
For (1) and (2) see \cite[Lemmas 4.30 and A.4]{bachmann-MGM}.
For (3) and (4) see \cite[Proposition 3.3, Example 3.5, Proposition 3.7]{gepner-heller}.
\end{proof}

\subsection{Quotients} \label{subsec:quotients}
Let $S$ be a $G$-scheme.
A quotient of $S$ by $G$ is an initial $G$-scheme under $S$ with trivial $G$-action.
Quotients are not guaranteed to exist, but this is so if the action is free and $S$ is quasi-projective \cite[Tag 07S7]{stacks}; equivalently the stack $S \sslash G$ is in fact a scheme.
When working with quotients, we thus let $\Sm^G_S$ denote $\Sm_{S \sslash G}^{\mathrm{qaff}}$ or $\Sm_{S \sslash G}^{\qproj}$.\footnote{Or we could work with $\Sm_{S \sslash G}^{\mathrm{repr}}$ and ask for algebraic space quotients.}
Still assuming that $S$ has free $G$-action, the morphism $S \to S/G$ is finite étale (since it is a $G$-torsor and $G$ is finite discrete, whence finite étale), and hence $S/G$ is smooth over some base $B$ if $S$ is.
All in all we see that there is a functor \[ (\ph)/G: \Sm_B^G[\scr F_\triv] \to \Sm_B \] which is in fact a partial left adjoint to \[ \triv: \Sm_B \to \Sm_B^G. \]

\begin{proposition} \label{prop:quot} Let $B$ have the trivial $G$-action.
There are colimit-preserving functors \[ (\ph)/G:\quad \SH^G(B)[\scr F_{\triv}] \rightarrow \SH(B) \quad\text{and}\quad \Spc^G(B)[\scr F_{\triv}] \rightarrow \Spc(B) \] such that the following hold.
\begin{enumerate}
\item The following diagram commutes
\[
\begin{tikzcd}
\Sm_{B\sslash G}^{\qproj}[\scr F_{\triv}] \ar{r}{(\ph)/G} \ar{d} & \Sm_B \ar{d} \\
\Spc^G(B)[\scr F_\triv] \ar{r}{(\ph)/G} \ar[swap,"\Sigma^{\infty}_+"]{d} & \Spc(B) \ar{d}{\Sigma^{\infty}_+} \\
\SH^G(B)[\scr F_{\triv}] \ar{r}{(-)/G} & \SH(B)
\end{tikzcd}
\]
\item The right adjoint to $(-)/G$ is given by $\SH(B) \xrightarrow{\triv} \SH^G(B) \xrightarrow{\wedge \EE G_+} \SH^G(B)[\scr F_{\triv}]$, and similarly for $\Spc(\ph)$.
\item The functors $(\ph)/G$ are oplax symmetric monoidal.
\end{enumerate}
\end{proposition}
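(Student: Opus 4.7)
The plan is to construct $(\ph)/G$ in both the unstable and stable settings as a left adjoint to an explicit right adjoint, and deduce the three listed properties formally. Define
\[
R_\Spc: \Spc(B) \to \Spc^G(B)[\scr F_\triv], \quad X \mapsto \EE G \times \triv(X),
\]
and analogously $R_\SH(E) = \EE G_+ \wedge \triv(E)$, where $\triv$ endows an object with the trivial $G$-action and the output lies in the $\scr F_\triv$-colocalization by Proposition \ref{prop:coloc}. Each such $R$ is accessible (both $\triv$ and smashing by a fixed object are accessible) and preserves all limits, so admits a left adjoint by the adjoint functor theorem; this left adjoint is taken to be $(\ph)/G$. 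It is colimit-preserving by construction, and property (2) holds on the nose.

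For the upper square of (1), I would argue by adjunction: given $X \in \Sm_B^G[\scr F_\triv]$ and $Y \in \Sm_B$, the scheme-theoretic quotient $X/G$ exists (\cite[Tag 07S7]{stacks}: free finite group actions on quasi-projective schemes admit geometric quotients) and represents $G$-equivariant morphisms into trivial-action targets. Hence
\[
\Map_{\Spc(B)}\bigl((\ph)/G(X), Y\bigr) \simeq \Map_{\Spc^G(B)[\scr F_\triv]}(X, \EE G \times \triv Y) \simeq \Map_{\Spc^G(B)}(X, \triv Y) \simeq \Map_{\Sm_B}(X/G, Y),
\]
where the middle equivalence uses that $X$ already lies in the $\EE G \times (\ph)$-colocalization. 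By the Yoneda lemma, $(\ph)/G(X) \simeq X/G$ in $\Spc(B)$. For the lower square, both composites $\Sigma^\infty_+ \circ (\ph)/G_\Spc$ and $(\ph)/G_\SH \circ \Sigma^\infty_+$ are cocontinuous, and by adjunction they agree once one knows that their right adjoints are naturally equivalent, i.e.\ that $\Omega^\infty R_\SH \simeq R_\Spc \Omega^\infty$; this reduces to the commutation of $\Omega^\infty$ with $\triv$ (built into the construction of $\triv_\SH$ from $\triv_\Spc$) and with smashing by $\EE G_+$ on $\Sigma^\infty_+$-images.

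For property (3), the right adjoint $R$ is canonically lax symmetric monoidal. Indeed, $\triv$ is strong symmetric monoidal (it is induced from a symmetric monoidal functor at the level of schemes), while smashing by $\EE G_+$ is lax symmetric monoidal because $\EE G$ is terminal in $\Spc^G(B)[\scr F_\triv]$ by Proposition \ref{prop:coloc} and thus inherits a canonical commutative coalgebra structure from its diagonal. Passing to left adjoints, $(\ph)/G$ is canonically oplax symmetric monoidal. The main technical obstacle I anticipate lies in the lower square of (1), namely the verification that $\Sigma^\infty_+$ intertwines the unstable and stable versions of the right adjoint $R$; this is essentially bookkeeping given the smashing colocalization structure on $\EE G$, but it requires care to set up the monoidal compatibility cleanly within the framework of \cite{bachmann-MGM, gepner-heller}.
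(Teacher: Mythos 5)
The paper's own ``proof'' of (1) and (2) is a citation to \cite[Corollary 4.42]{bachmann-MGM}, and for (3) it invokes exactly the doctrinal-adjunction argument you use (left adjoint of a symmetric monoidal functor is oplax monoidal); so your overall architecture --- define $(\ph)/G$ as the left adjoint of the explicit right adjoint, get (2) for free, identify it on representables, and deduce (3) formally --- is a reasonable reconstruction of the outsourced content. Two steps, however, do not work as written. First, in the upper square of (1) your Yoneda argument only tests against representable $Y \in \Sm_B$; representables generate $\Spc(B)$ under colimits but do not cogenerate it, so naturality of $\Map(-,Y)$ for such $Y$ alone does not identify $(\ph)/G(X)$ with $X/G$. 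To run Yoneda you need $\Map_{\Spc^G(B)}(X,\triv Y)\simeq \Map_{\Spc(B)}(X/G,Y)$ for \emph{all} $Y\in\Spc(B)$, and for general motivic-local $Y$ this is no longer just the scheme-level universal property of the geometric quotient: it is precisely the assertion that $X\mapsto X/G$ on free quasi-projective $G$-schemes is compatible with the Nisnevich and $\A^1$-localizations (e.g.\ that it preserves Nisnevich squares and that $(X\times\A^1)/G\simeq (X/G)\times\A^1$). That compatibility is the actual content of the cited corollary and is missing from your argument.

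Second, your treatment of the lower square is backwards. Comparing the right adjoints means showing $\EE G\times\triv(\Omega^\infty E)\simeq \Omega^\infty(\EE G_+\wedge\triv E)$ for \emph{every} $E\in\SH(B)$; restricting ``to $\Sigma^\infty_+$-images'' is not available when comparing right adjoints, and $\Omega^\infty$ does not commute with $\wedge\,\EE G_+$ by any naive bookkeeping. The efficient argument goes the other way: both composites $\Sigma^\infty_+\circ(\ph)/G$ and $(\ph)/G\circ\Sigma^\infty_+$ are colimit-preserving, so it suffices to check they agree on the generators $X\in\Sm^{\qproj}_{B\sslash G}[\scr F_\triv]$, where both give $\Sigma^\infty_+(X/G)$ by the (already established) upper square. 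Finally, a small directional slip in (3): a cocommutative coalgebra structure on $\EE G_+$ makes $\EE G_+\wedge(\ph)$ \emph{oplax}, not lax, monoidal; what saves you is that $\EE G_+$ is an \emph{idempotent} coalgebra, so the structure maps are invertible and the colocalization functor $u^*$ is in fact strong symmetric monoidal --- which is exactly the hypothesis under which the paper applies \cite[Theorem 4.5]{rune-lax}.
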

\begin{proof} 
(1, 2) See \cite[Corollary 4.42]{bachmann-MGM}.

(3) Consequence of being left adjoint to a symmetric monoidal functor \cite[Theorem 4.5]{rune-lax}.
\end{proof}

\begin{lemma} \label{lemm:triv-quot-projection-formula}
For $E \in \SH(S)$ and $F \in \SH^G(S)$ we have $(\EE G_+ \wedge E^\triv \wedge F)/G \wequi E \wedge (\EE G_+ \wedge F)/G$.
\end{lemma}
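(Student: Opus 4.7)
The content of the lemma is a projection formula for the geometric homotopy orbits functor $(-)_{\hh G}\colon \SH^G(S) \to \SH(S)$, reflecting the fact that $(-)_{\hh G}$ is a module functor over $\SH(S)$, where $\SH^G(S)$ is made into an $\SH(S)$-module via the symmetric monoidal functor $\triv$. My plan is to construct the canonical comparison map via the adjunction of Proposition \ref{prop:quot}(2), and then check that it is an equivalence by reducing to generators.

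Write $\triv_\hh(E) := \triv(E) \wedge \EE G_+$ for the right adjoint of $(\ph)/G$. Since $\triv: \SH(S) \to \SH^G(S)$ is symmetric monoidal (it is pullback along $S \sslash G \to S$), there is a canonical equivalence $\triv(E) \wedge \triv_\hh(E') \wequi \triv_\hh(E \wedge E')$ for $E,E' \in \SH(S)$. Smashing the unit map $\EE G_+ \wedge F \to \triv_\hh((\EE G_+ \wedge F)/G)$ with $E^\triv$ produces
\[
\EE G_+ \wedge E^\triv \wedge F \to \triv_\hh\bigl(E \wedge (\EE G_+ \wedge F)/G\bigr),
\]
which, after transposing across the adjunction $(\ph)/G \dashv \triv_\hh$, yields the comparison map $(\EE G_+ \wedge E^\triv \wedge F)/G \to E \wedge (\EE G_+ \wedge F)/G$.

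To prove this is an equivalence, I would observe that both sides are cocontinuous in each of $E$ and $F$ separately (since $(\ph)/G$, $\triv$ and $\wedge$ all preserve colimits), so it suffices to check the case where $E = \Sigma^{p,q}\Sigma^\infty_+ Y_+$ with $Y \in \Sm_S$ and $F = \Sigma^{p',q'}\Sigma^\infty_+ X_+$ with $X \in \Sm_S^G$. By Proposition \ref{prop:coloc}(4), $\EE G$ is represented by an ind-smooth $G$-scheme $\colim_n E_n$ whose $G$-action is free at each finite stage, so $\EE G_+ \wedge \Sigma^\infty_+ X_+ \wequi \colim_n \Sigma^\infty_+ (E_n \times X)_+$, where $E_n \times X$ carries the free diagonal $G$-action. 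This reduces the question to a statement about smash products of suspension spectra of smooth $G$-schemes with free action.

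On that level, if $Y$ has trivial $G$-action and $Z$ has free $G$-action, then $Y \times Z$ has free $G$-action and the projection $Y \times Z \to Y \times (Z/G)$ exhibits $Y \times (Z/G)$ as the quotient, giving $(Y \times Z)/G \wequi Y \times (Z/G)$. Combining this with the commutative diagram from Proposition \ref{prop:quot}(1), both sides of the comparison identify with $\Sigma^\infty_+ Y_+ \wedge \colim_n \Sigma^\infty_+((E_n \times X)/G)_+$, concluding the proof. The main bookkeeping obstacle is verifying that the map constructed abstractly from adjunction units coincides on generators with the one induced by the scheme-level isomorphism; this amounts to unravelling the unit of $(\ph)/G \dashv \triv_\hh$ on free $G$-schemes, where it is given by the identity after applying Proposition \ref{prop:quot}(1).
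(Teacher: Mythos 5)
Your proposal is correct and follows essentially the same route as the paper: reduce by cocontinuity to suspension spectra of smooth schemes and invoke the scheme-level identity $(Y \times Z)/G \wequi Y \times (Z/G)$ for $Y$ with trivial and $Z$ with free action (the paper takes its generator for the second variable directly in $\Sm_{S\sslash G}[\scr F_\triv]$, whereas you pass through the ind-smooth model of $\EE G$, but this is the same reduction). The extra care you take in constructing the comparison map from the unit of $(\ph)/G \dashv \triv_\hh$ and matching it against the geometric isomorphism is detail the paper leaves implicit, not a different argument.
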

\begin{proof}
It suffices to treat the case where $E = \Sigma^\infty_+ X$, $F = \Sigma^\infty_+ Y$ with $X \in \Sm_S$, $Y \in \Sm_{S \sslash G}[F_\triv]$.
For this we must show that \[ (X \times Y)/G \wequi X \times (Y/G), \] which is clear.
\end{proof}

\begin{definition} \label{def:gho}
We call the composite functor \[ (\ph)_{\hh G}: \SH^G(S) \xrightarrow{\wedge \EE G_+} \SH^G(S)[\scr F_{\triv}] \xrightarrow{(\ph)/G} \SH(S) \] the \emph{geometric homotopy orbits} functor.
\end{definition}

\end{subappendices}

\bibliographystyle{amsalpha}
\bibliography{powerops}

\end{document}